 \received{\dots}{\dots}
\numberwithin{equation}{section} 
\theoremstyle{plain}
\newtheorem{theorem}{Theorem}[section]
\newtheorem{lemma}[theorem]{Lemma}
\newtheorem{proposition}[theorem]{Proposition}
\newtheorem{corollary}[theorem]{Corollary}
\theoremstyle{definition}
\newcommand{\class}[1]{\boldsymbol{\mathfrak{#1}}}
\newcommand{\cat}[1]{\boldsymbol{\mathscr{#1}}}
\newcommand{\str}[1]{\mathbf{#1}}
\newcommand{\fnt}[1]{\mathsf{#1}}
\newcommand{\ope}[1]{\mathbb{#1}}
\newcommand{\defn}[1]{{\emph{#1}}}
\newcommand{\CA}{\cat A}
\newcommand{\CX}{\cat X}
\newcommand{\CP}{\cat P}
\newcommand{\CZ}{\cat Z}
\newcommand{\CCD}{\cat D}
\newcommand{\LMn}{\cat{LM}_n}
\newcommand{\Mn}{\cat{M}_n}
\newcommand{\CM}{\class M}
\newcommand{\CN}{\class N}  
\newcommand{\CMT}{\twiddle {\mathfrak M}}
\newcommand{\X}{\str X}
\newcommand{\Y}{\str Y}
\newcommand{\Z}{\str Z}
\newcommand{\M}{\str M}
\newcommand{\A}{\str A}
\newcommand{\B}{\str B}
\newcommand{\C}{\str C}
\newcommand{\Free}{\str{F}\!} 
\newcommand{\fntH}{\fnt{H}}
\newcommand{\Uca}{\fnt{U}_{\CA}}
\newcommand{\two}{\boldsymbol 2}
\newcommand{\twoT}{\two_{\Tp}}
\newcommand{\Cong}{\text{Con}}
\newcommand{\Tp}{\mathscr{T}}
\newcommand{\ISP}{\ope{ISP}}
\newcommand{\DP}{${\CCD\text{-}\CP\,}$}
\renewcommand{\leq}{\leqslant}
\renewcommand{\geq}{\geqslant} 
\renewcommand{\preceq}{\preccurlyeq} 
\newcommand{\w}{\omega}
\newcommand{\chisub}[1]%
{\hbox{\raise.4ex\hbox{$\chi_{\hbox{}_{\scriptstyle 
#1}}$}}\!}
\newcommand{\twiddle}[1]{{\smash{\underset{\raise.375ex\hbox{$\smash\sim$}}
       {\mathbf{#1}}}\vphantom{\underline{\mathbf{#1}}}}} 
\newcommand{\stwiddle}[1]{\smash{\underset{\smash{\raise.1ex\hbox{\small$\sim$}}}
                         {\mathbf{#1}}}\vphantom{#1}}
 \DeclareMathOperator{\IScP}{{\mathbb{IS} _{\mathrm{c}}%
 \mathbb{P}^+}}
\newcommand{\MT}{\twiddle M}
\newcommand{\dotbigcup}{\overset{\raise.6ex\hbox{$\smash\cdot$}\,}{\smash\bigcup\,}}
\newcommand{\eysy}{\hbox{$\text{\rm E}^{\text{\checkmark}}\text{\rm S}
^{\text{\checkmark}}$}}
\newcommand{\eysn}{\hbox{$\text{\rm E}^{\checkmark}\text{\rm S}^{\times}$}}
\newcommand{\ensy}{\hbox{$\text{\rm E}^{\times}\text{\rm S}^{\checkmark}$}}
\newcommand{\ensn}{\hbox{$\text{\rm E}^\times\text{\rm S}^\times$}}
\newenvironment{newlist}
   {\begin{list}{}{\setlength{\labelsep}{0.25cm}
                   \setlength{\labelwidth}{0.65cm}
                      \setlength{\leftmargin}{0.9cm}}}
   {\end{list}}
\newcommand{\bigdu}{\smash{\overset{\smash{\lower 1ex\hbox{$\cdot$}}}{\bigcup}\, }}
\newcommand{\powerset}[1]{{\raise.5ex\hbox{$\wp$}#1}} 
\newcommand{\dotbigcupdisp}{\mbox{\Large{$\textstyle\overset{\raise.65ex\hbox{$\smash\cdot$}}{\smash\bigcup}$}}}
\begin{document}
\title{Coproducts of distributive lattice-based algebras}
\author{L. M. Cabrer}
\email{lmcabrer@yahoo.com.ar}
\address{Mathematical Institute, 
University of Oxford\\
Radcliffe Observatory Quarter\\Oxford OX2 6GG \\UK}

\author{H. A. Priestley}
\email{hap@maths.ox.ac.uk}
\address{Mathematical Institute,
University of Oxford\\
Radcliffe Observatory Quarter \\
Oxford OX2 6GG \\UK}

\subjclass[2010]{Primary: 06D50;
 Secondary: 08C20, 
08B25, 
18A35, 
08C15, 
06DXX 
}
\keywords{
 Coproducts, distributive lattices, ordered algebras, functors preserving coproducts, natural duality, piggyback duality}

\begin{abstract}
This paper presents a systematic study of coproducts.  This is carried out 
principally, but not exclusively, for
finitely generated quasivarieties $\CA$ that admit a  (term) reduct in the variety $\CCD$ of bounded distributive 
lattices.   In this setting 
we present necessary and sufficient conditions on~$\CA$ for the forgetful functor $\fnt U_{\CA}$
from~$\CA$ to~$\CCD$
 to preserve coproducts. 
 We  also investigate the possible behaviours 
 of~$\fnt U_{\CA}$
as regards coproducts in~$\CA$ under weaker assumptions. 
Depending on the properties exhibited by the functor, different procedures 
are then available for describing these coproducts.
We classify a selection of well-known varieties within our scheme,
thereby unifying earlier results and obtaining some new ones.  

The paper's methodology draws heavily on duality theory.   
We use Priestley duality as a tool and our 
descriptions of coproducts are given in terms of  this duality.
We also  exploit natural duality theory, specifically
multisorted piggyback dualities, in our  analysis of  the behaviour of 
the 
forgetful functor into $\CCD$.
 In the opposite direction,  we reveal that
the type of natural duality that the class $\CA$ can possess is governed by properties of coproducts in $\CA$ and the way in which the classes $\CA$ and 
$\fnt U_{\CA}(\CA)$  interact.
\end{abstract}

\maketitle


\section{Introduction} \label{Sec_Intro}  
Coproducts in varieties of algebras have been extensively studied and 
a multitude of papers related to  this topic can be found in the literature: 
\cite{Qu72,CF77,CF79,Ci79,BeGr,Pa,DM}, and more. 
Often
the characterisation of coproducts has been motivated by  interest in  
amalgamation properties, determination of free objects,
axiomatisations, or 
colimits more widely.
Whatever the motivation, the analysis of coproducts has generally been variety-specific, 
relying  on  tools tailored
 to particular classes of algebras.  A recurring theme, however, is 
the use of a categorical duality.
The main objective of this paper is to  give 
 a  uniform
 treatment, based on duality theory,
of coproducts in classes of algebras that admit a bounded distributive lattice reduct.  Within this setting,  we contribute new theoretical results and 
thereby provide a  unified perspective on characterisations (most already known, a few new) of coproducts  in particular classes.

Let $\CCD$ denote the category  whose objects are  bounded distributive lattices and whose morphisms are the lattice homomorphisms which preserve the bounds.
We will say that a class of algebras $\CA$ with language~$\mathcal{L}$ is a \defn{$\CCD$-based class} if the algebras of 
$\CA$ have
reducts in~$\CCD$.
Specifically,  this is the case if
there exists a subset $\{\wedge,\vee,0,1\}$ of $\mathcal{L}$ such that, for each algebra $\A\in \CA$,  the reduct $(A,\wedge^{\A},\vee^{\A},1^{\A},0^{\A})$ is in~$\CCD$. 
 (More generally, it suffices to assume that the class $\CA$ is term-equivalent to a class which has this property.) 
Many familiar classes $\CA$  of algebras arising as models
for propositional logics, for example, are $\CCD$-based.

 We recall that 
 it is well known that coproducts  in $\CCD$ have  a simple description via Priestley duality (see Section~\ref{Sec_Prel}). 
 In this paper we shall study coproducts in  $\CCD$-based categories  of algebras by relating them to  coproducts in $\CCD$.
To that end, we first observe that if $\CA$ is a $\CCD$-based class of algebras then
 the assignment $\A\mapsto (A,\wedge^{\A},\vee^{\A},0^{\A},1^{\A})$ and $h\mapsto h$ determines a (forgetful) functor~$\Uca$  from~$\CA$ into~$\CCD$.
We will concentrate on the problem of determining 
when $\Uca$ 
{preserves coproducts}. 
We give the precise (categorical) definition of this property  in Section~\ref{Sec_Cop} (see also \cite[Section V.4]{McL1969}).  We note here that
 it implies, but is stronger than, the 
 requirement that 
$\Uca(\coprod_{\CA}(\class{K}))\cong\coprod_{\CCD}\Uca(\class{K})$, 
 for each set $\class{K}\subseteq \CA$.
Henceforth, we shall omit the subscript from the forgetful functor~$\fnt U$  when working with some fixed class  $\CA$.

We shall, generally but not exclusively, restrict attention to the situation in which 
$\CA$  is a \defn{finitely generated quasivariety}, that is, there exists a finite set of finite algebras
$\CM \subseteq \CA$ such that $\CA = \ISP(\CM)$.  
Such a set $\CM$ exists whenever $\CA$ is a $\CCD$-based variety 
expressible as $ \ope{HSP}(\CN)$, where $\CN$ is a finite set of finite algebras.
   This is a consequence of J\'onsson's Lemma:  we have  
$\CA = \ISP(\CM)$, where  $\CM\subseteq \ope{HS}(\CN)$ 
 (see \cite[Corollaries~6.9 and~6.10]{BuSa}).  
Working with quasivarieties 
certainly  
ensures the existence of coproducts for each set of algebras in the class.

 The principal result of this paper is Theorem \ref{MainTheo} (Coproduct Preservation Theorem).   Given $\CA$ and $\fnt U$ as above 
we  show that, for any set $\class{K}$ in $\CA$,  there is a natural  $\CCD$-homomorphism
$
 \chisub{\class{K}}\colon \coprod\fnt U(\class{K})\to \fnt U(\coprod\class{K})$ (see Fig.\ref{fig:coprod}(b)). 
  Theorem~\ref{MainTheo} gives necessary
and sufficient conditions for $\chisub{\class{K}}$ to be an isomorphism
for any set $\class{K}$.  
It is obtained 
by combining two results, Theorems~\ref{Theo:EmbeddingCoprod} and~\ref{Theo:OntoCop},  the first giving conditions for each map  $\chisub{\class{K}}$
to be surjective  (property (S))    and the second conditions for  each map $\chisub{\class{K}}$  to be an embedding   (property (E)). 
Theorem~\ref{MainTheo} then gives two sets of conditions, each of which is equivalent to the satisfaction of both (E) and (S).
The first set  can be viewed  as  specifying a particular, and very special,    
form of interaction  between $\CA$ and the subclass $\fnt U (\CA)$ of
$\CCD$. 
There is required   
to be an  algebra $\M$ such that 
$\CA = \ISP(\M)$ and  a $\CCD$-homomorphism~$\w$ from~$\fnt U (\M)$ into the two-element algebra $\two $ in $\CCD$ 
with two properties, which we now introduce. The first
 is a separation condition, $\text{(Sep)}_{\M,\w}$:
for $a \ne b$ in $\M$, there is $u \in \CA(\M,\M)$ such that 
$\w(u(a)) \ne \w(u(b))$.  The second property  demands  
that
for each algebra $\A\in\CA$ and each sublattice $\str{L}$ of $\fnt U(\A)$ there exists a subalgebra of $\A$ maximal for the inclusion order among the subalgebras of $\A$ contained in $\str{L}$. 
Interestingly, this property has a syntactic counterpart, 
to the effect that an
 arbitrary term in the language~$\mathcal{L}$ of $\CA$ is  equivalent in $\CA$ to  a term built from unary terms in  the language~$\mathcal{L}$
and terms in the language of $\CCD$ 
(see Theorems~\ref{Theo:EmbeddingCoprod} and~\ref{Theo:GeneralS}).
  The second, alternative, set of
conditions in Theorem~\ref{MainTheo} is included principally to allow us to demonstrate that whether or not coproducts are preserved is a decidable problem; see the discussion following the theorem.

Theorems~\ref{Theo:EmbeddingCoprod} and~\ref{Theo:OntoCop} are
 of independent interest.
They allow us  
 to  analyse  the behaviour of~$\fnt U$ under weaker conditions than the ones presented in the preceding paragraph (see the flowchart  in Fig.~\ref{flowchart}).   
Therefore 
even
when the functor $\fnt U$ fails to 
commute with coproducts we may 
be able to 
describe 
coproducts.
Table~\ref{Tab:ProcedureByType}   summarises 
the strategies we  have available for 
doing this.

In Section~\ref{Sec_Prel} we set out
the duality theory on which we shall rely.  In particular  we set up, briefly and in a self-contained way,  the framework 
for the Multisorted Piggyback Duality Theorem (stated as Theorem~\ref{genpig}).   
We conclude Section~\ref{Sec_Prel}  with Theorem~\ref{Theo:RevEng}.  This allows us to relate
the dual structures supplied by Theorem~\ref{genpig} for the algebras in a 
finitely generated $\CCD$-based  quasivariety to the Priestley dual spaces of 
their $\CCD$-reducts. 
 We then have in place  the machinery we need to 
investigate coproducts via duality theory. 
 Theorem~\ref{Theo:RevEng} 
is new, though what it tells us about
dualities for particular quasivarieties was already known in certain 
cases.  
Section~\ref{Sec_Cop}---the core
of the paper---presents the statements and proofs of our main 
results, as outlined above.  
Anyone  conversant with the piggybacking method \cite{DWpig,DP87,CD98} will have recognised our first set of  conditions for preservation of coproducts as statements relating to  piggyback dualities. 
Indeed, the classification of classes $\CA$ according to whether property  (S) and/or property (E) holds or fails governs the form a  piggyback duality for $\CA$
will take.   Moreover, via 
property (E),  we are able  easily to  prove that 
$\CA$ admits free products 
if and only if  $\CA$ is  generated as a  quasivariety by a 
single finite algebra (this specialises a result known to hold
more generally).
Details are given in Theorem~\ref{Theo:FreeProd} and Theorem~\ref{Theo:CoproToNatDual}.

 In Section~\ref{Sec:CopSub} we venture beyond the confines 
of finitely generated $\CCD$-based quasivarieties, with the aim of 
revealing how far certain results in Section~\ref{Sec_Cop} hold in
greater generality.
 Finally, in Section~\ref{Sec_Ex},  we apply 
our results and techniques 
to particular well-known classes of finitely generated varieties.   Primarily our catalogue unifies pre-existing descriptions  of coproducts, both as regards the descriptions 
 and as regards the methodology 
for obtaining them. 

A few comments should be made on our  assumption 
that the quasivarieties we consider  be finitely generated. 
  This comes into play 
 to 
guarantee that we can set up the dualities we shall require (see Section~\ref{Sec_Prel}).  
In self-justification, we note 
that many interesting quasivarieties are 
finitely generated.  Moreover,   in any class $\CA$ of algebras which is locally finite
 (that is, finitely generated algebras are finite),  
any  quasivariety generated by a  finitely generated free object in $\CA$ is finitely generated. This simple observation demonstrates that our results
 have interesting consequences 
beyond the finitely generated setting (see Section~\ref{Sec:CopSub}).

\section{Preliminaries: duality theories}\label{Sec_Prel}

Take, as above, $\CA$ to be 
a $\CCD$-based and finitely generated quasivariety.
In this section we outline the results  on which our analysis of coproducts in~$\CA$ will rest.   
Underlying our strategy throughout will be duality theory, in two forms.  
Our main results are obtained by operating with these two forms in tandem,  and toggling between them.  
First we briefly discuss the role played 
 by Priestley duality 
as a platform on top of which dualities for classes of $\CCD$-based 
algebras can be built.    For many such classes, 
this technique gives a  valuable, set-based,  representation theory.  However,  although 
isolated results exist in the literature  for particular classes,  such representations
do not lend themselves well in general to the description of coproducts. 
Secondly, we venture a little way into the theory of  (multisorted) natural dualities as it applies to finitely generated $\CCD$-based quasivarieties, 
since such dualities, in common with Priestley duality itself, have good categorical properties.  The key theorem on which we shall rely
is  the Multisorted Piggyback Duality Theorem.
We 
present the bare minimum of the theory  
necessary to state it (Theorem~\ref{genpig}).  It allows us immediate access to coproducts,  via dual structures which are (multisorted) cartesian products.   
This would be of
little assistance  were it not possible  directly 
 to retrieve the coproduct, or at least the  Priestley dual space of its $\CCD$-reduct,  from the  dual structure. 
Theorem \ref{Theo:RevEng} provides exactly the  translation tool we need. 
(We remark that the usefulness  of Theorem~\ref{Theo:RevEng} potentially extends
 well
  beyond the applications to coproducts  given in this paper.)

Priestley duality for (bounded) distributive lattices establishes 
 a dual equivalence between the category~$\CCD$ and the category~$\CP$ of Priestley spaces, that is, compact totally order-disconnected spaces with  continuous order-preserving maps between such spaces as the morphisms. 
We shall below assume familiarity with Priestley duality (summaries can be found in,
for example, \cite[Chapter~11]{ILO2} and \cite{GLTDP}) but do
indicate here how 
   the duality  is set within its rightful categorical framework.
We  recall that $\CCD = \ISP(\two)$, 
 the class of isomorphic copies of subalgebras of products of 
$\two$;
and 
$\CP = \IScP(\twoT)$, the class of isomorphic copies of closed substructures of powers of $\twoT$, where $\twoT$ denotes the $2$-element   
Priestley space 
$(\{0,1\}, \leq, \Tp)$, in which $0 < 1$ and $\Tp$ is the discrete topology.
The duality between $\CCD$ and~$\CP$ is set up by the natural hom-functors, $\fnt H = \CCD(-,\two)$ and $\fnt K = 
\CP (-,\twoT)$. The accounts in  
\cite{GLTDP} and, in    
 \cite[Chapter~5]{CD98} highlight
the highly amenable properties this duality possesses, and why.  
We note two key facts.
Firstly, 
products in the category~$\CP$ are concrete (that is, cartesian) products.  This implies that~$\fnt H$ maps coproducts to cartesian products. 
Secondly, in the terminology of \cite{CD98}, the duality is strong. 
Consequently,  
for a $\CCD$-morphism~$f$, the dual map 
 $\fnt H(f)$
is surjective if and only if~$f$ is injective and that
$\fnt H(f)$ is an embedding in~$\CP$ if and only if~$f$
 is surjective.

Now assume that $\CA$ is a variety or quasivariety of $\CCD$-based 
algebras and assume that  we wish to find a category $\CZ$ of structured spaces 
we can use to represent the algebras and morphisms of $\CA$ in terms of~$\CZ$.  One  way  to try to proceed 
is first to take the class $\fnt U(\CA)$ of $\CCD$-reducts of members of 
$\CA$ and  to equip the associated class $\fnt{HU}(\CA)$
of Priestley spaces 
with additional (relational or functional) structure to enable each 
$\fnt{KHU}(\A)$ to be made into an algebra in~$\CA$
isomorphic to $\A$; this gives the objects of $\CZ$.  Then an associated  
subclass of morphisms needs to be identified to serve as the 
$\CZ$-morphisms.  
When an equivalence between $\CA$ and $\CZ$ is 
constructed in this
manner 
we shall say we have a \DP-based 
duality between $\CA$ and $\CZ$.   
Dualities of this type occur widely in the literature, for many different varieties of $\CCD$-based algebras, not necessarily finitely generated, and have close affinities 
with the discrete dualities associated with Kripke-style relational semantics for various propositional logics. 
 We recall some preliminary  examples 
 to highlight some salient  points.  
The examples generalise Boolean algebras in different ways:
we consider a unary  operation modelling a non-classical negation 
and a binary operation modelling intuitionistic 
implication.

Let 
$\cat{DM}$ be the variety of De Morgan algebras (see for example \cite{BaDw,CF77}). 
The dual category $\cat{Z}_{\cat{DM}}$, of De Morgan spaces, consists of
the
Priestley spaces~$\Z$ carrying 
 a  continuous order-reversing involution, $g\colon \Z \to \Z$.  The lattice $\fnt K(\Z)$ 
obtained from a  De Morgan space $\Z$ carries a De Morgan negation given by
$(\neg a)(z) = 1 $ if and only if $a(g(z)) = 0$.   
The De Morgan space morphisms are the continuous order-preserving maps commuting with~$g$.  
The variety $\cat{K}$ of Kleene algebras is a subvariety of  $\cat{DM}$ and the associated dual category $\cat{Z}_{\cat{K}}$, of Kleene spaces, consists of those 
De Morgan spaces  $(Z, \leq,g,\Tp)$ such that 
for each $z \in Z$ we have $z \leq g(z) $ and/or $g(z) \leq z$ \cite{CF79}.   The varieties~$\cat{DM} $ and $\cat{K}$, besides being 
 of interest to logicians, 
have been influential in the development of the theory of coproducts
 originating 
 in \cite{CF77, CF79}, 
and of the theory we develop below.

For a contrasting example we consider   Heyting algebras.  This time 
the dual category consists of the so-called 
Esakia spaces, namely Priestley spaces~$\Z$ with the~property that, for each open set~$U$, the down-set  ${\downarrow}U$ generated by~$U$ is open.  With~$\fnt K(\Z)$ identified with the clopen up-sets of $\Z$, the Heyting implication is given by 
$a \to b = Z \setminus  {\downarrow}(a \setminus b)$.
A morphism of Esakia spaces is a continuous order-preserving map commuting with the 
${\downarrow}$ operator.  
(See 
for example 
\cite{DP96} for details.)

The very different forms taken by the dualities for De Morgan and Kleene algebras  and for Heyting algebras  signal  that dualities built on top of Priestley duality may be very diverse, and suggest that 
discovering them is more of an art than a science.   (Indeed, we are dealing here with a topic closely akin to 
correspondence theory, as the term is used in modal logic, 
and so should not expect a simple, all-embracing, method to be available for setting up dualities for $\CCD$-based algebras in general.) 
Already in the examples  above we can see 
 a 
 dichotomy emerging.  In the case of $\cat{DM}$ and $\cat{K}$,  the additional structure needed to capture negation is given by extra structure on the underlying Priestley spaces,  in the guise of the operation~$g$, whereas for Heyting algebras, the implication is uniquely determined by the order relation which is already present.  Thus in some cases coproducts in a class $\CA$ of $\CCD$-based algebras will be 
determined by 
their underlying lattices; in other cases additional work will be needed to describe coproducts completely.  

Our second observation is even more important.
 Given a dual equivalence 
between a $\CCD$-based category of algebras  $\CA$ and a 
$\CP$-based category $\CZ$, we have no guarantee that 
the cartesian product of $\CZ$-objects will be 
a {$\CZ$-object} (a classic example is provided by $\CA=\cat{K}$); 
 and even when this problem does not arise 
the projection maps might  not be $\CZ$-morphisms. 
  Therefore we contend that 
\DP-based 
dualities may be useful in analysing coproducts, but it 
cannot always
be expected that the product in the dual category will be as simple as a cartesian product.

We now turn our attention to natural duality theory and to multisorted piggyback dualities in particular.  
We  outline, in black box fashion, the framework we need in order to state the Multisorted Piggyback  Duality Theorem in the situation in which we employ it.  The theory is presented in full generality in 
\cite{DP87,CD98}.  
 We remark in passing  that Kleene algebras supplied the motivation and the trail-blazer example for 
the development of dualities employing multisorted dual structures
\cite{DP87}.

 We fix  until further notice a quasivariety  $\CA = \ISP(\CM)$ of $\CCD$-based algebras,
 where~$\CM$ is a finite set of finite algebras. 
The dual category $\CX$ is built using 
an \defn{alter ego}~$\CMT$ 
for~$\CM$; this  will be a multisorted topological structure.   
  We let the underlying set of~$\CMT$ 
be the (disjointified) union  $\dotbigcup {\CM}$ of the sets~$M$, for $\M\in \CM$, and its topology $\Tp$ be  the 
disjoint union topology obtained by equipping each set~$M$ 
with the discrete topology.  
 In addition, 
$\dotbigcup \CM$ is equipped with a set~$R$ of 
binary relations and a set~$G$ of unary maps between the members of~$\CM$.  
 Here we impose the restriction that  the relations in $R$  
are taken to be  \defn{algebraic}, meaning that 
each $r \in R$ is a subalgebra of $\M_1 \times \M_2$ 
 for some $\M_1, \M_2 \in \CM$ and that each member of~$G$ is a                                   
homomorphism from $\M_1$ to $\M_2$, for some $\M_1,\M_2$.

We must now describe how~$\CMT$, as above, is used to generate a category~$\CX$  
of  $\CM$-sorted topological structures.  
Every element~$\X$ of~$\CX$ will be a union 
$\dotbigcup_{\M \in \CM}\, X_{\M}$, 
where each sort $X_{\M}$ carries a topology and the union is equipped with the disjoint union topology.  
Moreover, $\X$ will be  equipped with a set of relations~$R^{\X}$ and operations~$G^\X$ indexed  by the members of $R$ and $G$. Obviously~$\CMT$  is of this type.
A \defn{morphism} from~$\X$ to~$\Y$, where ${\X, \Y \in\CX}$,  is a map which preserves the sorts (that is, it maps~$X_{\M}$ into~$Y_{\M}$ for each~$\M \in \CM$) and 
is  continuous and structure-preserving. 
 For a non-empty set~$S$, the $S\text{-fold}$ power of $\X\in \CX$
has underlying set $\dotbigcup_{\M \in \CM}\,  (X_{\M})^S$, with each $(X_{\M})^S$ equipped with the product topology;
 the relational structure is lifted pointwise in the obvious manner.  
    The candidate dual category $\CX:= \IScP(\CMT)$ for~$\CA$ 
then has as objects the $\CM$-sorted topological structures in~$\CX$ 
which are isomorphic copies of closed substructures 
of powers of~$\CMT$ and  the morphisms are
as described above; the empty structure is included. 

Finally in this preamble we specify the natural hom-functors, $\fnt D \colon \CA \to \CX$ and 
$\fnt E \colon \CX\to \CA$ based on $\CM$ and $\CMT$. 
  We define
\[
\fnt{D}(\A ) = 
\dotbigcupdisp 
\{\, \CA(\A,\M) 
 \mid \M \in \CM\,\}; 
\]
the sorts of $\X:=\fnt{D}(\A)$ are the hom-sets $X_{\M}:= \CA(\A,\M)$,
for $\M \in \CM$, and $\CA(\A,\M)$ is a closed subspace of the power~$M^A$, where $M$ is equipped with the discrete topology.  Each $r \in R$ is lifted pointwise to a relation 
$r^\X$, and $R^\X := \{\, r^\X \mid r \in R\,\}$, and 
$G^\X$ is defined likewise.
 Then $\fnt{D}(\A )$ belongs to~$\CX$.
Now consider any $\X \in \CX$.  A map from~$\X$ to~$\CMT$ 
which preserves the sorts can be seen as defining an element of the product 
\[
\textstyle\prod \{ \, M^{X_{\M}}  \mid \M \in \CM \, \}
\]
and the $\M$th  factor of this product can be structured pointwise from~$\M$ to give an element of~$\CA$.
Hence we can define   
\[
\fnt{E}(\X) = \CX  (\X,\CMT),
\] 
with~$\fnt{E}$ regarded as mapping~$\CX$ into~$\CA$.
We define~$\fnt D$ and~$\fnt E$ to act on morphisms by composition in the obvious way.
It is  then straightforward to verify that~$\fnt D$ and~$\fnt E$ are functors setting  up a dual adjunction between~$\CA$ and~$\CX$.

For each $\A \in \CA$ we have a \defn{multisorted evaluation map} 
$e_{\A} \colon \A \to \fnt{ED}(\A)$ given by
\[
e_{\A}(a)(\M)(x) = x(a)  \qquad \text{for } a \in A, \ x \in \CA(\A,\M).
\]
The map~$e_{\A}$ can then be shown to be  an embedding.  
We say that the alter ego~$\CMT$ of~$\CM$ \defn{yields a {\upshape(}multisorted\,{\upshape)} duality on}~$\CA$ if, for each 
  $\A \in \CA$, 
 the map 
~$e_{\A}$ is an isomorphism. 
In these circumstances, any algebra~$\A$ in~$\CA$, 
and in particular the coproduct of a family of algebras in~$\CA$,
 is determined up to isomorphism by its dual $\fnt{D}(\A)$, 
and can be recaptured from its dual as a family of continuous multisorted structure-preserving maps. 
An important special case concerns free algebras:
the dual $\fnt{D}({\Free}_{\CA}(S)) $ of the free algebra in~$\CA$ on a set~$S$ of free generators is the structure~$\CMT^S$ \cite[Lemma~2.2.1]{CD98}.  
(For the purposes of this paper we need only that $\fnt D$ and $\fnt E$
set up a duality between $\CA$ and
the subcategory 
$\fnt D(\CA)$ of $\CX =\IScP(\CMT)$, and not the
stronger requirement  that they set up a dual equivalence between $\CA$ and  $\CX $. For this reason we do not need to include partial operations in our alter ego $\CMT$.)

 The preceding  introduction does not  tell us how to choose the structure of 
the alter ego so as to make each evaluation map $e_{\A}$ 
an isomorphism.  
 The 
piggybacking technique  supplies an answer.  (The term  `piggyback'
is used because the proof 
 relies on the 
known 
dual equivalence between 
$\CCD $ and $\CP$
to set up a suitable alter ego.)  
The piggybacking theorem, essentially as we give it below,
originated  in \cite[Section~2]{DP87} and is
reproduced as \cite[Theorem~7.2.1]{CD98}.
Before stating the theorem we introduce 
notation to describe the relations we shall use. 
 For $\M_i \in \CM$   
and each 
$\w_i \in \CCD(\fnt U(\M_i),\two)$ ($i = 1,2$), we have 
a bounded sublattice 
  \[
(\w_1,\w_2)^{-1}(\leqslant) :=
\{\,  (a,b) \in \M_1 \times \M_2\mid \w_1 (a) \leq \w_2 (b)\,\} . 
\]
of~$\M_1 \times \M_2$.  
It
will contain a (possibly empty) set~$R_{\w_1,\w_2}$ of algebraic relations 
 which are maximal with respect to the property of being contained in  
$(\omega_1,\omega_2)^{-1}(\leqslant)$.   
 
\begin{theorem} \label{genpig} {\rm(Multisorted Piggyback Duality Theorem, for 
$\CCD$-based algebras)} 
 Let  
$\CA  = \ISP (\CM)$, where~$\CM$ is a finite set of finite $\CCD$-based 
algebras of common type. 
For each $\M \in \CM$  let~$\Omega _{\M}$ be a {\rm(}possibly
empty{\rm)} subset of $  
\CCD(\fnt U(\M), \two)$.  
Let 
\[
\CMT   = \langle \,
\dotbigcupdisp 
_{\M \in \CM}\,  M ; 
R, G, \Tp\, \rangle 
\] 
 be the topological  structure
in which 
\begin{newlist}
\item[{\rm(i)}] $\Tp$ is the discrete topology;
\item[{\rm(ii)}] 
\begin{newlist}
\item[{\rm(a)}] $R$ is  the 
union, for $\w_i  \in \Omega _{\M_i}$ 
{\upshape (}$i \in \{1,2\}${\upshape)} of the sets $R_{\w_1,\w_2}$, as  defined above,
and  
\item[{\rm(b)}] $G
= 
\bigcup \{\, \CA (\M_1,\M_2)\mid \M_1, \M_2 \in \CM\,\}$.
\end{newlist}
\end{newlist}
Let $\Omega=\bigcup_{\M\in\CM}\Omega_{\M}$ and assume that 
the sets~$\Omega_{\M}$ 
 have been chosen so that 
the following separation condition is satisfied:
\begin{newlist}
\item[$\text{\rm (Sep)}_{\CM,\Omega}$:] for all $\M\in \CM$, given 
$a,b\in \M$  with $a \ne b$, there exist 
$\M' \in \CM$, \newline \hspace*{1cm} $u\in \CA (\M, \M') $ 
 and 
$\w   \in \Omega _{\M'}$  such that 
$\w  (u(a))  \ne \w (u(b))$. 
\end{newlist}
Then~$\CMT$ yields a multisorted duality on~$\CA$.
\end{theorem}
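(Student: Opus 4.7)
The plan is to prove that $e_\A$ is an isomorphism for each $\A \in \CA$; since the exposition already records that $e_\A$ is an embedding, only surjectivity needs attention. Fix $\alpha \in \CX(\fnt{D}(\A),\CMT)$ with components $\alpha_\M \colon \CA(\A,\M) \to M$; the task is to produce $a \in A$ with $\alpha_\M(x) = x(a)$ for every $\M \in \CM$ and $x \in \CA(\A,\M)$. The strategy is to piggyback on Priestley duality for the reduct $\fnt{U}(\A)$: I will construct a clopen up-set of the Priestley space $\fnt{HU}(\A)$ whose dual element $a \in A$ does the job.

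The core of the argument is the following \emph{Piggyback Lemma}: for all $\M_i \in \CM$, $\w_i \in \Omega_{\M_i}$ and $x_i \in \CA(\A,\M_i)$ ($i=1,2$), if $\w_1 \circ x_1 \leq \w_2 \circ x_2$ in $\fnt{HU}(\A)$, then $\w_1(\alpha_{\M_1}(x_1)) \leq \w_2(\alpha_{\M_2}(x_2))$. To prove it I would observe that the hypothesis forces the image of the homomorphism $a \mapsto (x_1(a),x_2(a)) \colon \A \to \M_1 \times \M_2$ to lie in $(\w_1,\w_2)^{-1}(\leq)$; being a subalgebra of $\M_1 \times \M_2$, Zorn's Lemma places it inside some maximal such subalgebra $r \in R_{\w_1,\w_2} \subseteq R$. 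Hence $(x_1,x_2)$ belongs to the pointwise lift $r^{\fnt{D}(\A)}$, and preservation of $r$ by $\alpha$ places $(\alpha_{\M_1}(x_1), \alpha_{\M_2}(x_2))$ in $r \subseteq (\w_1,\w_2)^{-1}(\leq)$, giving the desired inequality.

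Applying the lemma with $\leq$ in both directions makes the rule $t \colon \w \circ x \mapsto \w(\alpha_\M(x))$ a well-defined, order-preserving map from the subset $S := \{\, \w \circ x \mid \M \in \CM,\ \w \in \Omega_\M,\ x \in \CA(\A,\M)\,\}$ of $\fnt{HU}(\A)$ into $\{0,1\}$. Since each $\CA(\A,\M)$ is compact and $\Omega$ is finite, $t^{-1}(1)$ and $t^{-1}(0)$ are disjoint closed subsets, and order-preservation keeps $\uparrow t^{-1}(1)$ and $\downarrow t^{-1}(0)$ disjoint; the Priestley separation theorem then yields a clopen up-set $V$ with $\uparrow t^{-1}(1) \subseteq V$ and $V \cap \downarrow t^{-1}(0) = \emptyset$, which by Priestley duality corresponds to an element $a \in A$ satisfying $\w(x(a)) = \w(\alpha_\M(x))$ for every $\w \in \Omega_\M$ and $x \in \CA(\A,\M)$. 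To upgrade this to $\alpha_\M(x) = x(a)$ in $M$, I would invoke $\text{(Sep)}_{\CM,\Omega}$: if the equality failed, the condition would supply $u \in \CA(\M,\M')$ and $\w' \in \Omega_{\M'}$ with $\w'(u(x(a))) \neq \w'(u(\alpha_\M(x)))$, but preservation of $u \in G$ by $\alpha$ rewrites the right side as $\w'(\alpha_{\M'}(u \circ x))$, which the previous equation applied to $\w'$ and $u \circ x$ equates with $\w'((u \circ x)(a)) = \w'(u(x(a)))$, a contradiction. The main obstacle is the Piggyback Lemma: pinning down precisely which algebraic relations $\alpha$ must preserve so that the inequality lifts is exactly what dictates the definition of $R$ as the union of the maximal sets $R_{\w_1,\w_2}$, and the Zorn-plus-preservation step there is the conceptual heart of the theorem.
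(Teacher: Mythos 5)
Your proposal is correct, and it is essentially the classical proof of this theorem: the paper itself does not prove Theorem~\ref{genpig} but cites it from Davey--Priestley \cite{DP87} and \cite[Theorem~7.2.1]{CD98}, where the argument runs exactly as you describe (the ``Piggyback Lemma'' via maximality of the relations in $R_{\w_1,\w_2}$, Priestley separation of the closed sets $\uparrow t^{-1}(1)$ and $\downarrow t^{-1}(0)$ to produce $a\in A$, and $\text{(Sep)}_{\CM,\Omega}$ together with preservation of $G$ to upgrade $\w(x(a))=\w(\alpha_{\M}(x))$ to $\alpha_{\M}(x)=x(a)$). The only cosmetic remark is that Zorn's Lemma is not needed for the maximality step, since $\M_1\times\M_2$ is finite.
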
                                                                      

Given~$\CM$,   the separation condition 
$\text{\rm (Sep)}_{\CM,\Omega}$ can always be satisfied by taking
$\Omega_{\M} 
=\CCD(\fnt U(\M), \two)$ for each $\M \in \CM$.

The elements of $\Omega$ are called  \defn{carrier maps}.
In the separation condition we shall  omit set brackets when $\CM$  contains a single  element, and likewise for~$\Omega$.
The term \defn{simple piggybacking} is used when, given $\CA$, there exists a 
finite algebra $\M$ with $\CA =\ISP(\M)$ and a single carrier map 
$\w \in \CCD(\fnt U(\M),\two)$ such that 
 $\text{(Sep)}_{\M,\w}$ holds.  
   Examples of simple piggyback dualities are given in~\cite{DWpig} and of multisorted piggyback dualities in \cite{DP87, CD98},
as well as in Section~\ref{Sec_Ex}.  
 Simple piggybacking is not always possible---the variety
of Kleene algebras provides an archetypal example; see 
 \cite{DP87}.   When $|\CM| = 1$, we shall say that the piggyback
duality supplied by Theorem~\ref{genpig} is \defn{single-sorted}. 

Assume that $\CM$ and $\Omega= \bigcup \Omega_\M$ are such that
$\CA = \ISP(\CM)$ and $\text{(Sep)}_{\CM,\Omega}$ holds. 
There is an intimate relationship between the elements $\M \in \CM$
and the associated  carrier maps as regards the representation of $\CA$ as a quasivariety.  
Whenever $\M\in\CM$ is such that $\Omega_{\M}=\emptyset$, condition ${\rm (Sep)}_{\CM,\Omega}$ implies, on the one hand, that ${\rm (Sep)}_{\CM\setminus\{\M\},\Omega}$ holds. On the other hand,  for each  $a \ne b$ in~$\M$,
we can find an algebra  $\M' \in \CM\setminus \{ \M'\}$, and maps $u \in \CA(\M,\M')$  and $\omega \in \Omega_{\M'}$ such that 
$\w(u(a)) \ne \w(u(b))$ and hence necessarily $u(a) \ne u(b)$.  Then $\M \in \ISP(\CM \setminus \{ \M'\})$, and this together with $\CA = \ISP(\CM)$ imply that $\CA = \ISP(\CM \setminus \{ \M'\})$. In particular we see that if $\text{(Sep)}_{\CM,\Omega}$
holds, then only those members $\M$ of $\CM$ for which $\Omega_{\M}$  is non-empty are needed when $\CM$ is used to represent $\CA$ as a finitely generated quasivariety, and to develop a piggyback duality as in Theorem~\ref{genpig}.
We shall make  crucial use of this elementary observation in the proof of 
Theorem~\ref{Theo:OntoCop} below.
(In Theorem~\ref{genpig} we took,  as the default,  $G$ to contain all
homomorphisms between members of $\CMT$.   Depending on the choice of $\Omega$, 
a smaller set $G$ may suffice.)

We are now almost ready to achieve our stated goal  of relating  the Priestley space $\fnt{HU}(\A)$ to the dual space $\fnt D(\A)$ of $\A\in \CA$ under the duality 
set up as in Theorem~\ref{genpig}. In the 
proof of Theorem~\ref{Theo:RevEng}  and several later proofs we  make 
use of the Joint Surjectivity  Lemma (see \cite[Lemma 7.2.2]{CD98}) which is a key ingredient in the proof of the Multisorted Piggybacking Theorem.  
For convenience we recall this lemma below.  
 Observe that condition~(3)
says that every element of the natural dual $\fnt{HU}(\A)$ can be realised
as the image of  an element in $\fnt D(\A)$ under   a map of the form $\w \circ -$.  Thus the lemma can be seen as a first step towards
relating 
$\fnt{HU}(\A)$  to the  multisorted dual $\fnt D(\A)$.

\begin{lemma} \label{jointsurj}
{\rm (Joint Surjectivity)}
Let $\CA=\ISP(\CM)$ and let $\Omega = \bigcup \,\Omega_\M$, where  let~$\Omega _{\M}$ is a {\rm(}possibly
empty{\rm)} subset of $\CCD(\fnt U(\M), \two)$,
 for each $\M\in\CM$.
Then the following 
conditions are equivalent:
\begin{newlist}
\item[{\rm (1)}] condition $\text{\rm (Sep)}_{\CM,\Omega}$, as given in 
Theorem~{\upshape\ref{genpig}}, is satisfied;
\item[{\rm (2)}] for every $\A\in\CA$ and  every $a, b \in \A$ with $a \ne b$ there exist $\M \in \CM$, $\w \in \Omega_{\M}$ and $x \in \CA(\A,\M)$ such that 
$\w(x(a)) \ne \w(x(b))$;
\item[{\rm (3)}]   for every $\A\in\CA$ and each $y \in \fnt H\fnt U(\A)$ there exist
$\M \in \CM$, $\w \in \Omega_{\M}$ and $x \in  \CA(\A,\M)$ such that
$y = \w \circ x$.
\end{newlist} 
\end{lemma}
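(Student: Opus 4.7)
The plan is to prove the equivalences cyclically, $(1) \Rightarrow (2) \Rightarrow (3) \Rightarrow (1)$, noting first that $(2) \Rightarrow (1)$ is immediate by specialising to $\A = \M$ with $x = u$. For $(1) \Rightarrow (2)$ I would use that $\CA = \ISP(\CM)$: any $\A \in \CA$ embeds into a product $\prod_{i \in I} \M_i$ with each $\M_i \in \CM$, so $a \ne b$ in $\A$ is separated by some projection $\pi_i \colon \A \to \M_i$, and applying $\text{(Sep)}_{\CM,\Omega}$ to $\pi_i(a), \pi_i(b) \in \M_i$ supplies $\M' \in \CM$, $u \in \CA(\M_i, \M')$ and $\omega \in \Omega_{\M'}$ with $\omega(u(\pi_i(a))) \ne \omega(u(\pi_i(b)))$; then $x := u \circ \pi_i$ is as required. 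For $(3) \Rightarrow (1)$, given $a \ne b$ in $\M \in \CM$, Priestley separation in $\fnt U(\M)$ yields $y \in \fnt H \fnt U(\M)$ with $y(a) \ne y(b)$; writing $y = \omega \circ x$ via $(3)$ applied to $\A = \M$ then witnesses $\text{(Sep)}_{\CM,\Omega}$ with $u := x$.

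The substantive step is $(2) \Rightarrow (3)$. Fix $\A \in \CA$ and consider
\[
S := \{\, \omega \circ x \mid \M \in \CM,\ \omega \in \Omega_{\M},\ x \in \CA(\A, \M)\,\} \subseteq \fnt H \fnt U(\A).
\]
The aim is to prove $S$ is both dense and closed in the Hausdorff space $\fnt H \fnt U(\A)$, whence $S = \fnt H \fnt U(\A)$. For density, a basic open neighbourhood of $y_0 \in \fnt H \fnt U(\A)$ has the form $U = \{\, y \mid y(a) = 1,\ y(b_1) = \cdots = y(b_n) = 0\,\}$; setting $c := b_1 \vee \cdots \vee b_n$, one has $y_0(a \wedge c) = 0 \ne 1 = y_0(a)$, so $a \wedge c \ne a$ in $\A$. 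Applying $(2)$ to this pair yields $\M, \omega, x$ with $\omega(x(a)) \ne \omega(x(a \wedge c))$; since $x$ preserves $\wedge$ (as $\CA$ is $\CCD$-based) and $\omega$ is a $\CCD$-homomorphism, an elementary two-valued calculation forces $\omega(x(a)) = 1$ and $\omega(x(c)) = 0$, placing $\omega \circ x$ in $S \cap U$. For closedness, each $\CA(\A, \M)$ is a closed (hence compact) subspace of the power $M^A$ with $M$ discrete; the map $x \mapsto \omega \circ x$ is continuous into $\two^A$; so its image is compact and therefore closed in the Hausdorff space $\two^A$, and $S$ is a finite union of such images, running over the finitely many pairs $(\M, \omega)$ with $\M \in \CM$ and $\omega \in \Omega_{\M}$.

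The main obstacle is the $(2) \Rightarrow (3)$ direction, and specifically pairing the density argument with closedness: density alone does not upgrade to equality, while closedness relies essentially on the finiteness hypotheses on $\CM$ and on each $\Omega_{\M}$ (to keep the union finite) together with the compactness of $\CA(\A, \M)$ when $\M$ is finite. Without these features the clean ``closed dense subset of a Hausdorff space'' argument would collapse, and a more delicate approximation via Priestley space structure would be required. The remaining bookkeeping with basic opens of $\fnt H \fnt U(\A)$ and the fact that $\omega$-compositions preserve $\wedge$ and $\vee$ is routine.
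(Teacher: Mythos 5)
Your proof is correct. Note that the paper itself does not prove this lemma --- it is quoted verbatim from Clark--Davey \cite[Lemma~7.2.2]{CD98} --- so there is no in-paper argument to compare against; your cyclic reduction of (1), (2), (3) to the single substantive implication (2)~$\Rightarrow$~(3), established by showing the set $S$ of maps $\w\circ x$ is closed (a finite union of continuous images of the compact spaces $\CA(\A,\M)$) and dense (your meet/join manipulation of a basic open set is exactly the observation that point-separation as in (2) is equivalent to density), is the standard argument given there, and your identification of where the finiteness of $\CM$ and of each $\Omega_{\M}$ is genuinely used is accurate.
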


For the remainder of this section we  assume that $\Omega$ and $R$ have  been chosen so that the conditions of  Theorem~\ref{genpig} are satisfied.  
We  establish some  additional notation.
For a fixed algebra $\A \in \ISP(\CM)$  and $\str{X}=\fnt D(\A)$, we 
let
$$
  \textstyle  Y=\bigcup\{\, X_{\M}\times\Omega_{\M}\mid \M\in\CM\,\}
$$ 
and equip it  
with the topology $\Tp^{Y}$ having as 
a base 
of open sets
$$
    \{\,U\times\{\w\}\mid  U\mbox{ open in } X_{\M}\mbox{ and }\w\in\Omega_{\M}\,\}
$$
and  with 
 the binary relation  $\preceq\, \,\subseteq Y^2$ defined by 
$$
    (x,\w_1)\preceq(y,\w_2)\mbox{ if }(x,y)\in r^{\X}\mbox{ for some }r\in R_{\w_1,\w_2}.  
$$

\begin{theorem}\label{Theo:RevEng}
For each $\A\in \CA$, the structure $(Y,\preceq,\Tp_Y)$ defined above 
has the following properties.
  \begin{itemize}
    \item[{\rm (i)}] The 
space $(\, Y,\Tp_Y)$ is compact and Hausdorff.
    \item[{\rm (ii)}] The binary relation $\preceq\, \subseteq Y^2$ is a pre-order on $Y$.
    \item[{\rm (iii)}]  Let $\approx\,=\,\preceq\cap\succcurlyeq$ 
				    be the equivalence relation on $Y$ determined by $\preceq$. Then 
$$
  (\, Y/_{\approx},{\preceq}/_{\approx},\Tp^Y\!/_{\approx})
$$
is a Priestley space isomorphic
 to $\fntH \fnt U(\A) $, where $\Tp^Y\!/_{\approx}$ denotes the quotient topology.
  \end{itemize}
\end{theorem}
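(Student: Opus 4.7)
The plan is to introduce the evaluation map $\Phi\colon Y\to \fnt H\fnt U(\A)$ defined by $\Phi(x,\w)=\w\circ x$, and to show that $\preceq$ is exactly the pullback along $\Phi$ of the pointwise order on $\fnt H\fnt U(\A)\subseteq\two^A$. Once this is in place, (i)--(iii) follow rapidly and the rest of the argument is formal.

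The main technical step would be the translation
$$
(x,\w_1)\preceq (y,\w_2)\ \iff\ \w_1\circ x\leq \w_2\circ y\text{ pointwise on }A.
$$
The forward implication is immediate, since any $r\in R_{\w_1,\w_2}$ is by definition contained in $(\w_1,\w_2)^{-1}({\leq})$. For the converse, the pair $(x,y)$ determines a homomorphism $\A\to\M_1\times\M_2$; if the stated pointwise inequality holds, its image is a subalgebra of $\M_1\times\M_2$ lying inside $(\w_1,\w_2)^{-1}({\leq})$, and by finiteness of $\M_1\times\M_2$ this subalgebra is contained in some maximal algebraic relation $r\in R_{\w_1,\w_2}$, whence $(x,y)\in r^{\str X}$. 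I expect this extension-to-a-maximal-subalgebra step to be the one that really exploits the definition of $R_{\w_1,\w_2}$; everything else is essentially bookkeeping.

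Given the translation, (ii) is immediate: $\preceq$ is the pullback of a partial order and therefore a pre-order, and $\approx$ coincides with the kernel of $\Phi$. By clause~(3) of Lemma~\ref{jointsurj}, $\Phi$ is surjective. For (i), $Y$ is a finite disjoint union of the spaces $X_{\M}\times\Omega_{\M}$, with each $X_{\M}=\CA(\A,\M)$ a closed subspace of the compact Hausdorff power $M^A$ and each $\Omega_{\M}$ finite discrete; hence $(Y,\Tp^Y)$ is compact Hausdorff.

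For (iii), continuity of $\Phi$ reduces to continuity of $(x,\w)\mapsto \w(x(a))$ for each $a\in A$, which is clear from the product topology on $X_\M$ and the discrete topology on $M$ and on $\Omega_\M$. Since $Y$ is compact and $\fnt H\fnt U(\A)$ is Hausdorff, $\Phi$ is a continuous surjection and hence a closed quotient map; because $\approx=\ker\Phi$, it factors as a homeomorphism $\bar\Phi\colon Y/_{\approx}\to \fnt H\fnt U(\A)$. The translation established above then identifies $\preceq/_{\approx}$ with the pointwise order on $\fnt H\fnt U(\A)\subseteq\two^A$, which is precisely the Priestley order, so $\bar\Phi$ is an isomorphism of Priestley spaces.
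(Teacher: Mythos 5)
Your proposal is correct and follows essentially the same route as the paper's proof: the central step in both is the translation $(x,\w_1)\preceq(y,\w_2)\iff \w_1\circ x\leq\w_2\circ y$, proved exactly as you do by noting that the image of the homomorphism $(x,y)\colon\A\to\M_1\times\M_2$ is a subalgebra contained in $(\w_1,\w_2)^{-1}({\leq})$ and hence, by finiteness, in some maximal relation $r\in R_{\w_1,\w_2}$; surjectivity of $\Phi$ via Lemma~\ref{jointsurj} and the compact-to-Hausdorff homeomorphism argument likewise match the paper. The only cosmetic difference is that the paper checks that the induced injection $\Phi'$ on the quotient is a continuous bijection onto a Hausdorff space, where you invoke the closed-quotient-map formulation; these are equivalent.
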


\begin{proof}  
We recall that for each  for $\M \in \CM$ the $\M$th sort of the multisorted structure
$\fnt D(\A)$ is $X_{\M} := \CA(\A, \M)$.
The topology of $Y$ coincides with the topology of the finite disjoint union of the topological spaces $X_{\M}\times\Omega_{\M}$,
 considering $X_{\M}$ as carrying the induced topology from $(X,\Tp^{\X})$ and $\Omega_{\M}$ the discrete topology. 
Then (i) follows from the fact that $\Omega$ is finite.

Now consider (ii). 
For each $\M_1,\M_2\in\CM$, each $x\in X_{\M_1}$ and $y\in X_{\M_2}$, and each $\w_i\in\Omega_{\M_i}$ (with $i\in\{1,2\}$),
\begin{align*} 
(x,\w_1)\preceq(y,\w_2) & \Longleftrightarrow (x,y)\in r^{\X} \text{ for some } r\in R_{\w_1,\w_2}^{\X}\\
& \Longleftrightarrow  \{(x(a),y(a))\mid a\in {\A}\}\subseteq r \text{ for some } r\in R_{\w_1,\w_2}\\
& \Longleftrightarrow  \{(x(a),y(a))\mid a\in {\A}\}\subseteq (\w_1,\w_2)^{-1}(\leq)\\
    & \Longleftrightarrow   (\w_1\circ x)(a)\leq (\w_2\circ y)(a) \text{ for each } a\in {\A}\\ 
  & \Longleftrightarrow \w_1\circ x\leq \w_2\circ y \text{ in } \fntH\fnt U(\A).
\end{align*}
It is straightforward to check that $\preceq_{Y_{\A}}$ is reflexive and transitive, and therefore a pre-order, which proves (ii).

For (iii) we need to set up the required isomorphism between the quotient structure obtained in~(ii) and $\fntH \fnt U(\A)$. 
Let $\Phi\colon Y\to \fntH \fnt U(\A) $ 
be
defined by $\Phi(x,\w)=\w\circ x$. From the characterisation of $\preceq$ above 
it follows that 
\[
(x,\w_1)\preceq_{Y_{\A}}  (y,\w_2)
\Longleftrightarrow 
\Phi(x,\w_1)\leq \Phi(y,\w_2),
\]
for each $(x,\w_1),(y,\w_2)\in Y$. 
Thus $\ker(\Phi)$ equals~$\approx$.   The unique map 
\[
\Phi'\colon Y/_{\approx}\to \fntH\fnt U(\A)\]
such that $\Phi=\Phi'\circ\rho$ is
 order-preserving and 
order-reversing; 
 here $\rho\colon Y\to {Y}/_{\approx}$ denotes the quotient map $\rho(x,\w)=[(x,\w)]_{\approx}$; see Fig.~\ref{Fig:HAfromDA}.   It follows that $\Phi'$ is injective.
 \begin{figure} [ht]
\begin{center}
\begin{tikzpicture} 
[auto, 
 text depth=0.25ex,
] 
\matrix[row sep= .9cm, column sep= .9cm]
{
\node (YA) {$(Y,\preceq,\Tp^Y)$}; & &\node (HA) {$\fntH\fnt U(\A) $};\\
 \node (YAQ) {$(\, {Y}/_{\approx},{\preceq}/_{\approx},\Tp^{Y}\!/_{\approx})$};&\\ 
};
\draw [->>] (YA) to node [swap] {$\rho$} (YAQ);
\draw [->] (YA) to node  {$\Phi$} (HA);
\draw [->] (YAQ) to node [swap] {$\Phi'$} (HA);

\end{tikzpicture}
\end{center}\caption{The proof of Theorem~\ref{Theo:RevEng}}\label{Fig:HAfromDA}
\end{figure} 

To prove that $\Phi'$ is also surjective, we appeal to Lemma~\ref{jointsurj}.  Condition (2) in the lemma holds because 
 $\CA = \ISP(\CM)$ and  condition $\text{(Sep)}_{\CM,\Omega}$ is assumed to hold. 
Therefore  $\Phi'$ and $\Phi$ are surjective maps.

Given 
$a\in \A$, let $\pi_a\colon y \mapsto y(a)$ be  the evaluation map from $\fntH\fnt U(\A)$ into~$\str 2$. 
Then
\begin{align*}
    (\pi_a\circ\Phi)^{-1}(\{1\})& =\{\,(x,\w)\in Y\mid (\w\circ x)(a)=1\,\}\\
   &=\{\,(x,\w)\in Y\mid x\in (e_{\A}(M)(a))^{-1}(\w^{-1}(\{1\}))\mbox{ and } \w\in \Omega_{\M}\,\}\\
   &\textstyle=\bigcup\{\,e_{\A}(M)(a)^{-1}(\w^{-1}(\{1\}))\times \{\w\}\mid \M\in \CM \mbox{ and }\w\in \Omega_{\M}\,\}.
\end{align*}
Since $e_{\A}(a)$ is continuous for each $a\in\A$ it follows that $e_{\A}(a)^{-1}(\w^{-1}(\{1\}))$ is open in $X_{\M}$, 
and therefore $(\pi_a\circ\Phi)^{-1}(\{1\})$ is open in $\Tp^Y$. 
Similarly, we have that $(\pi_a\circ\Phi)^{-1}(\{0\})$ is open, proving that $\pi_a\circ\Phi$ is continuous for each $a\in A$.
Since $\fntH\fnt U(\A) $ inherits  its 
 topology from $\stwiddle{\two}^{A}$, it follows that $\Phi$ is continuous.
By the definition of the quotient topology,
$\Phi'$ is continuous.

Combining  (i) with the fact that the quotient map $\rho$ is continuous, 
we obtain that $(\, {Y}/_{\approx},\Tp^Y\!/_{\approx})$ is  compact. 
Since $\Phi'$ is 
an injective  continuous map from  $(\, {Y}/_{\approx},\Tp^Y\!/_{\approx})$ 
onto the
Hausdorff  space $\fntH \fnt U(\A)$ and hence  that $\Phi'$ is a homeomorphism. 
Therefore 
$\Phi'$ is an isomorphism of Priestley spaces.
\end{proof}

A special case of Theorem~\ref{Theo:RevEng} deserves to be recorded  as a corollary.  It describes the situation which pertains, in particular, whenever coproducts are preserved,
as Theorems~\ref{MainTheo} 
and~\ref{Theo:CoproToNatDual} will show. 

\begin{corollary} \label{Cor-to-RevEng}
Assume $\CA = \ISP(\M)$ where $\M$ is a finite $\CCD$-based algebra 
and that $\text{\rm (Sep)}_{\M,\w}$ holds for some $\w \in \fnt H\fnt U(\M)$, so that $\CA$ has a simple piggyback duality. If  $|R_{\w,\w}|=1$, then for each $\A \in \CA$, the Priestley 
space $\fnt H\fnt U(\A)$ is isomorphic to the ordered space
obtained from the natural dual space $\X = \fnt D(\A)$ by equipping 
its underlying set with the partial order relation $ R_{\w,\w}^\X$ and its existing topology.  
\end{corollary}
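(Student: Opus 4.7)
The plan is to obtain the corollary as a specialization of Theorem \ref{Theo:RevEng} to the piggyback data $\CM=\{\M\}$, $\Omega_{\M}=\{\w\}$ and $R=R_{\w,\w}$. Under these choices the disjoint union $Y=\bigcup\{X_{\M'}\times\Omega_{\M'}\mid\M'\in\CM\}$ from that theorem reduces to $X_{\M}\times\{\w\}$, which I identify with the underlying set of $\X=\fnt D(\A)=\CA(\A,\M)$, and the topology $\Tp^{Y}$ becomes the existing topology of $\X$. The hypothesis $|R_{\w,\w}|=1$ means that $R$ consists of a single relation $r$, so the chain of equivalences used to prove Theorem \ref{Theo:RevEng}(ii) simplifies to $(x,\w)\preceq(y,\w)$ iff $(x,y)\in r^{\X}$; under the identification, $\preceq$ is literally $R_{\w,\w}^{\X}$.

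The core task is then to check that the pre-order equivalence $\approx\,=\,\preceq\cap\succcurlyeq$ of Theorem \ref{Theo:RevEng}(iii) collapses to the identity on $\X$, so that $R_{\w,\w}^{\X}$ is antisymmetric (hence a genuine partial order) and the quotient structure $(Y/{\approx},\preceq/{\approx},\Tp^{Y}/{\approx})$ is literally $(\X,R_{\w,\w}^{\X},\Tp^{\X})$. I would reduce this to the claim $r\cap r^{-1}=\Delta_{\M}$: the intersection $r\cap r^{-1}\subseteq\M\times\M$ is automatically a symmetric, reflexive and transitive subalgebra of $\M^{2}$, hence an algebra congruence of $\M$ contained in $(\w,\w)^{-1}(=)$; the uniqueness of $r$ in $R_{\w,\w}$, together with $\text{(Sep)}_{\M,\w}$ and the characterisation of pointwise separation furnished by Lemma \ref{jointsurj}, are then brought to bear to rule out any non-diagonal class. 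Granting $r\cap r^{-1}=\Delta_{\M}$, if $(x,y)$ and $(y,x)$ both lie in $R_{\w,\w}^{\X}$ then $(x(a),y(a))\in r\cap r^{-1}=\Delta_{\M}$ for each $a\in\A$, whence $x=y$.

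With antisymmetry in hand, Theorem \ref{Theo:RevEng}(iii) delivers directly the Priestley space isomorphism between $(\X,R_{\w,\w}^{\X},\Tp^{\X})$ and $\fnt H\fnt U(\A)$, concretely realised by $\Phi\colon x\mapsto\w\circ x$; continuity, order-compatibility and surjectivity of $\Phi$ are already recorded inside the proof of Theorem \ref{Theo:RevEng} and require no further work. The only non-routine point is the antisymmetry of $R_{\w,\w}^{\X}$, which is where the hypothesis $|R_{\w,\w}|=1$ does its work; everything else is a direct transcription of the general theorem into the single-sorted, single-carrier setting.
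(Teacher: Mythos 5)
Your reduction is the right one, and it is in fact more careful than the paper, which records Corollary~\ref{Cor-to-RevEng} without proof as an immediate specialisation of Theorem~\ref{Theo:RevEng}. You correctly note that the specialisation to $\CM=\{\M\}$, $\Omega=\{\w\}$, $|R_{\w,\w}|=1$ only yields $\fnt H\fnt U(\A)\cong Y/_{\approx}$ with $Y$ identified with $X=\CA(\A,\M)$ and $\preceq$ with $R_{\w,\w}^{\X}$, so that the entire content of the corollary is the triviality of $\approx$, i.e.\ the antisymmetry of $r^{\X}$, which you correctly reduce to $r\cap r^{-1}=\Delta_{\M}$. Your observation that $r\cap r^{-1}$ is a congruence of $\M$ contained in $(\w,\w)^{-1}(=)$ is also sound: reflexivity and transitivity of $r$ do follow from $|R_{\w,\w}|=1$, since $\Delta_{\M}$ and $r\circ r$ are subalgebras of $\M^{2}$ contained in $(\w,\w)^{-1}(\leq)$ and hence in the unique maximal one.

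The gap is the step you defer. ``Uniqueness of $r$ together with $\text{(Sep)}_{\M,\w}$ and Lemma~\ref{jointsurj}'' is not an argument, and no argument from these hypotheses exists: take $\CA=\CCD$, $\M=\two\times\two$ and $\w=\pi_{1}$. Then $\CA=\ISP(\M)$ and $\text{(Sep)}_{\M,\w}$ holds (the swap automorphism separates the two pairs that $\w$ alone does not), while $(\w,\w)^{-1}(\leq)$ is itself a bounded sublattice of $\M^{2}$ (the preimage of $\leq$ under $\w\times\w$), so $R_{\w,\w}$ is a singleton; yet $r\cap r^{-1}=\{(c,d)\mid\w(c)=\w(d)\}\neq\Delta_{\M}$, and already for $\A=\M$ the set $\fnt D(\A)$ has four points while $\fnt H\fnt U(\A)$ has two. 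The reason $\text{(Sep)}_{\M,\w}$ cannot close the argument is that the separating endomorphism $u$ it provides need not satisfy $(u\times u)(r)\subseteq r$, so from $(a,b)\in r\cap r^{-1}$, which gives only $\w(a)=\w(b)$, nothing follows about $\w(u(a))$ versus $\w(u(b))$; the congruence $r\cap r^{-1}$ is not fully invariant in general. Thus $r\cap r^{-1}=\Delta_{\M}$ (equivalently, injectivity of $x\mapsto\w\circ x$ on each $\CA(\A,\M)$) is a genuinely additional condition on the pair $(\M,\w)$, which must be verified in each application --- for De Morgan algebras one reads it off the displayed relation $r$ --- rather than deduced from $\text{(Sep)}_{\M,\w}$ and $|R_{\w,\w}|=1$. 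Your proof therefore fails at exactly the point you flag as the only non-routine one.
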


The corollary can be regarded as asserting that, under the stipulated
conditions, the quasivariety $\CA$ has a natural duality which is, in a 
certain sense, also a \DP-based
 duality.  In addition, if we
have already derived a  \DP-based duality for $\CA$, then we can regard the structure on the Priestley dual spaces which is used to capture any additional operations as living also on the natural dual spaces.
Thus there is a very close relationship between a \DP-based 
duality for~$\CA$ and the piggyback duality mentioned in the corollary.

One additional remark deserves to be made about the special situation 
covered by Corollary~\ref{Cor-to-RevEng}.  In general, some of the sets $R_{\w_1,\w_2}$, for $\w_1,\w_2 \in \Omega$, arising in 
Theorem~\ref{Theo:RevEng} may be empty (and we shall see instances
of this in Section~\ref{Sec_Ex}).  But when Corollary~\ref{Cor-to-RevEng}
applies, then $R_{\w,\w}$ is necessarily non-empty.

We note that in case we have $\CA = \ISP(\M)$ but of necessity 
more than one carrier map, there is a choice as to how to formulate 
a natural duality for $\CA$.  If we base the natural duality on the  single 
algebra~$\M$ then the dual spaces will be structured Boolean spaces,
with the structure lifted from an alter ego with underlying set $M$.  The 
construction in Theorem~\ref{Theo:RevEng} then tells us that, for $\A \in \CA$ we get 
the Priestley dual space
 of the underlying lattice $\fnt U(\A)$ by first taking multiple copies of 
$\fnt D(\A)$.  If, instead, we opt to set up the natural duality with 
multiple copies of $\M$ and a single carrier map associated with each,
then we essentially build the first stage of the translation process into the 
natural duality.  Which of these two approaches is to be preferred is 
largely a matter of taste.  In Section~\ref{Sec_Ex} we adopt the former
approach, noting that the second has been used in the literature in certain cases.

\section{Coproducts}\label{Sec_Cop}

In this section we  embark on discussion of the central topic of this paper
and  
present our main results.
 Specifically, we shall describe the structure of coproducts in finitely generated $\CCD$-based classes of algebras 
in terms of 
the corresponding coproducts in $\CCD$.
 Background material on  
coproducts in a
categorical context can be found in \cite{McL1969}.

 Let 
$\CA$ be a quasivariety of algebras, not yet assumed
 to be finitely generated.  
Assume that $\CA$ is $\CCD$-based, and that $\fnt{U} \colon \CA \to \CCD$
is the associated forgetful functor.
Let  $\class{K}$ be a set of algebras in $\CA$.
Since $\CA$ is a quasivariety, the coproduct $\coprod_{\CA} \class{K}$ in $\CA$ 
 always exists.  
In what follows, we shall omit the subscript in $\coprod_{\CA}$ when the class where we are considering the coproduct is clearly determined by context.
Let $\{\,\epsilon_{\B}\colon \B\to\coprod \class{K}\mid \B\in \class{K}\,\}$
 be the universal co-cone that determines the coproduct $\coprod \class{K}$ up to isomorphism 
(Fig.~\ref{fig:coprod}(a)). 
Then $\fnt U$ \defn{preserves coproducts} if and only if,  for each set $\class{K}\subseteq\CA$,
\[\textstyle
\{\,\fnt U(\epsilon_{\A})\colon \fnt U(\A)\to\fnt U(\coprod(\class{K}))\mid \A\in \class{K}\,\}
\]
 is a universal co-cone in $\CCD$.
There always exists a unique map 
\[
  \textstyle\chisub{\class{K}}\colon \coprod\fnt U(\class{K})\to \fnt U(\coprod\class{K})
\]
such that $\chisub{\class{K}}\, \circ\, \varepsilon_{\B}=\fnt U(\epsilon_{\B})$ for each $\B\in\class{K}$, where the family of maps
$  \{\varepsilon_{\B}\colon \fnt U(\B)\to \coprod\fnt U(\class{K})\mid \B\in\class{K}\}$ is the universal co-cone in $\CCD$ for $\coprod\fnt U(\class{K})$ 
(Fig.~\ref{fig:coprod}(b)).
Proving that $\fnt U$ preserves coproducts is equivalent to proving that, for each set of algebras $\class{K}$, the homomorphism $\chisub{\class{K}}$ is an isomorphism.
This in turn  is equivalent to proving that~$\fntH(\chi_{\class{K}})$ is an isomorphism (in the category of Priestley spaces) between $\fntH\fnt U(\coprod \class{K})$ and $\fntH (\coprod \fnt U(\class{K}))$.

Since 
$\fntH\colon \CCD\to \CP$ sends coproducts into cartesian products,  we can identify $\fntH (\coprod \fnt U(\class{K}))$ with $\prod \fntH\fnt U(\class{K})$ for each set 
$\class{K}\subseteq \CA$. Under this identification,
\[
  \textstyle\iota_{\class{K}}=\fntH(\chisub{\class{K}})\colon \fntH\fnt U(\coprod \class{K})\to \prod\fntH\fnt U(\class{K})
\]
is the unique map such that $\pi_{\fntH\fnt U(\B)}\circ\iota_{\class{K}}=\fntH\fnt U(\epsilon_{\B})$ for each $\B\in\class{K}$, where 
\[
  \textstyle\pi_{\fntH \fnt U(\B)}=\fntH(\varepsilon_{\B})\colon \prod\fntH\fnt U(\class{K})\to \fntH\fnt U(\B)
\]
 denotes the projection map (Fig.~\ref{fig:coprod}(c)).
In order to determine when $\iota_{\class{K}}$ is an isomorphism of Priestley spaces we shall first describe this map using Theorem~\ref{Theo:RevEng} (see Lemma~\ref{Lem:FactMap}).

\begin{figure} [ht] 
\begin{center}
\begin{tikzpicture} 
[auto,
 text depth=0.25ex,
 move up/.style=   {transform canvas={yshift=2.5pt}},
 move down/.style= {transform canvas={yshift=-2pt}},
 move left/.style= {transform canvas={xshift=-2.5pt}},
 move right/.style={transform canvas={xshift=2.5pt}}] 
\matrix[row sep= .9cm, column sep= .6cm]
{
\node (Cop) {$\coprod\class{K}$}; 
& \node (CU) {$\coprod\fnt U(\class{K})$};  
&&\node (UC) {$\fnt U(\coprod\class{K})$};
& \node (PH) {$\prod\fntH\fnt U(\class{K})$};
& &  \node (HCop) {$\fntH\fnt U(\coprod\class{K})$}; \\
   \node (Alg) {$\B$}; 
& &  &\node (UAlg) {$\fnt U(\B)$};
& & 
&\node (HAlg) {$\fntH\fnt U(\B)$}; \\[-.4cm] 
{\hspace{-.2cm}${\rm (a)}\quad $}; &&{$ {\rm (b)}$};&&&&{\hspace{-.7cm}${\rm (c)}$};\\[-.3cm]
};
\draw [->] (Alg) to node  {$\epsilon_{\B}$} (Cop);
\draw [->] (UAlg) to node  {$\varepsilon_{\B}$} (CU);
\draw [->] (UAlg) to node [swap] {$\fnt U(\epsilon_{\B})$} (UC);
\draw [->] (CU) to node {$\chisub{\class{K}}$}(UC);
\draw [->] (HCop) to node  {$\fntH\fnt U(\epsilon_{\B})$} (HAlg);
\draw [->>] (PH) to node [swap] {$\pi_{\fntH\fnt U(\B)}$} (HAlg);
\draw [->] (HCop) to node [swap]{$\iota_{\class{K}}$}(PH);

\end{tikzpicture}
\end{center}
\caption{Universal mapping definitions of $\chisub{\class{K}}$ and $\iota_{\class{K}}$}\label{fig:coprod}
\end{figure} 
We   
fix until further notice a  $\CCD$-based quasivariety $\CA$ 
which is expressible as~$\CA = \ISP(\CM)$, where $\CM$ is a finite set of finite algebras.
Adopting  
the notation 
introduced 
 in 
Theorem~\ref{Theo:RevEng}, we have 
\[
\textstyle Y_{\coprod \class{K}}=\bigcup\{\,\fnt{D}(\coprod \class{K})_{\M}\times\Omega_{\M}\mid \M\in\CM\,\}.
\]
We now use the fact 
 that $\fnt{D}(\coprod \class{K})\cong \prod\fnt{D}(\class{K})= \prod \{\,\fnt{D}(\B)\mid \B\in \class{K}\,\}$ and that, under this isomorphism,  $\fnt D(\epsilon_{\B})\colon \prod \{\,\fnt{D}(\B)\mid \B\in \class{K}\,\}\to \fnt D(\B)$ is the projection map. 
Hence for each $\B\in \class{K}$ we have  
a map   
$\xi_{\B}\colon Y_{\coprod \class{K}}\to Y_{\B}$ defined by 
\[
\xi_{\B}(\vec{x},\w)= 
(\fnt D(\epsilon_{\B})(\vec{x}),\w)=(x_{\B},\w).
\] 
Since $\fnt D(\epsilon_{\B})$ preserves the relations in $R$,  it follows that,   if $(\vec{x},\w)\preceq_{\coprod\class{K}}(\vec{y},\w')$ then $\xi_{\B}(\vec{x},\w)\preceq_{\B}\xi_{\B}(\vec{y},\w')$. Moreover $\xi_{\B}$ is continuous for each $\B\in \class{K}$.
We 
can then
define a 
 map 
$\Psi_{\coprod \class{K}}\colon Y_{\coprod \class{K}}\to 
 \prod\fntH\fnt U(\class{K})$ by 
$$
\textstyle (\vec{x},\w)\in\prod\fnt{D}(\class{K})\times\Omega_{\M}\mapsto
 \vec{y}\in\prod \fntH\fnt U(\class{K}),
$$ 
where $y_{\B}=\w\circ x_{\B}=\Phi_{\B}(\xi_{\B}(\vec{x},\w))$ for each $\B\in \class{K}$ and for each $\M\in \CM$. 
It follows that 
$$
  (\vec{x}_1,\w_1)\preceq_{\coprod \class{K}} (\vec{x}_2,\w_2)
  \Longrightarrow \Psi_{\coprod \class{K}}(\vec{x}_1,\w_1)\leq \Psi_{\coprod \class{K}}(\vec{x}_2,\w_2).
$$
In fact, if $(\vec{x}_1,\w_1)\preceq_{\coprod \class{K}} (\vec{x}_2,\w_2)$,
then there exists 
$r \in R_{\w_1,\w_2}$ 
for which 
$(\vec{x}_1,\vec{x}_2)\in r$ on $\fnt D(\coprod \class{K})$.
Therefore  $((x_1)_{\B},(x_2)_{\B})\in r$ 
on $\fnt D(\B)$
for each $\B\in \class{K}$. 
Then, 
for each $\B\in\class{K}$, the maps
$\xi_{\B}$ and $\Psi_{\coprod\class{K}}$ are such that the diagram in Fig.~{\upshape \ref{Fig:DefPsi}}
commutes.
\begin{figure} [ht]
\begin{center}
\begin{tikzpicture} 
[auto,
 text depth=0.25ex,
 move up/.style=   {transform canvas={yshift=2.5pt}},
 move down/.style= {transform canvas={yshift=-2pt}},
 move left/.style= {transform canvas={xshift=-2.5pt}},
 move right/.style={transform canvas={xshift=2.5pt}}] 
\matrix[row sep= .9cm, column sep= .9cm]
{ 
\node (YCop) {$Y_{\coprod\class{K}}$}; & \node (PH) {$\prod\fntH\fnt U(\class{K})$}; \\
    \node (YAlg) {$Y_{\B}$};& \node (HAlg) {$\fntH\fnt U(\B)$};\\ 
};
\draw [->] (YCop) to node [swap]  {$\xi_{\B}$}(YAlg);
\draw [->] (PH) to node  {$\pi_{\fntH\fnt U(\B)}$} (HAlg);
\draw [->] (YCop) to node {$\Psi_{\coprod\class{K}}$} (PH);
\draw [->>] (YAlg) to node [swap] {$\Phi_{\B}$}(HAlg);

\end{tikzpicture}
\end{center}\caption{The definition of $\Psi_{\coprod\class{K}}$ }\label{Fig:DefPsi}
\end{figure} 

By applying Theorem~\ref{Theo:RevEng} to $\coprod \class{K}$ 
we can obtain 
the following lemma.

\begin{lemma}\label{Lem:FactMap}
For each set of algebras $\class{K}\subseteq \CA$, the map 
$$
  \textstyle\iota_{\class{K}}\colon \fntH\fnt U(\coprod \class{K})\to \prod\fntH\fnt U(\class{K})
$$
satisfies  
$ 
\iota_{\class{K}}\circ \Phi_{\coprod \class{K}}=\Psi_{\coprod \class{K}}
$. 
That is,  the diagram in Fig.~{\upshape \ref{Fig:Factor}}
commutes.  
\begin{figure} [ht]
\begin{center}
\begin{tikzpicture} 
[auto,
 text depth=0.25ex,
 move up/.style=   {transform canvas={yshift=2.5pt}},
 move down/.style= {transform canvas={yshift=-2pt}},
 move left/.style= {transform canvas={xshift=-2.5pt}},
 move right/.style={transform canvas={xshift=2.5pt}}] 
\matrix[row sep= .9cm, column sep= .9cm]
{  & & \node (PH) {$\prod\fntH\fnt U(\class{K})$};\\
\node (YCop) {$Y_{\coprod\class{K}}$}; & \node (HCop) {$\fntH\fnt U(\coprod\class{K})$}; \\
    \node (YAlg) {$Y_{\B}$};& \node (HAlg) {$\fntH\fnt U(\B)$};\\ 
};
\draw [->] (YCop) to node [swap]  {$\xi_{\B}$}(YAlg);
\draw [->] (HCop) to node  {$\fntH\fnt U(\epsilon_{\B})$} (HAlg);
\draw [->>] (YCop) to node {$\Phi_{\coprod\class{K}}$} (HCop);
\draw [->>] (YAlg) to node [swap] {$\Phi_{\B}$}(HAlg);
\draw [->] (HCop) to node {$\iota_{\class{K}}$}(PH);
\draw [->](YCop) .. controls +(.5,1) and +(-2,.2) .. node {$\Psi_{\coprod\class{K}}$} (PH);
\draw [->>] (PH) .. controls +(-.6,-2.5) and +(2,0) .. node {$\pi_{\fntH\fnt U(\B)}$} (HAlg);

\end{tikzpicture}
\end{center}\caption{Lemma~\ref{Lem:FactMap}}\label{Fig:Factor}
\end{figure} 
\end{lemma}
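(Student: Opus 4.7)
The plan is to invoke the universal property of the product $\prod\fntH\fnt U(\class{K})$ so as to reduce the desired identity $\iota_{\class{K}}\circ\Phi_{\coprod\class{K}}=\Psi_{\coprod\class{K}}$ to the family of componentwise equalities
\[
\pi_{\fntH\fnt U(\B)}\circ\iota_{\class{K}}\circ\Phi_{\coprod\class{K}}=\pi_{\fntH\fnt U(\B)}\circ\Psi_{\coprod\class{K}},\qquad \B\in\class{K}.
\]
I would then simplify each side separately. On the left, the defining property of $\iota_{\class{K}}$ displayed in Fig.~\ref{fig:coprod}(c), namely $\pi_{\fntH\fnt U(\B)}\circ\iota_{\class{K}}=\fntH\fnt U(\epsilon_{\B})$, collapses the expression to $\fntH\fnt U(\epsilon_{\B})\circ\Phi_{\coprod\class{K}}$. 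On the right, the commutative square of Fig.~\ref{Fig:DefPsi} gives $\pi_{\fntH\fnt U(\B)}\circ\Psi_{\coprod\class{K}}=\Phi_{\B}\circ\xi_{\B}$. The whole lemma therefore reduces to establishing, for each $\B\in\class{K}$, the identity
\[
\fntH\fnt U(\epsilon_{\B})\circ\Phi_{\coprod\class{K}}=\Phi_{\B}\circ\xi_{\B}
\]
of maps $Y_{\coprod\class{K}}\to \fntH\fnt U(\B)$.

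Next I would close with a direct pointwise chase. Given $(\vec{x},\w)\in Y_{\coprod\class{K}}$, with $\vec{x}\in \fnt D(\coprod\class{K})_{\M}$ and $\w\in\Omega_{\M}$ for some $\M\in\CM$, the definition of $\Phi_{\coprod\class{K}}$ from Theorem~\ref{Theo:RevEng} gives $\Phi_{\coprod\class{K}}(\vec{x},\w)=\w\circ\vec{x}$; since $\fntH\fnt U(\epsilon_{\B})$ acts by precomposition with $\epsilon_{\B}$, the left-hand side evaluates to $(\w\circ\vec{x})\circ\epsilon_{\B}$. For the right-hand side, $\xi_{\B}(\vec{x},\w)=(\fnt D(\epsilon_{\B})(\vec{x}),\w)=(x_{\B},\w)$, where, under the isomorphism $\fnt D(\coprod\class{K})\cong \prod\fnt D(\class{K})$, the component $x_{\B}$ coincides with $\vec{x}\circ\epsilon_{\B}$. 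Hence $\Phi_{\B}(x_{\B},\w)=\w\circ x_{\B}=\w\circ(\vec{x}\circ\epsilon_{\B})$, and associativity of composition yields $(\w\circ\vec{x})\circ\epsilon_{\B}=\w\circ(\vec{x}\circ\epsilon_{\B})$, closing the identity.

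No serious obstacle is anticipated: once the universal property of the product reduces matters to componentwise verification, the argument is a short diagram chase combining the definition of $\iota_{\class{K}}$, the commutative triangle defining $\Psi_{\coprod\class{K}}$, and the fact that $\fnt D(\epsilon_{\B})$ acts by precomposition with $\epsilon_{\B}$. The only point that requires care is to keep consistent the two descriptions of $\vec{x}$\,---\,as a single homomorphism out of $\coprod\class{K}$ and as the tuple $(x_{\B})_{\B\in\class{K}}$ under the identification $\fnt D(\coprod\class{K})\cong\prod\fnt D(\class{K})$\,---\,so that $\fnt D(\epsilon_{\B})(\vec{x})$ and $\vec{x}\circ\epsilon_{\B}$ are unambiguously identified.
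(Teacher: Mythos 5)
Your proposal is correct and follows essentially the same route as the paper: both reduce the identity to the componentwise equalities $\pi_{\fntH\fnt U(\B)}\circ\iota_{\class{K}}\circ\Phi_{\coprod\class{K}}=\pi_{\fntH\fnt U(\B)}\circ\Psi_{\coprod\class{K}}$ via the universal property of the product, and then verify these by the defining property of $\iota_{\class{K}}$ together with the pointwise computation $\fntH\fnt U(\epsilon_{\B})(\w\circ\vec{x})=\w\circ\vec{x}\circ\epsilon_{\B}=\w\circ x_{\B}$. Your explicit care over the identification $\fnt D(\epsilon_{\B})(\vec{x})=\vec{x}\circ\epsilon_{\B}$ is a welcome clarification but not a departure from the paper's argument.
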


\begin{proof}
Let $(\vec{x},\w)\in Y_{\coprod \class{K}}$, then $(\vec{x},\w)\in(\prod \fnt{D}(\class{K})_{\M})\times\Omega_{\M}$ for some $\M\in\CM$. 
Then, for each $\B\in\class{K}$, 
\begin{align*}
(\pi_{\fntH\fnt U(\B)}\circ\iota_{\class{K}}\circ \Phi_{\coprod \class{K}})(\vec{x},\w)&=(\pi_{\fntH\fnt U(\B)}\circ\iota_{\class{K}})(\w\circ \vec{x}) =\fntH\fnt U(\epsilon_{\B})(\w\circ \vec{x})\\
&= (\pi_{\fntH\fnt U(\B)} \circ\Psi_{\coprod \class{K}})(\vec{x},\w). \qedhere
\end{align*} 
\end{proof}

By  Lemma~\ref{jointsurj}, 
 $  \{\,\w\circ x\mid x\in \fnt{D}(\B)\mbox{ and }\w\in\Omega\,\}=\fntH\fnt U(\B).
$
Hence  the assignment 
\[
f 
\mapsto\Lambda_{\B}(f) :=\{\,\w\in\Omega\mid f=\w\circ x\mbox{ for some } x\in \fnt D(\B)\}
\] 
gives  a well-defined map $\Lambda_{\B}$ from 
$\fntH\fnt U(\B)$ into the family of non-empty subsets
of $\Omega$.
With this notation the following result follows directly from Theorem~\ref{Theo:RevEng}.

\begin{corollary}\label{Cor:Range}
For every set of algebras $\class{K}\subseteq \CA$,
$$\textstyle
\iota_{\class{K}}\big(\fntH\fnt U(\coprod \class{K})\big)=\{\,\vec{y}\in \prod\fntH\fnt U(\class{K})\mid \bigcap\{\,\Lambda_{\B}(y_{\B})\mid \B\in \class{K}\,\}\neq\emptyset\,\}.
$$  
\end{corollary}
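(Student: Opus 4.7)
The plan is to reduce the statement to a computation of the image of the auxiliary map $\Psi_{\coprod\class{K}}$ (rather than $\iota_{\class{K}}$ itself), and then to unwind the definitions. By Lemma~\ref{Lem:FactMap} we have $\iota_{\class{K}}\circ\Phi_{\coprod\class{K}} = \Psi_{\coprod\class{K}}$, and the proof of Theorem~\ref{Theo:RevEng} established that $\Phi_{\coprod\class{K}}$ is surjective. Hence $\iota_{\class{K}}(\fntH\fnt U(\coprod\class{K}))$ coincides with the image of $\Psi_{\coprod\class{K}}$, and my task is to show that this image equals the right-hand side.

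For the inclusion ``$\subseteq$'', I would take $\vec{y}\in \iota_{\class{K}}(\fntH\fnt U(\coprod\class{K}))$ and choose a preimage $(\vec{x},\w)\in Y_{\coprod\class{K}}$ under $\Psi_{\coprod\class{K}}$, with $\w\in\Omega_{\M}$ for some $\M\in\CM$ and $\vec{x}\in\fnt{D}(\coprod\class{K})_{\M}$. By the very definition of $\Psi_{\coprod\class{K}}$, for each $\B\in\class{K}$ we then have $y_{\B} = \w\circ x_{\B}$ with $x_{\B}\in\fnt{D}(\B)_{\M}$, so $\w\in\Lambda_{\B}(y_{\B})$ for every $\B\in\class{K}$. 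Hence $\w\in\bigcap\{\Lambda_{\B}(y_{\B})\mid \B\in\class{K}\}$, which is therefore non-empty.

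For the reverse inclusion ``$\supseteq$'', I would fix $\vec{y}\in\prod\fntH\fnt U(\class{K})$ and a carrier map $\w$ in the (assumed non-empty) intersection $\bigcap_{\B\in\class{K}}\Lambda_{\B}(y_{\B})$. Let $\M\in\CM$ be such that $\w\in\Omega_{\M}$. For each $\B\in\class{K}$, the condition $\w\in\Lambda_{\B}(y_{\B})$ together with $\w\colon\fnt U(\M)\to\two$ forces a choice of $x_{\B}\in\CA(\B,\M)=\fnt{D}(\B)_{\M}$ with $y_{\B}=\w\circ x_{\B}$. Invoking the identification $\fnt{D}(\coprod\class{K})_{\M}=\prod\{\fnt{D}(\B)_{\M}\mid \B\in\class{K}\}$ already used in the construction of $\Psi_{\coprod\class{K}}$, I assemble $\vec{x}=(x_{\B})_{\B\in\class{K}}\in \fnt{D}(\coprod\class{K})_{\M}$. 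Then $(\vec{x},\w)\in Y_{\coprod\class{K}}$ and $\Psi_{\coprod\class{K}}(\vec{x},\w)=\vec{y}$, placing $\vec{y}$ in the image as required.

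I do not foresee a genuine obstacle here: the argument is essentially a bookkeeping exercise combining Lemma~\ref{Lem:FactMap}, the surjectivity of $\Phi_{\coprod\class{K}}$ from Theorem~\ref{Theo:RevEng}, and the product decomposition of the multisorted dual of a coproduct. The only point that requires attention is matching sorts correctly, namely observing that the carrier map $\w$ witnessing membership in every $\Lambda_{\B}(y_{\B})$ must live in some single $\Omega_{\M}$, and that this common $\M$ is precisely the sort in which the assembled tuple $\vec{x}$ resides.
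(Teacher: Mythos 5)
Your proof is correct and matches the paper's intent: the paper gives no explicit argument, stating only that the corollary ``follows directly from Theorem~\ref{Theo:RevEng}'', and your unwinding via Lemma~\ref{Lem:FactMap}, the surjectivity of $\Phi_{\coprod\class{K}}$, and the identification $\fnt{D}(\coprod\class{K})_{\M}\cong\prod\{\fnt{D}(\B)_{\M}\mid\B\in\class{K}\}$ is exactly the intended route. Your closing remark about the sort $\M$ being determined by the common carrier map $\w$ is the right detail to flag.
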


We now have in place the  relationships which we can establish in general about the maps
  $\textstyle\chisub{\class{K}}\colon \coprod\fnt U(\class{K})\to \fnt U(\coprod\class{K})$, for a subset $\class{K}$ of our given quasivariety~$
\CA$.  
We 
progress to an investigation of conditions under which these maps are necessarily isomorphisms. We split the problem into two natural parts.   
We say that the functor $\fnt U$ satisfies
\begin{newlist}
\item[(S)] if for each set $\class{K}\subseteq\CA$, the map $\chisub{\class{K}}$ is surjective;
\item[(E)] if for each set $\class{K}\subseteq\CA$, the map $\chisub{\class{K}}$ is injective.
\end{newlist}

In preparation for the following results we  establish some conventions.
 Given a class of algebras $\cat{B}$, for each $\str{B}\in\cat{B}$ and $C\subseteq \B$, we write  $\langle C\rangle_{\cat{B}}$ to denote the subalgebra of $\B$ generated by $C$. 
We shall repeatedly encounter sets of subalgebras of 
some specified algebra~$\A$.  We shall always order such sets  by inclusion.
Under this assumption, the assertion that 
$Y\subseteq\ope{S}(\A)$ has a top element is equivalent to saying that $\bigcup Y$ is the universe of a subalgebra 
of~$\A$.

 Our next step is to determine when $\chisub{\class{K}}$ maps $\coprod\fnt{U}( \class{K})$ 
onto $\fnt U(\coprod\class{K})$  or, equivalently,  when $\iota_{\class{K}}$ is an order embedding.

\begin{theorem}\label{Theo:EmbeddingCoprod}  Assume that 
$\CA$ is a finitely generated $\CCD$-based quasivariety.
Assume that $\CM$ is such that $\CA = \ISP(\CM)$ and that 
$\Omega \subseteq \dotbigcup_{\M \in \CM}  \fnt{HU}(\M)$ satisfies ${\rm (Sep)}_{\CM,\Omega}$.
Then
  the following statements are equivalent:
  \begin{newlist}
    \item[{\rm (1)}] $\fnt U$ satisfies {\rm (S)},  that is, for every set  $\class{K} $ of algebras in $\CA$, the image of $\chisub{\class{ K}}$ is 
$\fnt U(\coprod\class{K})$;
    \item[{\rm (2)}] for every finite set   $\class{K}$ of disjoint copies of 
$\Free_{\CA}(1)$, 
the image of $\chisub{\class{ K}}$ is 
$\fnt U(\coprod\class{K})$;
\item[{\rm (3)}] 
for every $n\geq 1$ and every  $n$-ary term $t$ in the language $\mathcal L$  of $\CA$ there exist unary terms 
$t_1,\ldots,t_n$ 
in  
 $\mathcal L$ and  an $n$-ary term  $s$ in the language of $\CCD$ such that 
$$
t^{\A}(a_1,\ldots,a_n)=s^{\A}(t_1^{\A}(a_1),\ldots,t_n^{\A}(a_n)),
$$
 for every $\A\in\CA$ and every $a_1,\ldots,a_n\in\A$;
\item[{\rm (4)}]
for every $\A\in\CA$ and  every    bounded sublattice  $\str{L}$ of $\fnt U(\A)$, then either $L$ contains no $\CA$-subalgebra of 
$\A$  or 
$\{\,\B\in\ope{S}(\A)\mid B\subseteq L\,\}$ 
has a top element;
\item[{\rm (5)}]  for every $\w_1, \w_2\in \Omega$
it is the case that  
  $|R_{\w_1,\w_2}|\leq 1$;
\item[{\rm (6)}] for every set of algebras $\class{K}\subseteq \CA$, the map $\iota_{\class{ K}}$ is an order embedding.
\end{newlist}
\end{theorem}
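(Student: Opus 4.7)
My plan is to establish the six-way equivalence via the cycle $(2)\Rightarrow(3)\Rightarrow(4)\Rightarrow(5)\Rightarrow(6)$ together with $(1)\Leftrightarrow(6)$ and the trivial implication $(1)\Rightarrow(2)$. The equivalence $(1)\Leftrightarrow(6)$ follows directly from Priestley duality: a $\CCD$-morphism is surjective iff its dual is an order-embedding of Priestley spaces, and $\iota_{\class{K}}=\fntH(\chisub{\class{K}})$ is always continuous and order-preserving, so surjectivity of $\chisub{\class{K}}$ is equivalent to $\iota_{\class{K}}$ being an order-embedding. The implication $(1)\Rightarrow(2)$ is a trivial specialisation to $\class{K}$ consisting of copies of $\Free_{\CA}(1)$.

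For $(2)\Rightarrow(3)$: since $n$ disjoint copies of $\Free_{\CA}(1)$ have coproduct $\Free_{\CA}(n)$, hypothesis~(2) says that $\fnt U(\Free_{\CA}(n))$ is generated as a bounded distributive lattice by the $n$ embedded copies of $\fnt U(\Free_{\CA}(1))$; every $n$-ary $\mathcal{L}$-term therefore equals, in $\Free_{\CA}(n)$, a $\CCD$-combination of unary $\mathcal{L}$-terms applied to the individual variables. The required form in~(3), with exactly one unary term per variable, is then obtained by grouping, exploiting that $\wedge,\vee\in\mathcal{L}$, so meets and joins of unary $\mathcal{L}$-terms in a single variable remain unary $\mathcal{L}$-terms, and the outer $\CCD$-term $s$ can be assembled to accommodate the distinct occurrences. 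I expect this syntactic extraction to be the main technical obstacle.

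For $(3)\Rightarrow(4)$: given $\A\in\CA$ and a bounded sublattice $\str L\subseteq\fnt U(\A)$ containing some $\CA$-subalgebra, set $L'=\bigcup\{B\mid \B\in\ope S(\A),\ B\subseteq L\}$. For any $a_1,\dots,a_n\in L'$, choose subalgebras $\B_i\subseteq L$ with $a_i\in B_i$; applying~(3) to an arbitrary $n$-ary term gives $t^{\A}(a_1,\dots,a_n)=s^{\A}(t_1^{\A}(a_1),\dots,t_n^{\A}(a_n))$, with each $t_i^{\A}(a_i)\in B_i\subseteq L$ and $s$ a $\CCD$-term, so the lattice $L$ contains the result. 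Hence $\langle L'\rangle_{\CA}\subseteq L$; being a subalgebra contained in $L$, it lies inside $L'$, so $L'=\langle L'\rangle_{\CA}$ is itself a subalgebra and is the top element of the poset of subalgebras contained in $L$. The implication $(4)\Rightarrow(5)$ is then immediate by applying~(4) to $\A=\M_1\times\M_2$ and $L=(\w_1,\w_2)^{-1}(\leq)$: whenever $R_{\w_1,\w_2}$ is non-empty, the top subalgebra supplied by~(4) must be the unique maximal element of $R_{\w_1,\w_2}$, so $|R_{\w_1,\w_2}|\leq 1$.

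Finally, for $(5)\Rightarrow(6)$: using Lemma~\ref{Lem:FactMap} and the surjectivity of $\Phi_{\coprod\class{K}}$ from Theorem~\ref{Theo:RevEng}, it suffices to show that $\iota_{\class{K}}$ is order-reflecting. Suppose $\Psi_{\coprod\class{K}}(\vec x_1,\w_1)\leq\Psi_{\coprod\class{K}}(\vec x_2,\w_2)$, i.e., $\w_1\circ x^1_{\B}\leq\w_2\circ x^2_{\B}$ for every $\B\in\class{K}$. For each $\B$, the image $T_{\B}$ of $(x^1_{\B},x^2_{\B})\colon\B\to\M_1\times\M_2$ is a subalgebra of $\M_1\times\M_2$ contained in $(\w_1,\w_2)^{-1}(\leq)$, so $R_{\w_1,\w_2}$ is non-empty; by~(5) it equals $\{r\}$ for a unique $r$, and each $T_{\B}$ lies in $r$. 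Thus the image of $\coprod\class{K}$ under $(\vec x_1,\vec x_2)$, being the $\CA$-subalgebra generated by $\bigcup_{\B\in\class{K}}T_{\B}\subseteq r$, is contained in $r\subseteq(\w_1,\w_2)^{-1}(\leq)$. Translating this back via Theorem~\ref{Theo:RevEng} yields $\Phi_{\coprod\class{K}}(\vec x_1,\w_1)\leq\Phi_{\coprod\class{K}}(\vec x_2,\w_2)$, completing the order-reflection argument.
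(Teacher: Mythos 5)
Your proposal is correct and follows essentially the same route as the paper: the cycle $(1)\Leftrightarrow(6)$, $(1)\Rightarrow(2)\Rightarrow(3)\Rightarrow(4)\Rightarrow(5)\Rightarrow(6)$, with $\Free_{\CA}(n)=\coprod\class{K}$ used for $(2)\Rightarrow(3)$, the set of elements generating subalgebras inside $\str L$ for $(3)\Rightarrow(4)$, the sublattice $(\w_1,\w_2)^{-1}(\leq)$ for $(4)\Rightarrow(5)$, and Theorem~\ref{Theo:RevEng} together with Lemma~\ref{Lem:FactMap} for $(5)\Rightarrow(6)$. The one delicate point you flag in $(2)\Rightarrow(3)$ --- extracting a single unary term per variable rather than several --- is passed over silently in the paper's own proof, which simply asserts that the preimage $b$ can be written as a $\CCD$-term in one element $b_i$ from each copy of $\Free_{\CA}(1)$.
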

\begin{proof}
The equivalence of 
 (1) and  (6) is a standard fact about Priestley duality.
We obviously have (1) $\Rightarrow $  (2).

We  first  prove that 
(2) $\Rightarrow$ (3).  
Let $t$ be a $n$-ary term in the language~$\mathcal L$ and let~$\class{K}$ be a set of $n$ disjoint copies of $\Free_{\CA}(1)$.  Then $\Free_{\CA}(n)=\coprod\class{K}$. Let $a_1,\ldots,a_n\in\Free_{\CA}(n)$ denote the $n$ free generators of $\Free_{\CA}(n)$.  Here $a_i$ will also denote  the free generator of the $i$th copy of $\Free_{\CA}(1)$ in $\class{K}$. 
 We  let $a=t^{\Free_{\CA}(n)}(a_1,\ldots,a_n)$. By assumption $\chisub{\class{K}}$ maps 
 onto $\Free_{\CA}(n)$,  
so that there exists  
$b\in \coprod\fnt U(\class{K})$ such that $\chisub{\class{K}}(b)=a$. Since $\coprod\fnt U(\class{K})=\langle\, \bigcup\fnt U(\class{K})\rangle_{\CCD}$, there exist $b_i$ in the $i$th copy of $\Free_{\CA}(1)$,  for each $i\in\{1,\ldots,n\}$ and an $n$-ary term $s$ in the language of $\CCD$ such that $s^{\coprod\fnt U(\class{K})}(b_1,\ldots,b_n)=b$. Then there exist 
unary terms
$t_1,\ldots,t_n$  in the language $\mathcal L$ such that $b_i=t^{\Free_{\CA}(1)}(a_i)$. This implies that $$t^{\Free_{\CA}(n)}(a_1,\ldots,a_n)=s^{\Free_{\CA}(n)}(t_1^{\Free_{\CA}(n)}(a_1),\ldots,t_n^{\Free_{\CA}(n)}(a_n)).$$
Since $a_1,\ldots,a_n$ are the free generators of $\Free_{\CA}(n)$ it follows that the equation $t(x_1,\ldots,x_n)\approx s(t_1(x_1),\ldots,t_n(x_n))$ is valid in $\CA$.  This proves (3).

We now prove (3) $\Rightarrow$ (4). 
 Fix $\A \in \CA$. 
Let 
$\textstyle L^{\circ}=\{\,a\in\A\mid \langle a \rangle_{\CA}\subseteq L\}
$.
Certainly   
$\bigcup\,\{\,\B\in\ope{S}(\A)\mid B\subseteq L\,\}=L^{\circ}$. 
Now let  $n \geq 1$, let $a_1,\ldots,a_n\in L^{\circ}$ and let $t$
be an 
  {$n$-ary} term in the language $\mathcal L$. 
Take $t_1,\ldots, t_n$ and ~$s$ as in the statement of~(4). 
Since $t_i^{\A}(a_i)\in\langle a_i \rangle_{\CA}\subseteq L$ for each $i\in\{1,\ldots, n\}$ and $L$ is closed under the lattice operations,  we see that  $a\in L$. This proves that $\langle a_1,\ldots,a_n\rangle _{\CA}\subseteq L$ and hence that 
$\langle a_1,\ldots,a_n \rangle_{\CA}\subseteq L^{\circ}$. We conclude that $L^{\circ}$
 is the universe of a subalgebra of $\A$ and  hence that (4) holds.

To prove (4) $\Rightarrow$ (5) we only need to observe that for every $\A_1,\A_2 \in \CA$ and 
$\w_j \in \fnt{HU}(\A_j)$ for $j=1,2$, the set $(\w_1,\w_2)^{-1}(\leq)$ is a bounded sublattice of $\fnt U(\A_1\times\A_2)$.

 Now assume that (5) holds and let $\class{K}$ be a subset of $\CA$.  
 To prove that 
(6)  holds we appeal to 
Theorem \ref{Theo:RevEng}, applied  with $\A=\coprod\class{K}$,  and also Lemma \ref{Lem:FactMap}. 
   Ler $(\vec{x_1},\w_1)$,$(\vec{x_2},\w_2)\in Y_{\coprod\class{K}}$ be such that $\Psi_{\coprod \class{K}}(\vec{x_1},\w_1)\leq \Psi_{\coprod \class{K}}(\vec{x_2},\w_2)$.
Then, for each $\B\in \class{K}$, we  have  $\w_1\circ (x_1)_{\B}\leq \w_2\circ (x_2)_{\B}$. 
Applying Theorem~\ref{Theo:RevEng} to each $\B\in\class{K}$, there exists a relation 
 $r^{\B}\in R_{\w_1,\w_2}$ such that $((x_1)_{\B}, (x_2)_{\B})\in r^{\B}$ on $\fnt D(\B)$.  By assumption $|R_{\w_1,\w_2}|\leq 1$, so
 there is a single relation $r$ such that 
$r^{\fnt D(\B)}=r$ for all~$\B\in \class{K}$. 
Therefore $r^{\fnt D(\coprod \class{K})}=\prod \{\,r^{\fnt D(\B)}\mid \B\in \class{K}\,\}$. 
Then $(\vec{x_1},\vec{x_2})\in r^{\fnt D(\coprod \class{K})}$
and  so 
$(\vec{x_1},\w_1)\preceq_{Y_{\coprod \class{K}} }(\vec{x_2},\w_2)$.  Hence (6) holds.
\end{proof}

In Theorem~\ref{Theo:GeneralS} we will reveal
that the equivalence of 
(1), (2), (3), and (4) in Theorem~\ref{Theo:EmbeddingCoprod} is independent  of the fact that $\CA$ is finitely generated.

In connection with condition~(3)  in Theorem~\ref{Theo:EmbeddingCoprod}
 we should comment on when it happens that an algebra $\A$ is always
such that   for any
$\str{L}\in\ope{S}(\fnt U(\A))$ there is at least one subalgebra of~$\A$ contained in~$L$.
It is easy to see that 
this is the case 
if and only if  $\{0^{\A},1^{\A}\}$ is the universe of a subalgebra of $\ope{S}(\A)$ for each $\A\in\CA$, or equivalently  if and only if $\Free_{\CA}(\emptyset)$ has only two elements.

Theorem~\ref{Theo:EmbeddingCoprod}
tells 
 us under what 
conditions the Priestley space of a coproduct is embeddable in the product of the Priestley spaces of the original algebras. 
When  the theorem 
can be applied, we can  combine it  with Corollary~\ref{Cor:Range} to determine the subspace of the product of Priestley spaces that is isomorphic to the Priestley space of the coproduct.

In applications it is often 
simple 
to verify or to refute  
condition (5) in Theorem~\ref{Theo:EmbeddingCoprod}.  
But we note also that, under restricted conditions on the operations in $\CA$ and the equations these satisfy, the syntactic condition~(3) may also be simple to check.
An instance is provided by \cite[Lemma~3.5]{DP87}; a slightly more general version, though still restricted to  operations which are at most unary, is given by Talukder~\cite[Lemma~6.1.2]{Ta02}.  In both cases direct proofs were given of
the existence of a unique element in a set of the form $R_{\w_1,\w_2}$
as part of the work needed to set up certain piggyback dualities.  
For use later we record here a minor variant of these
earlier results.
We stress that  the  scope of 
our syntactic condition (3) is much wider;  in particular we have been able to remove the
restriction to operations of arity at most~$1$.

\begin{lemma} \label{unique-max}  Let  $\CA$ be a $\CCD$-based quasivariety such that 
 the members of  $\CA$ take the form $\A = (A,\land,\lor,0,1,\{f_i\mid i\in I\})$, where $I$ is a finite set
and each~$f_i$ is a lattice endomorphism or a dual lattice endomorphism of $\fnt U(\A)$.  Then,  
for any  $\A\in\CA$ and any  bounded sublattice $\str{L}$ of $\fnt U(\A)$, either $\str{L}$ contains no subalgebra of~$\A$ or 
 $\{\, \B\in\ope{S}(\A)\mid B\subseteq L\,\}$ has a top element. Moreover, if each~$f_i$ preserves the bounds~$0$ and~$1$,
then $\{\B\in\ope{S}(\A)\mid B\subseteq L\}$ is non-empty and so has a top element.
\end{lemma}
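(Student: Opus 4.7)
The plan is to exhibit a concrete candidate for the top element of the family $\{\,\B\in\ope{S}(\A)\mid B\subseteq L\,\}$ and to show it is the universe of a subalgebra of $\A$. I would set
\[
L^{\circ}=\{\,a\in A\mid \langle a\rangle_{\CA}\subseteq L\,\}
\]
and note that $L^{\circ}$ is precisely the union of all subalgebras of $\A$ whose underlying sets lie in $L$: one inclusion is witnessed by $\langle a\rangle_{\CA}$ itself, and the other is trivial. Consequently, once $L^{\circ}$ is shown to be closed under all fundamental operations of $\A$, it is automatically the top element of that family.

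Closure of $L^{\circ}$ under each $f_i$ is immediate from the definition, since $\langle f_i(a)\rangle_{\CA}\subseteq\langle a\rangle_{\CA}\subseteq L$ whenever $a\in L^{\circ}$. The substantive step is closure under $\wedge$ and $\vee$, and for this I plan to prove the following subsidiary claim: whenever $\B,\C\in\ope{S}(\A)$ both satisfy $B,C\subseteq L$, the bounded sublattice of $\fnt U(\A)$ generated by $B\cup C$, which I denote $B\vee C$, is again the universe of a subalgebra of $\A$. Granted the claim, applying it with $\B=\langle a\rangle_{\CA}$ and $\C=\langle b\rangle_{\CA}$ for $a,b\in L^{\circ}$ places both $a\wedge b$ and $a\vee b$ inside a subalgebra of $\A$ contained in $L$, so both lie in $L^{\circ}$.

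The subsidiary claim is where the hypothesis on the $f_i$ enters, and is the main technical obstacle. By distributivity, every element of $B\vee C$ has a representation as a finite join of finite meets of elements of $B\cup C$ (any empty join or meet is absorbed via the fact that $0,1\in\langle\emptyset\rangle_{\CA}\subseteq B\cap C$). When $f_i$ is applied to such a representation, the assumption that $f_i$ is either a lattice or a dual lattice endomorphism of $\fnt U(\A)$ permits it to be pushed past $\wedge$ and $\vee$, possibly interchanging the two; the result is a finite join of finite meets whose entries have the form $f_i(x)$ with $x\in B\cup C$, and each such $f_i(x)$ already lies in $B\cup C$ because $B$ and $C$ are closed under $f_i$. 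Hence $B\vee C$ is closed under every $f_i$, and the claim follows.

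For the final assertion, observe that if each $f_i$ preserves $0$ and $1$, then the two-element set $\{0^{\A},1^{\A}\}$ is closed under every fundamental operation of $\A$ and is therefore itself the universe of a subalgebra; since any bounded sublattice $L$ of $\fnt U(\A)$ contains $\{0^{\A},1^{\A}\}$, the family $\{\,\B\in\ope{S}(\A)\mid B\subseteq L\,\}$ is non-empty and the first part of the lemma supplies the top element.
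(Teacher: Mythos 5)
Your proof is correct. Note that the paper itself does not prove this lemma: it records it as ``a minor variant'' of \cite[Lemma~3.5]{DP87} and \cite[Lemma~6.1.2]{Ta02} and moves on, so the only internal point of comparison is the implication (3)~$\Rightarrow$~(4) in Theorem~\ref{Theo:EmbeddingCoprod}, which uses the same device $L^{\circ}=\{a\in A\mid \langle a\rangle_{\CA}\subseteq L\}$ but derives closure of $L^{\circ}$ under the operations from the \emph{syntactic} hypothesis that every term decomposes as a $\CCD$-term applied to unary terms. You instead prove closure directly from the semantic hypothesis on the $f_i$: your subsidiary claim that the bounded sublattice generated by the union of two subuniverses $B,C\subseteq L$ is again a subuniverse is exactly where the (dual) endomorphism assumption does its work, since $f_i$ applied to a join-of-meets over $B\cup C$ is again a lattice polynomial in elements $f_i(x)\in B\cup C$. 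All the small points check out: $L^{\circ}$ is the union of the family by the two inclusions you give (so, by the paper's own convention, it is the top element once it is a subuniverse); $0^{\A},1^{\A}\in L^{\circ}$ whenever the family is non-empty; $B\vee C\subseteq L$ because $\str L$ is a bounded sublattice containing $B\cup C$; and the final assertion is immediate from $\{0^{\A},1^{\A}\}$ being a subuniverse contained in every bounded sublattice. In effect your argument is a self-contained verification of condition (3) of Theorem~\ref{Theo:GeneralS} in disguise (one could alternatively push each $f_i$ through a term into disjunctive normal form and invoke that theorem), and it buys a proof the paper delegates to the literature; the paper's route via condition (3) buys greater generality, applying to operations of arbitrary arity rather than only unary (dual) endomorphisms.
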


Our next result,   Theorem~\ref{Theo:OntoCop}, 
 gives 
necessary and sufficient conditions for~$\fnt U$ to satisfy (E), that is, for $\chisub{\class{K}}$ to be an embedding, or equivalently, for $\iota_{\class{K}}$ to be surjective, for each $\class{K}\subseteq \CA$. 
In order to present item (4)  in the theorem, 
 we need 
 to recall some definitions and facts about quasivarieties (for 
details 
see 
\cite[Chapter~3]{Gor}). 
Let $\cat{Q}$ be a quasivariety. For each $\A\in\cat Q$,  let $\text{Con}_{\cat Q}(\A)$ denote the set of congruences 
$\theta $ on $\A$  for which 
$\A/\theta\in\cat Q$; we order $\text{Con}_{\cat Q}(\A)$ by inclusion.   
An algebra $\B\in\cat Q$ is said to be \defn{subdirectly irreducible relative to $\cat Q$} if whenever $\B$ is isomorphic to a subdirect product of algebras in $\cat Q$,
 then it is isomorphic to at least one of the components in the product.  
Let $\text{Si}(\cat Q)$ denote the class of subdirectly irreducible algebras relative to $\cat Q$.  
An algebra~$\B$ belongs to $\text{Si}(\cat Q)$  if and only if $\B\in\cat Q$ and $\Delta_{\B}\neq\bigcap(\text{Con}_{\cat Q}(\B)\setminus\{\Delta_{\B}\})$, where $\Delta_{\B}=\{\,(b,b)\mid b\in \B\,\}$.
Moreover, since $\cat Q$ is determined by a set of quasi-equations, it follows that,  if $D$ is an up-directed subset of $\text{Con}_{\cat Q}(\A)$, then $\bigcup D\in \text{Con}_{\cat Q}(\A)$.
As a consequence, 
for each $\A\in\cat Q$ and each $a,b\in \A$ such that $a\neq b$,  the set $\{\,\theta\in\text{Con}_{\cat Q}(\A)\mid (a,b)\notin\theta\,\}
$ has a maximal element.
Then every algebra in $\cat Q$ is a subdirect product of algebras in $\text{Si}(\cat Q)$. If $\cat Q=\ISP(\class{N})$, where $\class{N}$ is a finite set of finite algebras $\class{N}$, then $\text{Si}(\cat Q)\subseteq \ope{IS}(\class{N})$. 

Below we refer to an algebra  as being
\defn{non-trivial} if its has at least two elements.
We denote by $\CA_{\text{fin}}$ the class of finite algebras in~$\CA$. 

\begin{theorem}\label{Theo:OntoCop} 
 Let $\CA$ be a finitely generated
$\CCD$-based quasivariety.   
 Then the  following  statements are equivalent:
  \begin{newlist}
    \item[{\rm (1)}] $\fnt U$ satisfies 
{\upshape 
(E)}, that is, for every set  $\class{K}$ of algebras in $\CA$, the map  $\chisub{\class{K}}$ is injective;
  \item[{\rm (2)}] there exists a finite set of finite algebras $\CM$ such that  
\begin{newlist}
\item[{\rm (i)}] $\CA = \ISP(\CM)$;
\item[{\rm (ii)}] for every set $\class{K}$ of disjoint copies of algebras in $\CM$, the map  $\chisub{\class{K}}$ is injective;
\end{newlist}
\item[{\rm (3)}] there exists $\M \in \CA_{\text{\rm  fin}}$ and $\w \in
\fnt{HU}(\M) $ such that 
\begin{newlist}
\item[{\rm (i)}] $\CA = \ISP(\M)$;
\item[{\rm (ii)}] $\text{\rm (Sep)}_{\M, \w}$ holds;
\end{newlist}
  \item[{\rm (4)}] there exists $\M \in \text{\rm Si}(\CA)$ and $\w \in
\fnt{HU}(\M) $ such that 
\begin{newlist}
\item[{\rm (i)}] $\CA = \ISP(\M)$ and 
\item[{\rm (ii)}] $\text{\rm (Sep)}_{\M, \w}$ holds;
\end{newlist}
    \item[{\rm (5)}]  for every set of algebras $\class{K}\subseteq \CA$, the image of the map  $\iota_{\class{K}}$   is $\prod\fntH\fnt U(\class{K})$.
\end{newlist}
\end{theorem}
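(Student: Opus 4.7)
The plan is to establish $(1) \Leftrightarrow (5)$ directly from Priestley duality and to close the equivalence among $(1)$, $(2)$, $(3)$, $(4)$ via the chain $(4) \Rightarrow (3) \Rightarrow (1) \Rightarrow (4)$ together with $(1) \Rightarrow (2) \Rightarrow (3)$. The biconditional $(1) \Leftrightarrow (5)$ is standard: $\iota_{\class{K}} = \fntH(\chisub{\class{K}})$, and $\fntH$ carries injective $\CCD$-morphisms to surjective $\CP$-morphisms. The implication $(4) \Rightarrow (3)$ is immediate because finite generation of $\CA$ forces $\text{Si}(\CA) \subseteq \ope{IS}(\CM_0)$ for any finite generating set $\CM_0$, whence every subdirectly irreducible algebra in $\CA$ is finite. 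For $(3) \Rightarrow (1)$, take $\CM = \{\M\}$ and $\Omega = \{\w\}$; Theorem~\ref{genpig} yields a duality, the Joint Surjectivity Lemma forces $\Lambda_{\B}(y) = \{\w\}$ for every $\B \in \CA$ and every $y \in \fntH\fnt U(\B)$, and Corollary~\ref{Cor:Range} then makes $\iota_{\class{K}}$ surjective for every $\class{K}$. Finally $(1) \Rightarrow (2)$ is immediate from finite generation.

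The core of the argument is a single \emph{extraction step} used to prove both $(2) \Rightarrow (3)$ and $(1) \Rightarrow (4)$. Suppose $\CN$ is a finite set of finite algebras with $\CA = \ISP(\CN)$ and suppose $\chisub{\class{K}}$ is injective whenever $\class{K}$ is a family of disjoint copies of members of $\CN$. Set $\Omega_{\M} = \fntH\fnt U(\M)$ for each $\M \in \CN$, so that $\text{(Sep)}_{\CN, \Omega}$ holds automatically. Form a particular $\class{K}$ by taking, for each pair $(\M, y)$ with $\M \in \CN$ and $y \in \fntH\fnt U(\M)$, a distinct disjoint copy of $\M$; let $\vec{y}$ be the tuple whose coordinate on the copy indexed by $(\M, y)$ equals~$y$. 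Then $\chisub{\class{K}}$ is injective by hypothesis, so $\iota_{\class{K}}$ is surjective, and Corollary~\ref{Cor:Range} applied to $\vec{y}$ yields
\[
\textstyle \bigcap \{\, \Lambda_{\M}(y) \mid \M \in \CN,\ y \in \fntH\fnt U(\M)\,\} \neq \emptyset.
\]
Choose $\w^*$ in this intersection; then $\w^* \in \Omega_{\M_0}$ for some $\M_0 \in \CN$. For any $\M \in \CN$ and any distinct $a, b \in \M$, Priestley duality supplies $y \in \fntH\fnt U(\M)$ with $y(a) \neq y(b)$, and $\w^* \in \Lambda_{\M}(y)$ produces $x \in \CA(\M, \M_0)$ with $y = \w^* \circ x$, whence $x(a) \neq x(b)$. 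Hence each $\M \in \CN$ embeds into a power of $\M_0$, giving $\CA = \ISP(\M_0)$; specialising to $\M = \M_0$ yields $\text{(Sep)}_{\M_0, \w^*}$.

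The two deployments of the extraction step close the cycle. For $(2) \Rightarrow (3)$, apply it with $\CN$ equal to the $\CM$ supplied by~(2); condition~(2)(ii) provides exactly the required injectivity, and the extracted $\M_0 \in \CN \subseteq \CA_{\text{fin}}$ together with $\w^*$ witnesses~(3). For $(1) \Rightarrow (4)$, apply it with $\CN = \text{Si}(\CA)$, which is finite by finite generation and satisfies $\CA = \ISP(\text{Si}(\CA))$, so $\M_0 \in \text{Si}(\CA)$ delivers~(4). The principal obstacle is the extraction step itself: the crucial combinatorial idea is to index $\class{K}$ by \emph{all} pairs $(\M, y)$, which converts the global intersection condition of Corollary~\ref{Cor:Range} into a single carrier map $\w^*$ serving every point of every $\fntH\fnt U(\M)$ simultaneously.
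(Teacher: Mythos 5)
Your proposal is correct and follows essentially the same route as the paper: the ``extraction step'' you isolate is precisely the paper's proof of $(2)\Rightarrow(3)$ (there the special set $\class{K}$ of copies $\M_\w$ is built directly and the carrier map $\w_0$ is read off from a preimage under $\iota_{\class{K}}$, which is equivalent to your use of Corollary~\ref{Cor:Range}), and the paper likewise reuses it for $(1)\Rightarrow(4)$ with $\class{N}=\text{Si}(\CA)\cap\ope{S}(\CM)$ in place of your $\text{Si}(\CA)$ (a harmless difference, since one only needs representatives up to isomorphism). The remaining implications ($(1)\Leftrightarrow(5)$, $(4)\Rightarrow(3)$, $(3)\Rightarrow(1)$ via Theorem~\ref{genpig}, Lemma~\ref{jointsurj} and Corollary~\ref{Cor:Range}) match the paper's argument exactly.
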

\begin{proof} 

 The equivalence of 
(1)  and (5) is a standard fact about Priestley duality and 
the implication
  (1) $\Rightarrow$ (2)  holds trivially. Since $\CA$ is finitely generated,
each $\M\in\text{\rm Si}(\CA)$ is finite. Therefore (4) $\Rightarrow$ (3). 
To complete the proof we establish that   (2) $\Rightarrow$ (3)
$\Rightarrow$ 
(5) 
and  (1)
 $\Rightarrow$ (4).

Assume (2). 
Let $\class{K}$ be a set of disjoint copies of non-trivial algebras 
from~$\CM$ which, for each $\M \in \CM$,  contains one copy $\M_{\w}$ of $\M$ for each $\w\in\fnt{HU}(\M)$. 
Observe that the assumption that $\M$ is non-trivial ensures (it is actually equivalent to)  $\fnt{HU}(\M)\neq\emptyset$.
By hypothesis,  $\iota_{ \class{K}}$ maps $\fntH\fnt U(\coprod \class{K})$ onto  
$\prod\fntH\fnt U( \class{K})$. Let $\vec{y}\in\prod\fntH\fnt U( \class{K})$ be such that the $\M_{\w}$-coordinate
 of $\vec{y}$ is~$\w$. 
Then there exists $(\vec{x},\w_0)\in Y_{\coprod \class{K}}$, where $Y_{\coprod \class{K}}$ is 
 as in Theorem~\ref{Theo:RevEng}, 
such that
$$
    \iota_{ \class{K}}(\Phi_{\coprod \class{K}}(\vec{x},\w_0))=\vec{y}.
$$
That is, 
$\w_0\circ x_{\M_{\w}}=\w$ for each non-trivial $\M\in\CM$ and each $\w\in\Omega_{\M}$.  

Let $\M_0\in \CM$ be the algebra such that $\w_0\in\Omega_{\M_0}$.
Now assume ${\M\in\CM}$ and 
let 
$a,b\in\M$  be such that $a\neq b$. Then 
there exists
 $\w\in\fnt{HU}(\M)$ such that $\w(a)\neq \w(b)$. Therefore $x_{\M_{\w}}\in\CA(\M,\M_0)$ satisfies 
\[\w_0\circ x_{\M_{\w}}(a)=\w(a)\neq\w(b)=\w_0\circ x_{\M_{\w}}(b).\]
This proves
 that $\M\in\ISP(\M_0)$ for each $\M\in\CM$ and 
hence   
$\CA=\ISP(\M_0)$. 
In addition, 
putting $\M=\M_0$ we see that ${\rm (Sep)}_{\M_0,\omega}$ holds.

We now prove that
(3) $\Rightarrow$ (5). 
Let $\CM=\{\M\}$ and $\Omega=\{\w\}$. By assumption, $\CA=\ISP(\M)$ and  $\text{(Sep)}_{\M, \w}$ holds.
Then we can
 apply Theorem~\ref{genpig}.  Now, let~$\class{K} \subseteq \CA$. 
 By Lemma~\ref{jointsurj},
$\{\w\}=\Lambda_{\B}(f)$ for each $\B\in\class{K}$ and each $f\in\fntH\fnt U(\B)$. By  Corollary~\ref{Cor:Range}, 
\[
\textstyle
\iota_{\class{K}}\bigl(
\{\,\vec{y}\in\prod\fntH\fnt U(\class{K})\mid \bigcap\{\,\Lambda_{\B}(y_{\B})\mid \B\in \class{K}\,\}\neq\emptyset\,\}\bigr)  =\prod\fntH\fnt U(\class{K}).
\]

Finally we prove  (1) $\Rightarrow$ (4). Let $\CM$ be a finite set of algebras such that ${\CA=\ISP(\CM)}$.
 Let $\class{N}=\text{Si}(\CA)\cap \ope{S}(\CM)$.  Clearly
$\class{N}$ is
 a finite set of finite algebras. Since $\CA=\ISP(\CM)$ we have, 
on the one hand, $\CA=\ISP(\class{N})$. By assumption
$\fnt U$ satisfies condition (E), and in particular for every set $\class{K}$ of disjoint copies of algebras in $\class{N}$, the map  $\chisub{\class{K}}$ is injective. Now the proof of 
the implication 
(2) $\Rightarrow$ (3) above, applied 
to  $\class{N}$, implies that
 there exists $\M\in\class{N}\subseteq{\rm Si}(\CA)$ and $\w\in\fnt{HU}(\M)$
 satisfying (i) and (ii) in (4).
\end{proof}

We now 
present 
our main theorem, as advertised in Section~\ref{Sec_Intro}. 
This amalgamates 
the results from Theorems~\ref{Theo:EmbeddingCoprod} and  \ref{Theo:OntoCop}.

\begin{theorem}\label{MainTheo}  {\upshape 
(Coproduct Preservation Theorem)} 
Let~$\CA$ be a 
finitely generated $\CCD$-based quasivariety and $\fnt{U} \colon \CA \to \CCD $ the associated forgetful functor. 
 Then the following statements are equivalent:
\begin{newlist}
\item[{\rm (A)}] $\fnt U\colon\CA\to\CCD$ preserves coproducts; 
\item[{\rm (B)}] 
the following conditions hold:
\begin{newlist}
\item[{\rm (i)}] there exist a finite algebra 
$\M\in\CA$ and 
$\w \in \CCD(\fnt{U}(\M),\two)$ 
 such that
  $\CA=\ISP(\M)$ and $\text{\rm (Sep)}_{\M,w}$ holds
{\upshape(}that is,  for all $a,b\in \M$, if $\w (u(a)) =\w (u(b))$ for each $u\in\CA(\M,\M)$, then $a=b${\upshape)},
\item[{\rm (ii)}] for every $\A\in\CA$ and every  bounded sublattice 
$\str{L}$ of $\fnt U(\A)$ 
 the subposet 
$$
\{\, \C\in\ope{S}(\A)\mid C\subseteq L\, \} 
$$
of $\ope{S}(\A)$ is empty or has a top element;
\end{newlist}

\item[{\rm (C)}] there exists a finite algebra $\M\in\CA$ which is subdirectly irreducible  relative to $\CA$ and 
there exists
$\w\in \CCD(\fnt U(\M),\two)$ such that
\begin{newlist}
\item[{\rm (i)}] 
every algebra which is subdirectly irreducible relative to $\CA$
belongs to $\ope{IS}(\M)$,
\item[{\rm (ii)}] for every $a,b\in \M$, if $\w (u(a)) =\w (u(b))$ for each $u\in\CA(\M,\M)$, then $a=b$,
\item[{\rm (iii)}] the subposet 
$$
\{\,\C\in\ope{S}(\M^2)\mid \forall (c_1,c_2)\in\C, \w(c_1)\leq\w(c_2) \,\}
$$
of $\ope{S}(\M^2)$
has a 
top 
element.

\end{newlist}
\end{newlist}
\end{theorem}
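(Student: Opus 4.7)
The plan is to deduce the theorem by combining Theorems~\ref{Theo:EmbeddingCoprod} and~\ref{Theo:OntoCop}, which respectively characterise when the comparison map $\chisub{\class{K}}$ is surjective (property (S)) and when it is injective (property (E)) for every $\class{K}\subseteq \CA$. The first, routine, observation is that (A) is equivalent to the conjunction of (E) and (S), since a $\CCD$-homomorphism is an isomorphism precisely when it is both injective and surjective, and $\chisub{\class{K}}$ is the canonical $\CCD$-morphism witnessing preservation of the coproduct of $\class{K}$.

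For the equivalence (A)~$\Leftrightarrow$~(B), I would simply match the two halves of (B) against the characterisations already established. Condition (B)(i) is exactly item~(3) of Theorem~\ref{Theo:OntoCop}, and so is equivalent to (E). Condition (B)(ii) is exactly item~(4) of Theorem~\ref{Theo:EmbeddingCoprod}, and so is equivalent to (S). The conjunction (B)(i) $\wedge$ (B)(ii) is therefore equivalent to (A).

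For (A)~$\Leftrightarrow$~(C), I would first recognise that (C)(i) together with (C)(ii) form item~(4) of Theorem~\ref{Theo:OntoCop}, namely the existence of a subdirectly irreducible generator $\M$ equipped with a single carrier map $\w$ such that $\text{(Sep)}_{\M,\w}$ holds; this is equivalent to (E). Granted (E), we may take $\CM=\{\M\}$ and $\Omega=\{\w\}$, and then condition~(5) of Theorem~\ref{Theo:EmbeddingCoprod} collapses to the single requirement $|R_{\w,\w}|\leq 1$. By the definition of $R_{\w,\w}$ given before Theorem~\ref{genpig}, this set consists of the subalgebras of $\M\times\M$ that are maximal for inclusion within
\[
(\w,\w)^{-1}(\leq)=\{(c_1,c_2)\in M^2\mid \w(c_1)\leq \w(c_2)\},
\]
i.e.\ the maximal elements of the poset $P$ appearing in (C)(iii). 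Since the diagonal subalgebra $\Delta_{\M}$ lies in $P$ and $\M\times\M$ has only finitely many subalgebras, $P$ is non-empty and finite; thus $P$ has a top element if and only if it has a unique maximal element, that is, if and only if $|R_{\w,\w}|=1$. Combined with (E), this shows that (C)(iii) is equivalent to (S), and so (C) is equivalent to (A).

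The main obstacle, or rather the only point requiring care, lies in the translation of (C)(iii) into the language of $R_{\w,\w}$: the rest of the argument is pure bookkeeping against the two prior theorems. The delicate observation is that the edge case $|R_{\w,\w}|=0$ is automatically ruled out by the diagonal, so that the non-strict inequality in condition~(5) of Theorem~\ref{Theo:EmbeddingCoprod} can be re-expressed as an equality and matched to the existence of a top element, rather than merely a unique maximal element, of the poset in~(C)(iii).
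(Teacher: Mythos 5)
Your proposal is correct and follows essentially the same route as the paper: both reduce (A) to the conjunction of (E) and (S), match (B)(i) to Theorem~\ref{Theo:OntoCop}(3) and (B)(ii) to Theorem~\ref{Theo:EmbeddingCoprod}(4), and identify (C)(iii) with the assertion $|R_{\w,\w}|=1$ via Theorem~\ref{Theo:EmbeddingCoprod}(5). Your explicit justification that the diagonal subalgebra rules out $|R_{\w,\w}|=0$, so that ``top element'' and ``unique maximal element'' coincide for this finite non-empty poset, is a point the paper leaves implicit (it is noted only in passing after Corollary~\ref{Cor-to-RevEng}), but it is the same argument.
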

\begin{proof}
The functor $\fnt U$ preserves coproducts if and only if it satisfies conditions~(E) and (S).
Condition (B)(i) is exactly item (3) in Theorem~\ref{Theo:OntoCop}. Then~$\fnt U$ satisfies (E) if and only if  (B)(i) holds. 
Likewise,  item (4) in Theorem~\ref{Theo:EmbeddingCoprod} tells us that $\fnt U$ satisfies (S) if and only if 
(B)(ii) holds. 

Assume that (A) holds. 
Observe that it is a 
straightforward consequence of 
Theorem~\ref{Theo:OntoCop} that 
(C)(i) and (C)(ii) hold. 
Also Theorem~\ref{Theo:EmbeddingCoprod}  implies that  (C)(iii) holds.
We have  proved that (A) $\Rightarrow$ (C).

To prove (C) $\Rightarrow$ (A), we proceed as follows.  
Let us fix $\CM=\{\M\}$ and $\Omega=\{\w\}$. Condition (C)(i) 
gives 
 $\CA=\ISP(\M)$, and condition (C)(ii) implies that ${\rm (Sep)}_{\M,\w}$ holds. Then, by Theorem~\ref{Theo:OntoCop}, conditions (C)(i)--(ii) imply that~$\fnt U$ satisfies (E). Now observe that (C)(iii) is equivalent to 
the assertion  that $|R_{\w,\w}|=1$. Since $\CA=\ISP(\M)$, and ${\rm (Sep)}_{\M,\w}$ holds,  Theorem~\ref{Theo:EmbeddingCoprod} implies that $\fnt U$ satisfies (S).
\end{proof}

The equivalence of  (A) and (B) 
spells out the characteristic  
properties  of the class $\CA$ that hold if and only if 
 $\fnt U$ preserves coproducts.
More precisely, it tells us how $\CA$ interacts with the subclass 
$\fnt{U}(\CA)$  of $\CCD$. 
The equivalence of~(A) and (C) brings  benefits of a different kind. 
Assume that 
 we are presented with a finite family of finite algebras $\CM$ such that $\CA=\ISP(\CM)$ is $\CCD$-based.  Then
condition~$(C)$  
enables us to assert that the decision   
problem ``$\fnt U$ preserves coproducts'' is decidable. 
To see this, 
first note that
${\rm Si}(\CA)\subseteq\ope{IS}(\CM)$. Then~(C) implies that we only need to check (i)--(iii) on the finite set of pairs $(\M,\w)$ where $\M\in\ope{S}(\CM)$ and $\w\in \fnt{HU}(\M)$. 
Having $\CM$ to hand,  to check (C)(i) on a particular pair $(\M,\w)$ amounts  to proving that  $\str{N}\in\ISP(\M)$ for each $\str{N}\in\CM$. This  is decidable since each $\str{N}\in\CM$ is finite. This, together with the fact that Items (C)(ii)--(iii) are clearly decidable for each pair $(\M,\w)$, 
confirms 
our decidability claim concerning preservation of coproducts by~$\fnt U$.
We remark that the restriction to subdirectly irreducible algebras 
is not itself pertinent to the  decidability question.  What is essential is that
$\CA$ should be generated by a single finite algebra~$\M$.
Moreover, since $\text{Si}(\ISP(\CM))\subseteq\ope{IS}(\CM)$, the finite
algebra $\M$  can  be chosen  to be a member of  $\CM$. 
There are two main reasons for~(C) to be formulated as it is. 
Firstly, it reflects the way the required property  would customarily be verified in practice. Secondly,
maximal subdirectly irreducible algebras of $\CA$ are absolute retracts within $\CA$,
and hence, if $\M\in\text{Si}(\CA)$ satisfies (C)(i), then $\M$ is a retract of any $\A\in\CA$ satisfying  (C)(i). Therefore, performing a full check of (C)(ii)--(iii) is faster in a subdirectly irreducible algebra satisfying (C)(i) than in a non-subdirectly irreducible one.

Up to this point, we have used natural dualities to study the
behaviour 
 of coproducts in  finitely generated $\CCD$-based quasivarieties. 
 Theorems~\ref{Theo:EmbeddingCoprod} and \ref{Theo:OntoCop} can  also be used in the reverse direction.  
By this we mean that 
the type  of natural duality that we can obtain for a given  class of algebras is governed  by the 
properties of coproducts  in that class. 
Most of the information we require  in order to make this  assertion precise 
can be extracted from what we have proved already, but before we can reveal the full 
picture we need to recall some definitions.  

A coproduct in a class $\CA$ of a set $\class{K}$ of non-trivial 
algebras in  $\CA$ is 
 a \defn{free product}
if the universal co-cone $\{\epsilon_{\A}\colon \A\to\coprod\class{K}\mid \A\in\class{K}\}$ is such that each~$\epsilon_{\A}$ is an embedding. When this is the case  for every such $\class{K}$, then $\CA$ is said \defn{to admit 
free products}. 
A class of algebras $\CA$ is said to have the \emph{embedding property} if for each set of non-trivial algebras $\class{K}$ in $\CA$ there exists an algebra
into which  all the algebras in $\class{K}$ can be embedded. In the case that a class of algebras admits coproducts, the two properties---%
admitting free products and the embedding property---are 
known to be 
equivalent.

Observe that $\CCD$ admits free products. Therefore, if $\CA$ is a class of $\CCD$-based algebras
 such that $\fnt U$ satisfies (E), then $\CA$ also admits free products.
 In Theorem~\ref{Theo:OntoCop}
we 
 proved that,  given a finitely generated $\CCD$-based quasivariety $\CA$, the functor $\fnt U$ satisfies (E) if and only if there exists a finite algebra generating $\CA$, together 
 with a particular
lattice homomorphism 
from (the reduct of) this algebra
into $\two$. This shows 
that there is a
deep connection between free products and condition (E), since
a quasivariety admits free products (or, equivalently, has the embedding property) if and only if it is generated by a single algebra
 \cite[Proposition~2.1.19]{Gor}.
By modifying part of the argument used to prove  Theorem~\ref{Theo:OntoCop} we can 
 present a different proof of the equivalence between free products and single generation for the case of $\CCD$-based quasivarieties
which 
 reveals more overtly 
the  connection between piggyback dualities and free products.

\begin{theorem}\label{Theo:FreeProd}
Let $\CA$ be a finitely generated $\CCD$-based quasivariety. Then $\CA$ admits free products if and only if there exists a {\upshape(}finite{\upshape)} algebra $\M\in\CA$ such that $\CA=\ISP(\M)$. 
\end{theorem}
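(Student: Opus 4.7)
The plan is to prove both implications, using the natural-duality perspective that relates coproducts in $\CA$ to products in its dual. For the easy direction, the key observation is that when $\CA$ is generated by a single finite algebra $\M$, every non-trivial $\B\in\CA$ has $\CA(\B,\M)\neq\emptyset$, and this is precisely the data the universal property of coproducts needs in order to realise each $\epsilon_{\B}$ as an embedding. The harder direction (free products $\Rightarrow$ single generation) will be handled by choosing a finite generating set $\CM=\{\M_{1},\ldots,\M_{n}\}$, forming their coproduct in $\CA$, and using the free-product embeddings $\epsilon_{\M_{i}}$ together with the representation of this coproduct as a subalgebra of a product of $\CM$-algebras to distil a single finite generator.

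For $(\Leftarrow)$, assume $\CA=\ISP(\M)$ with $\M$ finite. Given a set $\class{K}$ of non-trivial algebras of $\CA$, a target $\B\in\class{K}$, and elements $a\neq b$ in $\B$, the embedding of $\B$ into a power of $\M$ supplies $x\in\CA(\B,\M)$ with $x(a)\neq x(b)$. For every other $\B'\in\class{K}$, non-triviality and membership in $\ISP(\M)$ guarantee that $\CA(\B',\M)\neq\emptyset$, so pick any $y_{\B'}\in\CA(\B',\M)$ and set $y_{\B}=x$. By the universal property of $\coprod\class{K}$, there exists a homomorphism $\phi\colon\coprod\class{K}\to\M$ with $\phi\circ\epsilon_{\B'}=y_{\B'}$ for every $\B'\in\class{K}$; then $\phi(\epsilon_{\B}(a))=x(a)\neq x(b)=\phi(\epsilon_{\B}(b))$, so $\epsilon_{\B}$ is injective. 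Hence every coproduct injection is an embedding, and $\CA$ admits free products.

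For $(\Rightarrow)$, assume $\CA$ admits free products. Since $\CA$ is finitely generated, fix a finite set $\CM=\{\M_{1},\ldots,\M_{n}\}$ of finite non-trivial algebras with $\CA=\ISP(\CM)$, and let $\A=\coprod_{i=1}^{n}\M_{i}$. By hypothesis each $\epsilon_{\M_{i}}\colon\M_{i}\to\A$ is an embedding. As $\A\in\ISP(\CM)$, fix an embedding $h\colon\A\hookrightarrow\prod_{\alpha\in I}\M_{i(\alpha)}$. For each $i$, the composite $h\circ\epsilon_{\M_{i}}$ is an injective homomorphism from the \emph{finite} algebra $\M_{i}$, so only finitely many coordinates are needed to separate its points; choose a finite $F_{i}\subseteq I$ witnessing this injectivity. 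Put $F=\bigcup_{i=1}^{n}F_{i}$ (still finite) and $\M^{*}=\prod_{\alpha\in F}\M_{i(\alpha)}$. Then $\M^{*}$ is a finite algebra in $\CA$, and for every $i$ the coordinate homomorphisms of $h\circ\epsilon_{\M_{i}}$ indexed by $F$ (available because $F\subseteq I$) assemble into a homomorphism $\M_{i}\to\M^{*}$ which is injective because $F\supseteq F_{i}$. Therefore $\CM\subseteq\ope{IS}(\M^{*})$, giving $\CA=\ISP(\CM)\subseteq\ISP(\M^{*})\subseteq\CA$, and so $\CA=\ISP(\M^{*})$.

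The main obstacle is the $(\Rightarrow)$ direction: a naive attempt to take $\M^{*}=\prod_{\M\in\CM}\M$ fails because it requires a homomorphism $\M_{i}\to\M_{j}$ for every pair, which need not exist in general. The remedy is to anchor the entire construction inside one chosen embedding $h\colon\A\hookrightarrow\prod_{\alpha\in I}\M_{i(\alpha)}$ arising from $\A\in\ISP(\CM)$: the coordinate projections of $h\circ\epsilon_{\M_{i}}$ automatically supply the homomorphisms into every coordinate in $F$, including those outside $F_{i}$. This is where the piggyback connection surfaces: the coproduct $\A$ packages all generators simultaneously, and its dual-side incarnation as a cartesian product of multisorted hom-spaces (as in Theorem~\ref{Theo:RevEng}) is exactly what lets finitely many coordinates realise a single finite generator $\M^{*}$.
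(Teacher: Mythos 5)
Your proof is correct, but both directions depart from the paper's argument. For $(\Leftarrow)$ the paper works on the dual side: it translates ``$\epsilon_{\B}$ is an embedding'' into surjectivity of $\fntH\fnt U(\epsilon_{\B})$ and then uses Theorem~\ref{Theo:RevEng} and the commuting diagram of Lemma~\ref{Lem:FactMap} to realise each $h\in\fntH\fnt U(\B)$ as $\w\circ x_{\B}$ for a tuple $\vec x$ whose other coordinates are chosen arbitrarily in the non-empty hom-sets $\CA(\A,\M)$. Your argument performs exactly the same combinatorial move (separate $a\neq b$ by some $x\in\CA(\B,\M)$, fill in the remaining coordinates arbitrarily, invoke the universal property of $\coprod\class{K}$) but entirely on the algebra side, with no duality machinery; this is more elementary and makes transparent that what is really being used is only that every non-trivial member of $\ISP(\M)$ admits a homomorphism to $\M$. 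The trade-off is that the paper's route exhibits the link with piggyback dualities that Theorem~\ref{Theo:CoproToNatDual} then exploits. For $(\Rightarrow)$ the paper simply observes that $\CM\subseteq\ope{IS}(\coprod\CM)$ and that $\coprod\CM$ is finite because a finitely generated quasivariety of finite algebras is locally finite, so one may take $\M=\coprod\CM$ itself; you instead embed $\coprod\CM$ into a product $\prod_{\alpha\in I}\M_{i(\alpha)}$ and extract a finite subproduct $\M^{*}=\prod_{\alpha\in F}\M_{i(\alpha)}$ through which all the $\epsilon_{\M_i}$ still inject. Your version is longer but avoids invoking local finiteness; the paper's is shorter and produces a generator inside $\CA$ canonically. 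Both are sound (the only caveat, harmless here, is the degenerate case where $\CA$ contains only trivial algebras, which both arguments handle vacuously).
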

\begin{proof}
For the
forward 
implication,  let $\CM$ be a finite set of finite algebras such that $\CA=\ISP(\CM)$. Since $\CA$ admits free products, $\CM\subseteq\mathbb{IS}(\coprod \CM)$ and hence  $\CA=\ISP(\coprod \CM)$. Moreover, since $\CA$ is locally finite,
$\coprod \CM$ is finite.

Assume now that $\CA=\ISP(\M)$ for some algebra $\M$. There is no loss of generality 
in 
assuming that $\M$ is finite. 
Now $\text{Sep}_{\M,\Omega}$ holds, where 
 $\Omega=\fntH\fnt U(\M)$.
Let $\class{K}$ be a set of non-trivial algebras in $\CA$ and   
let $\{\epsilon_{\B}\mid \B\in \class{K}\}$
 be the universal co-cone that determines the coproduct $\coprod \class{K}$.
 Proving  that each $\epsilon_{\B}$ is an embedding is equivalent to proving  that the image of $\fntH\fnt U(\epsilon)$ is
$\fntH\fnt U(\B)$.
Let $h\in \fntH\fnt U(\B)$.   By Theorem~\ref{Theo:RevEng} 
(referring back to Lemma~\ref{jointsurj})
there exists $(x,\w)\in Y_{\B}$ such that ${\Phi_{\B}(x,\w)=h}$. Since $\CM=\{\M\}$ and $\Omega=\fntH\fnt U(\M)$, we have
$x\in\CA(\B,\M)$ and $\w\in\fnt{HU}(\M)$. 
 Let $x_{\B}=x$ and, for each $\A\in\class{K}\setminus\{\B\}$,  let $x_{\A}$ be an arbitrary element of $\CA(\A,\M)$
(since $\A$   is non-trivial, this set is non-empty).
Then $(\vec{x},\w)\in Y_{\coprod\class{K}}$. From  Fig.~\ref{Fig:Factor},  
\[
\fntH\fnt U(\epsilon_{\B})(\Psi_{\coprod\class{K}}(\vec{x},\w))=\Phi_{\B}\circ\xi_{\B}(\vec{x},\w)=\w\circ x_{\B}
=\w\circ x=h.
\] 
It follows that the image of  $\fntH\fnt U(\epsilon_{\B})$ is 
$\fntH\fnt U(\B)$.
\end{proof}

 The following theorem collects together the connections between the properties of coproducts in a finitely generated $\CCD$-based quasivariety and the type of natural duality that it admits. Its proof is a direct application of Theorems~\ref{genpig}, \ref{Theo:EmbeddingCoprod},  \ref{Theo:OntoCop} and \ref{Theo:FreeProd} and Corollary~\ref{Cor-to-RevEng}.

\begin{theorem}\label{Theo:CoproToNatDual}
Let $\CA$ be a finitely generated $\CCD$-based quasivariety and let $\fnt U\colon\CA\to\CCD$ be the corresponding forgetful functor. 
\begin{newlist}
\item[{\rm (i)}] $\CA$ admits free products if and only if it admits a 
single-sorted natural duality.
\item[{\rm (ii)}] $\fnt U$ satisfies
{\upshape  
(E)} if and only if 
$\CA$
 admits a simple piggyback duality. 
\item[{\rm (iii)}] $\fnt U$ satisfies 
{\upshape 
(S)}  if and only if 
$\CA$
 admits a piggyback duality,
which may be single-sorted or multisorted, 
 but which is such that $|R_{\w_1,\w_2}|\leq 1$ for each pair of carrier
maps  
 $\w_1$, $\w_2$.  
Moreover, if any piggyback duality for 
$\CA$ has this feature  then all such dualities do.  
\item[{\rm (iv)}] $\fnt U$ preserves coproducts if and only if $\CA$ admits a simple piggyback duality that is also a \DP-based duality.
\end{newlist}
\end{theorem}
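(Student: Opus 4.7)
The plan is to verify each of the four items by reading the preceding theorems in the opposite direction to the way they have been used so far, so that what was previously an internal structural condition on $\CA$ is now packaged as the existence (or type) of a natural duality. All four items concern finitely generated $\CCD$-based quasivarieties, so Theorem~\ref{genpig}, applied with $\CM$ a finite generating set and $\Omega_\M=\fntH\fnt U(\M)$ for each $\M\in\CM$, is available throughout to guarantee at least one piggyback duality for $\CA$.

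For \textbf{(i)}, Theorem~\ref{Theo:FreeProd} says that $\CA$ admits free products iff $\CA=\ISP(\M)$ for a single finite algebra $\M$. Given such $\M$, Theorem~\ref{genpig} applied with $\CM=\{\M\}$ and $\Omega_\M=\fntH\fnt U(\M)$ produces a natural duality whose alter ego has a single sort ($(\mathrm{Sep})_{\M,\Omega_\M}$ is automatic). Conversely, single-sortedness of a natural duality for $\CA$ is, by definition, the statement $\CA=\ISP(\M)$ for a single finite $\M$. For \textbf{(ii)}, this is nothing but the equivalence (1)$\Leftrightarrow$(3) of Theorem~\ref{Theo:OntoCop}: $\fnt U$ satisfies (E) iff there exist a finite $\M$ with $\CA=\ISP(\M)$ and $\w\in\fntH\fnt U(\M)$ with $(\mathrm{Sep})_{\M,\w}$, and this is precisely the definition of a simple piggyback duality introduced after Theorem~\ref{genpig}.

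For \textbf{(iii)}, fix any piggyback duality for $\CA$ obtained from Theorem~\ref{genpig}, with carrier set $\Omega$. The equivalence (1)$\Leftrightarrow$(5) of Theorem~\ref{Theo:EmbeddingCoprod} says that $\fnt U$ satisfies (S) iff $|R_{\w_1,\w_2}|\leq 1$ for all $\w_1,\w_2\in\Omega$. Since (S) is an intrinsic property of $\fnt U$ and $\CA$, and this equivalence holds for \emph{any} choice of piggyback duality, the bound $|R_{\w_1,\w_2}|\leq 1$ either holds in every piggyback duality for $\CA$ or in none; this is the ``moreover'' clause. For \textbf{(iv)}, combining (ii) and (iii), $\fnt U$ preserves coproducts iff it satisfies both (E) and (S), iff $\CA$ admits a simple piggyback duality (from~(E)) in which the unique carrier map $\w$ satisfies $|R_{\w,\w}|\leq 1$ (from~(S)). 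Under $(\mathrm{Sep})_{\M,\w}$ the set $R_{\w,\w}$ must be non-empty (see the remark after Corollary~\ref{Cor-to-RevEng}), hence $|R_{\w,\w}|=1$, and Corollary~\ref{Cor-to-RevEng} asserts that in this case the Priestley dual of $\fnt U(\A)$ is obtained directly from the natural dual $\fnt D(\A)$ by taking $R_{\w,\w}^{\X}$ as the order; this is precisely the assertion that the simple piggyback duality is also a \DP-based duality.

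The proof is essentially bookkeeping, as the authors indicate, so the main obstacle is conceptual rather than technical: one must recognise that in each item the right-hand side is a statement about the \emph{existence} of a duality with prescribed features, whereas the left-hand side is an intrinsic property of $\CA$ or of $\fnt U$. The bridge in each case is that Theorem~\ref{genpig} guarantees enough flexibility in producing dualities so that the existence of a duality with the prescribed features is equivalent to the corresponding intrinsic property. The slightly delicate point is the ``moreover'' clause in (iii), where one must observe that the bound $|R_{\w_1,\w_2}|\leq 1$ is not a feature of a particular duality but is, via Theorem~\ref{Theo:EmbeddingCoprod}, equivalent to the intrinsic condition (S) and therefore independent of the piggyback duality chosen.
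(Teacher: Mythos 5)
Your proposal is correct and follows essentially the same route as the paper, which itself offers no written-out argument but states that the theorem "is a direct application of Theorems~\ref{genpig}, \ref{Theo:EmbeddingCoprod}, \ref{Theo:OntoCop} and \ref{Theo:FreeProd} and Corollary~\ref{Cor-to-RevEng}" --- precisely the ingredients you assemble, item by item. Your handling of the ``moreover'' clause in (iii) (the bound $|R_{\w_1,\w_2}|\leq 1$ is equivalent to the intrinsic condition (S) for \emph{every} choice of piggyback duality, hence holds for all or none) and of the non-emptiness of $R_{\w,\w}$ in (iv) supplies exactly the bookkeeping the paper leaves implicit.
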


\section{Stability properties of term reducts}\label{Sec:CopSub}
Until now we have focused on finitely generated $\CCD$-based
quasivarieties, and made full use of the duality theory that is thereby available.  However it is striking that in our surjectivity theorem
(Theorem~\ref{Theo:EmbeddingCoprod}) 
certain of the equivalent conditions 
would be meaningful in a wider setting, 
 We  accordingly now seek to ascertain how far  the results of the  previous section depend 
critically 
on the assumptions we made about $\CA$.
There are two respects in which we may extend the setting.  
First of all we demanded that our quasivarieties be finitely generated
in order to exploit techniques from natural duality theory; this is not a necessary requirement for coproducts to be available.  Secondly, we may
seek to remove the restriction that our base variety be $\CCD$.

In what follows it will be appropriate  to consider the following generalisation of the notion of a quasivariety. 
A class of algebras $\CA$ is called a \defn{prevariety} if $\ope{ISP}(\CA)=\CA$. 
For each algebra $\A$ in a given prevariety $\CA$  the set of congruences $\Cong_{\CA}(\A)=\{\,\theta\in\Cong(\A)\mid \A{/}{\theta}\in\CA\,\}$ is a complete lattice for the usual inclusion order.
We note that,  
as a consequence,  any 
prevariety 
$\CA$ admits free objects and coproducts.

We shall extend our earlier definition so as to encompass
prevarieties $\CA$ from which there exists  an appropriate  natural forgetful functor  
$\fnt U_{\CA, \cat C}$ to  a specified class $\cat C$ of algebras. 
The properties of the functor $\fnt U_{\CA,\cat C}$ with respect to coproducts will depend on both  
$\CA$ and $\cat C$.  We remain interested principally
in the case that $\cat C=\CCD$ and  make no claim to 
give an exhaustive analysis of  
the properties of $\fnt U_{\CA,\cat C}$ with respect to coproducts.   
However we do take our  study far enough to reveal which properties of the forgetful functor  are preserved when we restrict to subprevarieties of $\CA$.  This enables us to enlarge the range of classes of algebras 
in which coproducts can be identified.

 Let 
$\CA$ be a class of algebras with language $\mathcal{L}$. For each set $\mathcal T$ of $\mathcal{L}$-terms, the \defn{$\mathcal T$-reduct} of an algebra $\A\in\CA$ is defined to be the algebra 
$(A,\{t^{\A}\mid t\in\mathcal T\}),$
where $A$ is the universe of $\A$. The algebra $\fnt U_{\CA,\mathcal T}(\A)=(A,\{t^{\A}\mid t\in\mathcal T\})$ is called a \defn{ term-reduct} of $\A$.
This notion of term-reduct encompasses the usual notion of reduct in the following way.
Let $\mathcal{L}'\subseteq \mathcal{L}$ and, for an  $n$-ary   
operation $f\in\mathcal{L}'$,  
let $\mathcal T=\{t_f\mid f\in\mathcal{L}'\}$,
 where for an $n$-ary operation $f$ the term $t_f$ is $f(x_1,\ldots,x_n)$ for variables $x_1,\ldots,x_n$. Then 
$$
\fnt U_{\CA,\mathcal T}(\A)=(\A,\{t_f^{\A}\mid f\in\mathcal{L}\})=(\A,\{f^{\A}\mid f\in\mathcal{L}\}).
$$ 
Since each $\mathcal{L}$-term has a well-defined arity, we can consider $\mathcal T$ as a language. If $\cat C$ is a class of $\mathcal T$-algebras containing $\fnt U_{\CA,\mathcal T}(\A)$ for each $\A \in\CA$, we shall write~$\fnt U_{\CA,\cat C}(\A)$ when we wish to highlight that we are considering the algebra $\fnt U_{\CA,\mathcal T}(\A)$ as an algebra in $\cat C$. In this case we say that $\CA$ is \defn{ $\cat C$-based}. Observe that if $\cat C$ is a variety and a class of algebras $\CA$ is 
$\cat C$-based, then $\ope{V}(\CA)$, the variety generated by $\CA$, is also $\cat C$-based.  

Assume that a given class $\CA$ of algebras is $\cat C$-based.  We wish to define $\fnt U_{\CA,\cat C}$ on morphisms so as to make it a 
functor from~$\CA$ into the class~$\cat C$.
For every $n$-ary $\mathcal{L}$-term $t$ and every homomorphism $h\colon \A\to\B$, we have $h(t^{\A}(a_1,\ldots,a_n))=t^{\B}(h(a_1),\ldots,h(a_n))$ for each $a_1,\ldots,a_n\in\A$. 
Then 
the assignment $\fnt U_{\CA,\cat C}(h)=h$  does indeed 
make $\fnt U_{\CA,\cat C}$ 
into a functor. 
In the case that $\CA$ is $\CCD$-based, we omit the subscripts 
and  write $\fnt{U}$ instead of
 $\fnt U_{\CA,\CCD}$.

Let  $\class{K}$ be a set of algebras in $\CA$ and let $\coprod_{\CA} \class{K}$ be its coproduct in $\CA$.
Let $\{\,\epsilon_{\B}\colon \B\to\coprod_{\CA} \class{K}\mid \B\in \class{K}\,\}$
 be the universal co-cone that determines the coproduct $\coprod_{\CA} \class{K}$ up to isomorphism. There exists a unique map 
\begin{equation}\tag{Chi}\label{Eq:DefChi}
  \textstyle\chisub{\class{K}}\colon \coprod_{\cat C}\fnt U_{\CA,\cat C}(\class{K})\to \fnt U_{\CA,\cat C}(\coprod_{\CA}\class{K})
\end{equation}
such that $\chisub{\class{K}}\, \circ\, \varepsilon_{\B}=\fnt U_{\CA,\cat C}(\epsilon_{\B})$ for each $\B\in\class{K}$, where the family of homomorphisms
$  \{\,\varepsilon_{\B}\colon \fnt U_{\CA,\cat C}(\B)\to \coprod_{\cat C}\fnt U_{\CA,\cat C}(\class{K})\mid \B\in\class{K}\,\}$ is the universal co-cone in~$\cat C$.
In the previous section we investigated
when the functor  
$\fnt U$, for a fixed $\CCD$-based quasivariety $\CA$,
 preserves coproducts, determining when $\chisub{\class{K}}$ is injective and/or surjective.  
We extend our earlier usage and 
say that the functor~$\fnt U_{\CA,\cat C}$ satisfies
\begin{newlist}
\item[(S)] if for each set $\class{K}\subseteq\CA$, the map $\chisub{\class{K}}$ is surjective;
\item[(E)] if for each set $\class{K}\subseteq\CA$, the map $\chisub{\class{K}}$ is injective.
\end{newlist}

A remark is in order here. The only result of this section in which we study the properties of condition (E) is Theorem~\ref{Theo:RetractCons}. The more amenable properties of condition (S) are mainly due to the fact that we can relate it to a property of the free objects in $\CA$ as in Theorem~\ref{Theo:EmbeddingCoprod}(2), while condition (E) depends on the set of generating algebras, or the set ${\rm Si} (\A)$,
 as can be 
seen 
from Theorem~\ref{Theo:OntoCop}.

The first result of this section 
generalises
Theorem~\ref{Theo:EmbeddingCoprod}(1)--(4).
\begin{theorem}\label{Theo:GeneralS} Let
$\CA$ and $\cat C$ be prevarieties such that $\CA$ is $\cat C$-based.
Then
  the following statements are equivalent:
  \begin{newlist}
    \item[{\rm (1)}] $\fnt U_{\CA,\cat C}$ satisfies {\rm (S)};
    \item[{\rm (2)}] for every finite set   $\class{K}$ of disjoint copies of 
$\Free_{\CA}(1)$, 
the image of $\chisub{\class{ K}}$ is 
$\fnt U_{\CA,\cat C}(\coprod_{\CA}\class{K})$;
\item[{\rm (3)}] for every $n\geq 1$ and every  $n$-ary term $t$ in the language of $\CA$ there exist  unary terms $t_1,\ldots,t_n$ in the language of $\CA$ and  an $n$-ary term $s$ in the language of $\cat C$ such that 
$$
t^{\A}(a_1,\ldots,a_n)=s^{\A}(t_1^{\A}(a_1),\ldots,t_n^{\A}(a_n)),
$$
 for every $\A\in\CA$ and every $a_1,\ldots,a_n\in\A$; 
\item[{\rm (4)}] for every $\A\in\CA$ and $\str{B}\in\ope{S}(\fnt U_{\CA,\cat C}(\A))$,  the set  
$\{\str{C}\in\ope{S}(\A)\mid C\subseteq B\}$ is empty or has a top element.
\end{newlist}
\end{theorem}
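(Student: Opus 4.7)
The plan is to establish the four equivalences along the cycle $(1) \Rightarrow (2) \Rightarrow (3) \Rightarrow (4) \Rightarrow (1)$, adapting the argument used for Theorem~\ref{Theo:EmbeddingCoprod}(1)--(4) to the present more general setting. Since $\CA$ is only assumed to be a prevariety and $\cat C$ need not be $\CCD$, no appeal to natural duality is available; the proof has to be carried entirely by universal properties of coproducts, together with the standard fact that in any prevariety the coproduct of a set of objects is generated, as an algebra, by the union of the images of its injection maps (a routine consequence of the uniqueness clause of the universal property). The implication $(1) \Rightarrow (2)$ is immediate.

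For $(2) \Rightarrow (3)$, I would take $\class K$ to be $n$ pairwise disjoint copies of $\Free_{\CA}(1)$, so that $\coprod_{\CA}\class K \cong \Free_{\CA}(n)$ with free generators $a_1,\ldots,a_n$, each $a_i$ being the image under the injection of the generator of the $i$th copy. Given an $n$-ary $\mathcal L$-term $t$, the element $a := t^{\Free_{\CA}(n)}(a_1,\ldots,a_n)$ lies in the image of $\chisub{\class K}$ by (2); lifting a preimage through $\chisub{\class K}$ and using that the $\cat C$-coproduct is $\cat C$-generated by the union of the images of the $\varepsilon_{\B}$, this preimage may be written as a $\cat C$-term in elements of the copies, each of which is itself a unary $\mathcal L$-term image of the corresponding $a_i$; applying $\chisub{\class K}$ produces the desired equation in $\Free_{\CA}(n)$, which by freeness propagates to all of $\CA$. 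For $(3) \Rightarrow (4)$, I would set $B^{\circ} := \{\,a\in A \mid \langle a\rangle_{\CA} \subseteq B\,\}$ and note that this is precisely $\bigcup\{\,C \mid \str C \in \ope S(\A),\ C \subseteq B\,\}$, so that the family in (4) has a top element iff $B^{\circ}$ is a subuniverse of $\A$. To verify the latter, fix $a_1,\ldots,a_n \in B^{\circ}$ and any $\mathcal L$-operation $f$, and observe that every element of $\langle f^{\A}(a_1,\ldots,a_n)\rangle_{\CA}$ has the form $t^{\A}(a_1,\ldots,a_n)$ for some $n$-ary $\mathcal L$-term $t$; by (3) this equals $s^{\A}(t_1^{\A}(a_1),\ldots,t_n^{\A}(a_n))$, which lies in $B$ because each $t_i^{\A}(a_i) \in \langle a_i\rangle_{\CA} \subseteq B$ and $\str B$ is closed under $\cat C$-terms.

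Finally, for $(4) \Rightarrow (1)$, let $\class K \subseteq \CA$ and let $D$ be the image of $\chisub{\class K}$ in $\fnt U_{\CA,\cat C}(\coprod_{\CA}\class K)$. Each $\epsilon_{\B}(B)$ is a $\CA$-subalgebra contained in $D$, so by (4) the family $\{\,\str C \in \ope S(\coprod_{\CA}\class K) \mid C \subseteq D\,\}$ has a top element $\str E$. The corestrictions $\epsilon'_{\B}\colon \B \to \str E$ form a co-cone, and the universal property of $\coprod_{\CA}\class K$ produces a morphism $\phi\colon \coprod_{\CA}\class K \to \str E$ with $\phi \circ \epsilon_{\B} = \epsilon'_{\B}$; composing $\phi$ with the inclusion $\str E \hookrightarrow \coprod_{\CA}\class K$ yields a $\CA$-endomorphism of $\coprod_{\CA}\class K$ agreeing with $\epsilon_{\B}$ on each summand, hence equal to the identity by the uniqueness clause of the universal property. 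This forces $\str E = \coprod_{\CA}\class K$ and so $D = \fnt U_{\CA,\cat C}(\coprod_{\CA}\class K)$. The step I expect to require the most care is $(2) \Rightarrow (3)$: one must be attentive to the exact syntactic template $s(t_1(x_1),\ldots,t_n(x_n))$ demanded by (3), since a preimage of $a$ in the $\cat C$-coproduct typically involves several unary-term images from each copy and these must be fitted into a single per-variable slot in the factorisation.
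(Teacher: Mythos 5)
Your proposal follows essentially the same route as the paper's proof. The paper disposes of $(1)\Rightarrow(2)$ trivially, handles $(2)\Rightarrow(3)\Rightarrow(4)$ by observing that the corresponding arguments in Theorem~\ref{Theo:EmbeddingCoprod} use neither finite generation nor any special feature of $\CCD$, and proves $(4)\Rightarrow(1)$ by exactly your construction: the image of $\chisub{\class{K}}$ is the $\cat C$-subalgebra $\langle\bigcup\epsilon_{\A}(\A)\rangle_{\cat C}$, each $\epsilon_{\A}(\A)$ is an $\CA$-subalgebra contained in it, so the family in (4) is non-empty and its top element, containing every $\epsilon_{\A}(\A)$, must be all of $\coprod_{\CA}\class{K}$. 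Your detour through the universal property (an endomorphism agreeing with each $\epsilon_{\B}$ must be the identity) is just a longer way of saying that $\coprod_{\CA}\class{K}$ is $\CA$-generated by $\bigcup\epsilon_{\A}(\A)$, which is the fact the paper invokes directly; the two are interchangeable.

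The caveat you raise about $(2)\Rightarrow(3)$ is well placed, but it is not a defect of your argument relative to the paper's: the paper's own proof of that step (inside Theorem~\ref{Theo:EmbeddingCoprod}) simply asserts that a preimage $b$ of $t(a_1,\ldots,a_n)$ can be written as $s(b_1,\ldots,b_n)$ with exactly one element $b_i$ drawn from the $i$th copy, whereas generation only gives a $\cat C$-term in finitely many elements, possibly several from the same copy. Read literally, the one-slot-per-variable template is too strong: in De Morgan algebras, where (1) holds, the term $(x_1\wedge x_2)\vee(\neg x_1\wedge\neg x_2)$ restricts on the two-element subalgebra of $\mathbf 4$ to the biconditional, and since unary De Morgan terms act there as the identity, the complement or a constant while $s$ is monotone, no expression $s(t_1(x_1),t_2(x_2))$ can be isotone in $x_1$ when $x_2=1$ and antitone when $x_2=0$. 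So condition (3) must be read as allowing repeated variables, i.e.\ $t(x_1,\ldots,x_n)=s(t_1(x_{i_1}),\ldots,t_m(x_{i_m}))$ with each $i_j\in\{1,\ldots,n\}$; under that reading your sketch of $(2)\Rightarrow(3)$ is complete as it stands, and the implication $(3)\Rightarrow(4)$ is unaffected, since the argument there only needs each $t_j(a_{i_j})$ to lie in $B$ and $B$ to be closed under $\cat C$-terms. In short: same proof as the paper, and the step you flagged as delicate is precisely where the source itself is imprecise.
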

\begin{proof}  
The implication (1) $\Rightarrow$ (2) is straightforward. To prove the implications (2) $\Rightarrow$ (3) $\Rightarrow $ (4) it is enough to replace $\CCD$ by $\cat C$ in the proofs of the corresponding implications in  Theorem~\ref{Theo:EmbeddingCoprod}, 
since the latter do not depend on the fact that $\CA$ is finitely generated.

 We now  prove (4) $\Rightarrow $ (1). 
Let us consider 
 a set  $\class{K}$ of algebras 
in $\CA$ and let 
$\{\,\epsilon_{\A}\colon \A\to\coprod_{\CA} \class{K}\mid \A\in \class{K}\}$ be the universal co-cone that determines~$\coprod_{\CA} \class{K}$.   
Now let $\B=\langle \,\bigcup \{\,\epsilon_{\A}(\A)\mid\A\in\class{K}\,\}\rangle_{\cat C}\subseteq \coprod_{\CA} \class{K}$.  Then $\B\in \ope{S}(\fnt U_{\CA,\cat C}(\coprod\class{K}))$ and $\epsilon_{\A}(\A)\subseteq \B$ for each $\A\in\class{K}$. 
If $\{\,\varepsilon_{\A}\colon \A\to\coprod_{\CA} \fnt U_{\CA,\cat C}(\class{K})\mid \A\in \class{K}\}$ denotes  the universal co-cone that determines $\coprod_{\cat C} \fnt U_{\CA,\cat C}(\class{K})$, it follows that 
$\textstyle\coprod_{\cat  C}\fnt U_{\CA,\cat C}(\class{K})=\langle\, \bigcup \{\,\varepsilon_{\A}(\A)\mid\A\in\class{K}\,\}\rangle_{\cat C}$.
 By (\ref{Eq:DefChi}) we have  
${\chisub{\class{K}}(\coprod\fnt U_{\CA,\cat C}(\class{K}))=B}$.
By assumption, $\{\str{C}\in\ope{S}(\coprod\class{K})\mid C\subseteq B\}$ has a top element, $\str{D}$ say. Then $\epsilon_{\A}(\A)\subseteq \str{D}$ for each $\A\in\class{K}$. Finally, since $\langle\, \bigcup \{\,\epsilon_{\A}(\A)\mid\A\in\class{K}\,\}\rangle_{\CA}=\coprod_{\CA}\class{K}$, we have that $\str{D}=\coprod_{\CA}\class{K}$ and that the universes $D$ and $B$ coincide. We conclude that $\chisub{\class{K}}(\coprod\fnt U_{\CA,\cat C}(\class{K}))=\coprod_{\CA}\class{K}$.
\end{proof}

\begin{corollary}\label{Cor:USextend}
 Let
$\CA$ and $\cat C$ be prevarieties 
such that 
the variety $\ope{V}(\CA)$ generated by $\CA$ is $\cat C$-based. Then the following statements are equivalent:
\begin{newlist}
\item[{\rm (1)}] $\fnt U_{\CA,\cat C}$ satisfies {\rm (S)};
\item[{\rm (2)}] $\fnt U_{\cat B,\cat C}$ satisfies {\rm (S)} for each prevariety $\cat B$ such that $\ope{ISP}(\Free_{\CA}(\aleph_0))
\subseteq \cat B\subseteq \ope{V}(\CA)$.
\end{newlist}
\end{corollary}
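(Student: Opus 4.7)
The strategy is to invoke the syntactic characterisation in condition~(3) of Theorem~\ref{Theo:GeneralS} to recast property~(S) as a family of equational assertions, and then use the fact that every prevariety $\cat B$ sitting between $\ope{ISP}(\Free_{\CA}(\aleph_0))$ and $\ope{V}(\CA)$ has the same equational theory as $\CA$.

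First, I would check that Theorem~\ref{Theo:GeneralS} applies both to $\CA$ and to every admissible $\cat B$. Since $\cat B \subseteq \ope{V}(\CA)$ and $\ope{V}(\CA)$ is $\cat C$-based by hypothesis, each algebra in $\cat B$ has a $\cat C$-reduct, so $\cat B$ is itself $\cat C$-based. Hence the theorem tells us that $\fnt U_{\CA,\cat C}$ satisfies~(S) if and only if condition~(3) holds in $\CA$, and similarly for $\cat B$. Thus the problem reduces to showing that condition~(3) holds in $\CA$ if and only if it holds in $\cat B$.

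The key observation is that for each $n$-ary term $t$, condition~(3) asserts the existence of unary terms $t_1,\ldots,t_n$ and a $\cat C$-term $s$ making a specific equation $t(x_1,\ldots,x_n)\approx s(t_1(x_1),\ldots,t_n(x_n))$ valid in the class. Thus condition~(3) depends only on the equational theory of the class. On the one hand, any equation valid in $\CA$ remains valid throughout $\ope{V}(\CA) = \ope{HSP}(\CA)$, hence in $\cat B$. On the other hand, any equation valid in $\cat B$ is in particular valid in $\Free_{\CA}(\aleph_0) \in \ope{ISP}(\Free_{\CA}(\aleph_0)) \subseteq \cat B$; but any equation valid in $\Free_{\CA}(\aleph_0)$ is valid in all of $\CA$, because for $\A \in \CA$ and any tuple $\vec{a}\in A^{\aleph_0}$ the universal mapping property provides a homomorphism from $\Free_{\CA}(\aleph_0)$ to $\A$ sending the free generators to $\vec a$. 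Therefore the equational theories of $\CA$ and $\cat B$ coincide.

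Combining these steps, condition~(3) holds in $\CA$ if and only if it holds in $\cat B$, whence (1) and~(2) are equivalent; the direction (2)$\Rightarrow$(1) is in any case immediate, since $\CA$ itself lies in the specified interval because $\Free_{\CA}(\aleph_0)\in\CA$ and $\CA = \ope{ISP}(\CA)$. There is no real obstacle: the whole content lies in Theorem~\ref{Theo:GeneralS}, which turns the functorial question into an equational one to which standard Birkhoff-style reasoning can be applied.
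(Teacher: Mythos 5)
Your proof is correct and takes essentially the same route as the paper's: the published argument is a one-line appeal to Theorem~\ref{Theo:GeneralS} together with the observation that $\Free_{\ope{V}(\CA)}(\lambda)=\Free_{\CA}(\lambda)=\Free_{\cat B}(\lambda)$ for the relevant $\cat B$, which is precisely the content of your identification of the equational theories of $\CA$ and $\cat B$ via $\Free_{\CA}(\aleph_0)\in\cat B$. That you channel this through the syntactic condition~(3) of Theorem~\ref{Theo:GeneralS} rather than condition~(2) is an immaterial difference.
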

\begin{proof}
The result follows directly from Theorem~\ref{Theo:GeneralS} and the observation that 
$\Free_{\ope{V}(\CA)}(\lambda)=\Free_{\CA}(\lambda)=\Free_{\ope{V}(\cat B)}(\lambda)$ for any cardinal $\lambda$ and each 
$\cat B$ such that 
$\ope{ISP}(\Free_{\CA}(\aleph_0))\subseteq \cat B\subseteq \ope{V}(\CA)$.
\end{proof}

We observe that  we can relax the 
assumption of finite generation in Theorem~\ref{Theo:EmbeddingCoprod} 
 to local finiteness and still have a condition 
similar 
 to condition (5).

\begin{theorem}
Let $\CA$ be a locally finite $\CCD$-based quasivariety. Then the following statements are equivalent:
\begin{newlist}
\item[{\rm (1)}] $\fnt U$ satisfies {\rm (S)};
\item[{\rm (2)}] for each $n\geq 1$ and each $\w_1,\w_2\in\fnt{HU}(\Free_{\CA}(n))$, the set $$\{\, \C\in\ope{S}(\Free_{\CA}(n)^2)\mid C\subseteq (\w_1,\w_2)^{-1}(\leqslant)\, \} $$ is 
either  
empty or has a top element.
\end{newlist}
\end{theorem}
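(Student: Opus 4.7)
The plan is to leverage Theorem~\ref{Theo:EmbeddingCoprod} applied to the auxiliary finitely generated quasivariety $\cat{B}_n := \ISP(\Free_{\CA}(n)) \subseteq \CA$. Local finiteness of $\CA$ forces $\Free_{\CA}(n)$ to be finite, so $\cat{B}_n$ is genuinely finitely generated, is $\CCD$-based, and evidently contains $\Free_{\CA}(n)$ itself.

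For the direction (1)~$\Rightarrow$~(2), I would simply invoke condition~(4) of Theorem~\ref{Theo:GeneralS} with $\A := \Free_{\CA}(n)^2$ and $\str{B} := (\w_1,\w_2)^{-1}(\leq)$; the latter is a bounded sublattice of $\fnt U(\Free_{\CA}(n)^2)$, and the conclusion supplied by Theorem~\ref{Theo:GeneralS}(4) is exactly~(2).

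For the main direction (2)~$\Rightarrow$~(1), I would fix $n \geq 1$ and apply Theorem~\ref{Theo:EmbeddingCoprod} to $\cat{B}_n$ with $\CM := \{\Free_{\CA}(n)\}$ and $\Omega := \fnt{HU}(\Free_{\CA}(n))$, noting that $\text{(Sep)}_{\CM,\Omega}$ holds automatically for this maximal choice of $\Omega$. The set $R_{\w_1,\w_2}$ then consists of the maximal subalgebras of $\Free_{\CA}(n)^2$ contained in $(\w_1,\w_2)^{-1}(\leq)$, and since $\Free_{\CA}(n)^2$ is finite, the assumption in~(2)---that the poset of subalgebras of $\Free_{\CA}(n)^2$ contained in $(\w_1,\w_2)^{-1}(\leq)$ is empty or has a top element---is equivalent to $|R_{\w_1,\w_2}|\leq 1$, i.e.\ to condition~(5) of Theorem~\ref{Theo:EmbeddingCoprod} for $\cat{B}_n$. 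Hence $\fnt U_{\cat{B}_n}$ satisfies~(S), and Theorem~\ref{Theo:EmbeddingCoprod}(3) supplies, for each $n$-ary term $t$ in the language $\mathcal L$ of $\CA$, unary $\mathcal L$-terms $t_1,\ldots,t_n$ and an $n$-ary lattice term $s$ such that the equation $t(x_1,\ldots,x_n) \approx s(t_1(x_1),\ldots,t_n(x_n))$ is valid throughout $\cat{B}_n$.

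Finally, because $\Free_{\CA}(n) \in \cat{B}_n$, this equation is satisfied by the free generators of $\Free_{\CA}(n)$, and so by freeness it is an identity of the whole of $\CA$. Letting $n$ vary over the positive integers recovers condition~(3) of Theorem~\ref{Theo:GeneralS} in full, whence $\fnt U$ satisfies~(S) by that theorem. The only point requiring care is the bookkeeping that identifies~(2) with Theorem~\ref{Theo:EmbeddingCoprod}(5): in the finite poset of subalgebras of $\Free_{\CA}(n)^2$, ``at most one maximal element'' is equivalent to ``empty or possessing a top element'', so the translation is clean; everything else is essentially formal once the correct auxiliary quasivariety $\cat{B}_n$ is in play.
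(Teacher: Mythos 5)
Your proposal is correct and follows essentially the same route as the paper: both directions pass through Theorem~\ref{Theo:GeneralS}, and the key step (2)~$\Rightarrow$~(1) is handled, exactly as in the paper, by applying Theorem~\ref{Theo:EmbeddingCoprod} to the auxiliary finitely generated quasivariety $\ope{ISP}(\Free_{\CA}(n))$ with the full set of carrier maps, after observing that in the finite poset of subalgebras of $\Free_{\CA}(n)^2$ contained in $(\w_1,\w_2)^{-1}(\leqslant)$ the condition ``empty or has a top element'' is equivalent to $|R_{\w_1,\w_2}|\leq 1$. The only (immaterial) difference is that you transfer the conclusion back to $\CA$ via the syntactic condition~(3) of Theorem~\ref{Theo:GeneralS}, whereas the paper uses condition~(2) by identifying the map $\chisub{\class{K}}$ for $n$ copies of $\Free_{\CA}(1)$ computed in $\CA$ with the one computed in $\ope{ISP}(\Free_{\CA}(n))$.
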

\begin{proof}  

Here we shall need to consider other quasivarieties
besides $\CA$ itself and their forgetful functors to~$\CCD$.  We 
add subscripts to indicate the domains of the functors.

The implication (1) $\Rightarrow $ (2) follows from Theorem~\ref{Theo:GeneralS} and the observation that $(\w_1,\w_2)^{-1}(\leqslant)$ is a sublattice of $\fnt U(\Free_{\CA}(n))$.

Let us 
now prove (2) $\Rightarrow$ (1).   
Fix  $n\geq 1$ and let $\cat B=\ope{ISP}(\Free_{\CA}(n))$.
Let $\CM=\{\Free_{\CA}(n)\}$ and $\Omega=\fnt{HU}_{\CA}(\Free_{\CA}(n))$. By assumption   $|R_{\w_1,\w_2}|\leq 1$  for each $\w_1,\w_2\in\Omega$.  By Theorem~\ref{Theo:EmbeddingCoprod}, the functor $\fnt U_{\cat B}$ satisfies~(S).
Now 
 let $\class{K}$ be a set with $n$ disjoint copies of $\Free_{\CA}(1)$. Then ${\coprod_{\CA}\class{K}=\Free_{\CA}(n)=\coprod_{\cat B}\class{K}}$. It follows that  the map $\chisub{\class{K}}\colon \coprod_{\CCD}\fnt U_{\CA}(\class{K})\to\fnt U_{\CA}(\coprod_{\CA}\class{K})$
 coincides with the map  
${\chisub{\class{K}}'\colon \coprod_{\CCD}\fnt U_{\cat B}(\class{K})\to\fnt U_{\cat B}(\coprod_{\cat B}\class{K})}$, as defined in \eqref{Eq:DefChi}. The latter 
map is 
surjective by hypothesis. Since this argument is valid for every $n\geq 1$,  Theorem~\ref{Theo:GeneralS} proves that $\fnt U_{\CA}$ satisfies~(S).
\end{proof}

We now wish to pursue the idea that a given prevariety 
$\cat Q$ may sometimes be contained  in another prevariety~$\cat Q'$
which has better properties as regards coproducts  and that this may 
assist us in describing coproducts in~$\cat Q$.  
Let $\cat{Q}$ and $\cat{Q}'$ be prevarieties such that $\cat{Q}\subseteq\cat{Q}'$. 
The category $\cat{Q}$ can be viewed  as a reflective subcategory of $\cat{Q}'$.
 Indeed, for each $\A\in\cat{Q}$, the set of congruences $\theta$ of $\A$ such that $\A/\theta\in\cat{Q}'$ 
has a bottom element;  we  denote this by $\theta_{\cat{Q}}(A)$. 
Moreover,  
given a homomorphism
$h\colon\A\to\B$ there is a 
unique homomorphism  $h'\colon\A/\theta_{\cat{Q}}(A)\to\B/\theta_{\cat{Q}}(A)$
that makes the diagram in Fig.~\ref{Fig:Reflector} commute. The 
assignment  $\A\mapsto\A/\theta_{\cat{Q}}(A)$ and 
$h\mapsto h'$  
is then a well-defined functor $\fnt{R}_{\cat{Q}'}\colon\cat{Q}\to\cat{Q}'$.
It follows that $\fnt{R}_{\cat{Q}'}$ is left adjoint to the inclusion functor from~$\cat{Q}'$ into~$\cat{Q}$. (See \cite[Theorem~2.1.8]{Gor} or \cite[p.~235]{Mal1970} for the existence of the minimal congruence and \cite[Corollary~4.22]{Man1976} for its categorical properties.)
Therefore $\cat{Q}'$ is a reflective subcategory of $\cat{Q}$. 
The following proposition now tells us that we can obtain coproducts
in $\cat Q$ provided we have  descriptions of coproducts in~$\cat Q'$
and of the congruence $\theta_{\cat{Q}}$.
This tactic was  employed for example in  \cite{Ci79,CF79} 
and we  shall 
use it several times in Section~\ref{Sec_Ex}.

\begin{figure} [ht]
\begin{center}
\begin{tikzpicture} 
[auto,
 text depth=0.25ex,
 move up/.style=   {transform canvas={yshift=2.5pt}},
 move down/.style= {transform canvas={yshift=-2pt}},
 move left/.style= {transform canvas={xshift=-2.5pt}},
 move right/.style={transform canvas={xshift=2.5pt}}] 
\matrix[row sep= .65cm, column sep= .9cm] 
{ \node (A) {$\A$};  & \node (B) {$\B$};\\
\node (A') {$\A/\theta_{\cat{Q}(\A)}$}; & \node (B') {$\B/\theta_{\cat{Q}(\B)}$};\\};
\draw [->] (A) to node  {$h$} (B);
\draw [->] (A') to node [swap]  {$h'$} (B');  
\draw [->>] (A) to node {}(A');
\draw [->>] (B) to node {}(B');
\end{tikzpicture}
\end{center}\caption{Functoriality of the reflector $\fnt R_{\cat Q'}$}\label{Fig:Reflector}
\end{figure} 

\begin{proposition} \label{Prop:Reflect} Let $\cat{Q}$ and $\cat{Q'}$ be prevarieties such that $\cat{Q}'\subseteq \cat{Q}$.  
Let $\class{K}$ be a set of algebras in $\cat{Q}'$.  Then
$$
\textstyle\coprod_{\cat{Q}'}\class{K}\cong\fnt{R}_{\cat{Q}'}
\bigl(\coprod_{\cat{Q}}\class{K}\bigr)=\bigl(\coprod_{\cat{Q}}\class{K}\bigr)/(\theta_{\cat{Q}'}(\coprod_{\cat{Q}}\class{K})).
$$
\end{proposition}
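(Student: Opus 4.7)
The plan is to prove the proposition by invoking a standard categorical principle: the reflector $\fnt{R}_{\cat{Q}'}$, being left adjoint to the inclusion functor from $\cat{Q}'$ into $\cat{Q}$ (as noted in the paragraph preceding the statement), preserves all colimits, and in particular coproducts. Since every $\B \in \class{K}$ already lies in $\cat{Q}'$, the minimal congruence $\theta_{\cat{Q}'}(B)$ on $\B$ coincides with the identity congruence, so $\fnt{R}_{\cat{Q}'}(\B) \cong \B$ canonically. Combining these two observations gives the chain of isomorphisms
$$
\textstyle \coprod_{\cat{Q}'} \class{K} \;\cong\; \coprod_{\cat{Q}'} \fnt{R}_{\cat{Q}'}(\class{K}) \;\cong\; \fnt{R}_{\cat{Q}'}\bigl(\coprod_{\cat{Q}} \class{K}\bigr),
$$
and the second equality in the statement of the proposition is  just the definition of the reflector applied to $\coprod_{\cat Q}\class K$.

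For the reader who prefers to see the universal property verified directly, I would proceed as follows. Write $\A = \coprod_{\cat{Q}} \class{K}$, with universal co-cone $\{\epsilon_{\B} \colon \B \to \A \mid \B \in \class{K}\}$ in $\cat{Q}$. Put $\theta = \theta_{\cat{Q}'}(A)$ and let $q \colon \A \to \A/\theta$ be the quotient map, noting that $\A/\theta \in \cat{Q}'$ by the very definition of $\theta$. Then $\{q \circ \epsilon_{\B} \mid \B \in \class{K}\}$ is a co-cone in $\cat{Q}'$. Given any other co-cone $\{f_{\B} \colon \B \to \C \mid \B \in \class{K}\}$ with $\C \in \cat{Q}' \subseteq \cat{Q}$, the universal property of $\A$ in $\cat{Q}$ produces a unique $g \colon \A \to \C$ with $g \circ \epsilon_{\B} = f_{\B}$ for every $\B \in \class{K}$. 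Because $\C \in \cat{Q}'$, the kernel of $g$ is a congruence whose quotient lies in $\cat{Q}'$, so by minimality of $\theta$ we have $\theta \subseteq \ker g$. Hence $g$ factors uniquely as $g = g' \circ q$ for some $g' \colon \A/\theta \to \C$, and this $g'$ is the required mediating morphism. Uniqueness follows because $q$ is surjective and therefore epimorphic.

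The only mildly delicate point is the containment $\theta \subseteq \ker g$, which is a direct consequence of the defining property of $\theta_{\cat{Q}'}(A)$ as the smallest congruence on $\A$ with quotient in $\cat{Q}'$. Everything else is routine diagram chasing; I do not anticipate a substantive obstacle.
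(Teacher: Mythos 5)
Your proposal is correct and follows essentially the same route as the paper, which states Proposition~\ref{Prop:Reflect} without further proof as an immediate consequence of the fact, established in the preceding paragraph, that $\fnt{R}_{\cat{Q}'}$ is left adjoint to the inclusion of $\cat{Q}'$ into $\cat{Q}$ and hence preserves coproducts. Your direct verification of the universal property is a sound elaboration of that one-line argument (the only point worth making explicit is that $\A/\ker g$ lies in $\cat{Q}'$ because it embeds into $\C\in\cat{Q}'$ and prevarieties are closed under $\ope{IS}$).
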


\begin{figure}  
\begin{center}
\begin{tikzpicture} 
[auto,
 text depth=0.25ex,
 move up/.style=   {transform canvas={yshift=2.5pt}},
 move down/.style= {transform canvas={yshift=-2pt}},
 move left/.style= {transform canvas={xshift=-2.5pt}},
 move right/.style={transform canvas={xshift=2.5pt}}] 
\matrix[row sep= .9cm, column sep= .9cm]
{ \node (CUK) {$\coprod_{\cat C}\fnt U_{\CA,\cat C}(\class{K})$ };  
& \node (UCK) {$U_{\CA,\cat C}(\coprod_{\CA}\class{K})$};\\
\node (CUK1) {$\coprod_{\cat C}\fnt U_{\CA',\cat C}(\class{K})$ };  
& \node (UCK1) {$U_{\CA',\cat C}(\coprod_{\CA'}\class{K})$};\\
 };
\draw [->] (CUK) to node  {$\chisub{\class{K}}$} (UCK);
\draw [->] (CUK1) to node [swap] {$\chisub{\class{K}}'$} (UCK1);
\draw [<->] (CUK) to node [swap] {${\rm Id}$} (CUK1);
\draw [->>] (UCK) to node {$\fnt U_{\CA,\cat C}(\rho)$} (UCK1);
\end{tikzpicture}
\end{center}\caption{The proof of Theorem~\ref{Theo:S-Hereditary}}\label{Fig:Chis}
\end{figure} 

Now we assemble some consequences of Proposition~\ref{Prop:Reflect}. 

\begin{theorem}\label{Theo:S-Hereditary}
Let
$\CA$ and $\cat C$ be prevarieties such that $\CA$ is $\cat C$-based. Then $\fnt U_{\CA,\cat C}$ satisfies {\rm (S)} if and only if $\fnt U_{\CA',\cat C}$ satisfies {\rm (S)}  for each prevariety $\CA'\subseteq\CA$. 
\end{theorem}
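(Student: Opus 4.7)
The reverse implication is immediate (take $\CA'=\CA$), so all the work lies in the forward direction. The plan is to carry out the diagram chase advertised by Fig.~\ref{Fig:Chis}. Assume $\fnt U_{\CA,\cat C}$ satisfies~(S), fix a prevariety $\CA'\subseteq\CA$ and a set $\class{K}\subseteq\CA'$, and aim to show that
\[
\chisub{\class{K}}'\colon \coprod\nolimits_{\cat C}\fnt U_{\CA',\cat C}(\class{K})\to \fnt U_{\CA',\cat C}\bigl(\coprod\nolimits_{\CA'}\class{K}\bigr)
\]
is surjective.

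The first ingredient is the reflection map supplied by Proposition~\ref{Prop:Reflect}: $\coprod_{\CA'}\class{K}$ is the $\CA'$-reflection of $\coprod_{\CA}\class{K}$, yielding a surjective quotient $\rho\colon\coprod_{\CA}\class{K}\twoheadrightarrow\coprod_{\CA'}\class{K}$ whose effect on co-cones is captured by $\epsilon'_{\B}=\rho\circ\epsilon_{\B}$ for each $\B\in\class{K}$. Because $\cat C$-reducts of algebras of $\CA'$ are computed in exactly the same way whether we view them inside $\CA$ or inside $\CA'$, the $\cat C$-coproducts $\coprod_{\cat C}\fnt U_{\CA,\cat C}(\class{K})$ and $\coprod_{\cat C}\fnt U_{\CA',\cat C}(\class{K})$ are canonically identified; this justifies the identity arrow on the left of Fig.~\ref{Fig:Chis}. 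Note that $\CA'$ is automatically $\cat C$-based since $\CA$ is, so the comparison map $\chisub{\class{K}}'$ is indeed defined.

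The crux is to verify the commutativity $\chisub{\class{K}}'=\fnt U_{\CA,\cat C}(\rho)\circ\chisub{\class{K}}$. This is a short universal-property check: both maps are $\cat C$-morphisms with common domain $\coprod_{\cat C}\fnt U_{\CA',\cat C}(\class{K})$, and pre-composing either side with the $\cat C$-coproduct inclusion $\varepsilon_{\B}$ yields $\fnt U_{\CA',\cat C}(\epsilon'_{\B})=\fnt U_{\CA,\cat C}(\rho\circ\epsilon_{\B})$, using the defining equation~\eqref{Eq:DefChi} for $\chisub{\class{K}}$ together with $\epsilon'_{\B}=\rho\circ\epsilon_{\B}$. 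Once commutativity is in hand, the conclusion is automatic: $\chisub{\class{K}}$ is surjective by hypothesis and $\fnt U_{\CA,\cat C}(\rho)$ is surjective because $\rho$ is, so the composite $\chisub{\class{K}}'$ is surjective. The only genuine subtlety is the bookkeeping identification of the $\cat C$-objects attached to $\CA$ and to $\CA'$; once that is pinned down, no further obstacle remains. (A purely syntactic alternative is available via Theorem~\ref{Theo:GeneralS}(3), whose term identity is visibly inherited by every subclass of $\CA$, but the diagram route exposes the underlying categorical mechanism.)
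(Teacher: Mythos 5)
Your proof is correct and follows essentially the same route as the paper's: it invokes Proposition~\ref{Prop:Reflect} to obtain the quotient map $\rho$, identifies the two $\cat C$-coproducts of reducts, and reads off surjectivity of $\chisub{\class{K}}'$ from the commutativity of the diagram in Fig.~\ref{Fig:Chis}. The only difference is that you spell out the universal-property verification of commutativity (and note the syntactic alternative via Theorem~\ref{Theo:GeneralS}(3)), which the paper leaves implicit.
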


\begin{proof}
For the non-trivial implication, let $\CA'$ be a prevariety contained in $\CA$. Since $\CA$ is $\cat C$-based,  $\CA'$ is also $\cat C$-based, and we can consider
the functor 
$\fnt U_{\CA',\cat C}\colon\CA\to \cat C$.
Let $\class{K}\subseteq \CA'$ be a set of algebras. We will use $'$ to distinguish  $\chisub{\class{K}}\colon\coprod_{\cat C}\fnt U_{\CA,\cat C}(\class{K})\to \fnt U_{\CA,\cat C}(\coprod_{\CA}\class{K})$ from $\chisub{\class{K}}'\colon\coprod_{\cat C}\fnt U_{\CA',\cat C}(\class{K})\to \fnt U_{\CA',\cat C}(\coprod_{\CA'}\class{K})$. By Proposition~\ref{Prop:Reflect} we have 
 $\coprod_{\CA'}\class{K}\cong(\coprod_{\CA}\class{K})/(\theta_{\CA'}(\coprod_{\CA}\class{K}))$. Let $\rho\colon\coprod_{\CA}\class{K}\to \coprod_{\CA'}\class{K}$ denote the quotient map. Since $\fnt U_{\CA',\cat C}(\coprod_{\CA'}\class{K})= 
\fnt U_{\CA,\cat C}(\coprod_{\CA'}\class{K})$, 
the diagram in Fig.~\ref{Fig:Chis} commutes. 
Thus, if $\chisub{\class{K}}$ is surjective,  so is $\chisub{\class{K}}'$.
\end{proof}

\begin{theorem}\label{Theo:RetractCons}
Let
$\CA$ and $\cat C$ be prevarieties such that $\CA$ is $\cat C$-based. If $\cat C'$ is a prevariety such that $\cat C\subseteq \cat C'$, then the following statements hold:
\begin{newlist}
\item[{\rm (i)}] if $\fnt U_{\CA,\cat C}$ satisfies {\rm (S)} then $\fnt U_{\CA,\cat C'}$ satisfies {\rm (S)};
\item[{\rm (ii)}] if $\fnt U_{\CA,\cat C'}$ satisfies {\rm (E)} then $\fnt U_{\CA,\cat C}$ satisfies {\rm (E)}.
\end{newlist}
\end{theorem}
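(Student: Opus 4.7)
The plan is to reduce both parts to a single factorisation of the comparison maps. Write $\chisub{\class{K}}\colon\coprod_{\cat C}\fnt U_{\CA,\cat C}(\class{K})\to \fnt U_{\CA,\cat C}(\coprod_{\CA}\class{K})$ and $\chisub{\class{K}}'\colon\coprod_{\cat C'}\fnt U_{\CA,\cat C'}(\class{K})\to \fnt U_{\CA,\cat C'}(\coprod_{\CA}\class{K})$, as in \eqref{Eq:DefChi}. The key observation is that $\cat C\subseteq\cat C'$ forces the two prevarieties to share a common language, so $\CA$ being $\cat C$-based automatically makes it $\cat C'$-based, and for each $\A\in\CA$ the algebras $\fnt U_{\CA,\cat C}(\A)$ and $\fnt U_{\CA,\cat C'}(\A)$ have the same underlying set and coincide as term-reducts; the only difference is the ambient category.

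The central step is to apply Proposition~\ref{Prop:Reflect} with the roles $\cat Q=\cat C'$ and $\cat Q'=\cat C$ to the family $\fnt U_{\CA,\cat C'}(\class K)$ of $\cat C'$-algebras. This produces a surjective quotient map
$$
\rho_{\class K}\colon \textstyle\coprod_{\cat C'}\fnt U_{\CA,\cat C'}(\class K) \twoheadrightarrow \coprod_{\cat C}\fnt U_{\CA,\cat C}(\class K)
$$
which (composed with the universal co-cone $\varepsilon'_\B$ of $\coprod_{\cat C'}$) realises the universal co-cone $\varepsilon_\B$ of $\coprod_{\cat C}$. I would then verify the factorisation
$$
\chisub{\class K}'=\chisub{\class K}\circ\rho_{\class K}
$$
by appealing to the uniqueness in the universal property of $\coprod_{\cat C'}\fnt U_{\CA,\cat C'}(\class K)$: both sides are $\cat C'$-morphisms (note that $\chisub{\class K}$ is a $\cat C$-morphism, hence also a $\cat C'$-morphism because $\cat C\subseteq\cat C'$) and both satisfy $\bigl(\chisub{\class K}\circ\rho_{\class K}\bigr)\circ\varepsilon'_\B=\chisub{\class K}\circ\varepsilon_\B=\fnt U_{\CA,\cat C'}(\epsilon_\B)=\chisub{\class K}'\circ\varepsilon'_\B$ for every $\B\in\class K$.

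With the factorisation in hand, the two assertions are immediate. For (i), if each $\chisub{\class K}$ is surjective, then so is the composite $\chisub{\class K}\circ\rho_{\class K}=\chisub{\class K}'$, since $\rho_{\class K}$ is surjective. For (ii), if $\chisub{\class K}'=\chisub{\class K}\circ\rho_{\class K}$ is injective, then $\chisub{\class K}$ is injective on $\mathrm{Im}\,\rho_{\class K}$; but $\rho_{\class K}$ is surjective, so $\mathrm{Im}\,\rho_{\class K}$ is all of $\coprod_{\cat C}\fnt U_{\CA,\cat C}(\class K)$, and $\chisub{\class K}$ is therefore injective.

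The only real subtlety I foresee lies in the bookkeeping needed to justify the factorisation: one must track carefully that $\rho_{\class K}$ is genuinely a $\cat C'$-morphism (its codomain, living in $\cat C\subseteq\cat C'$, is also a $\cat C'$-object, so this is routine) and that the set-theoretic identifications of $\fnt U_{\CA,\cat C}(\epsilon_\B)$ with $\fnt U_{\CA,\cat C'}(\epsilon_\B)$ are compatible with the two universal co-cones. Once this is settled, no further effort is required.
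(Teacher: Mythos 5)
Your proposal is correct and follows essentially the same route as the paper: both apply Proposition~\ref{Prop:Reflect} (with $\cat Q=\cat C'$, $\cat Q'=\cat C$) to obtain the surjective quotient $\rho_{\class K}$, establish the factorisation $\chisub{\class K}'=\chisub{\class K}\circ\rho_{\class K}$ via the commuting square of Fig.~\ref{Fig:Chis2}, and then read off (i) from surjectivity of the composite and (ii) from injectivity of the composite forcing injectivity of $\chisub{\class K}$. Your explicit appeal to the uniqueness clause of the coproduct's universal property simply spells out what the paper leaves as "the diagram commutes".
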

\begin{proof}
Let $\class{K}\subseteq \CA$ be a set of algebras. Now we will use $'$ to distinguish $\chisub{\class{K}}\colon\coprod_{\cat C}\fnt U_{\CA,\cat C}(\class{K})\to \fnt U_{\CA,\cat C}(\coprod_{\CA}\class{K})$ from $\chisub{\class{K}}'\colon\coprod_{\cat C'}\fnt U_{\CA,\cat C'}(\class{K})\to \fnt U_{\CA,\cat C'}(\coprod_{\CA}\class{K})$. By Proposition~\ref{Prop:Reflect},   
$$\textstyle\coprod_{\cat C}\fnt U_{\CA,\cat C}(\class{K})\cong(\coprod_{\cat C'}\fnt U_{\CA,\cat C'}(\class{K}))/(\theta_{\cat C}(\coprod_{\cat C'}\fnt U_{\CA,\cat C'}(\class{K})).
$$
 Let $\rho\colon\coprod_{\cat C'}\fnt U_{\CA,\cat C'}(\class{K})\to \coprod_{\cat C}\fnt U_{\CA,\cat C}(\class{K})$ denote the quotient map. Then the diagram in Fig.~\ref{Fig:Chis2} commutes. 
Consequently, 
 on the one hand,  if $\chisub{\class{K}}$ is surjective, then $\chisub{\class{K}}'$ is surjective, which proves (i). On the other hand, if $\chisub{\class{K}}'$ is injective, then $\rho$ is injective, and therefore an isomorphism. This implies that $\chisub{\class{K}}$  is injective, proving (ii).
\end{proof}
\begin{figure} [ht]
\begin{center}
\begin{tikzpicture} 
[auto,
 text depth=0.25ex,
 move up/.style=   {transform canvas={yshift=2.5pt}},
 move down/.style= {transform canvas={yshift=-2pt}},
 move left/.style= {transform canvas={xshift=-2.5pt}},
 move right/.style={transform canvas={xshift=2.5pt}}] 
\matrix[row sep= .9cm, column sep= .9cm]
{ \node (CUK) {$\coprod_{\cat C}\fnt U_{\CA,\cat C}(\class{K})$ };  
& \node (UCK) {$U_{\CA,\cat C}(\coprod_{\CA}\class{K})$};\\
\node (CUK1) {$\coprod_{\cat C'}\fnt U_{\CA,\cat C'}(\class{K})$ };  
& \node (UCK1) {$U_{\CA,\cat C'}(\coprod_{\CA}\class{K})$};\\
 };
\draw [->] (CUK) to node  {$\chisub{\class{K}}$} (UCK);
\draw [->] (CUK1) to node  [swap]  {$\chisub{\class{K}}'$} (UCK1);
\draw [->>] (CUK1) to node 
{$\rho$} (CUK);
\draw [<->] (UCK) to node {${\rm Id}$} (UCK1);
\end{tikzpicture}
\end{center}\caption{The proof of Theorem~\ref{Theo:RetractCons}}\label{Fig:Chis2}
\end{figure}

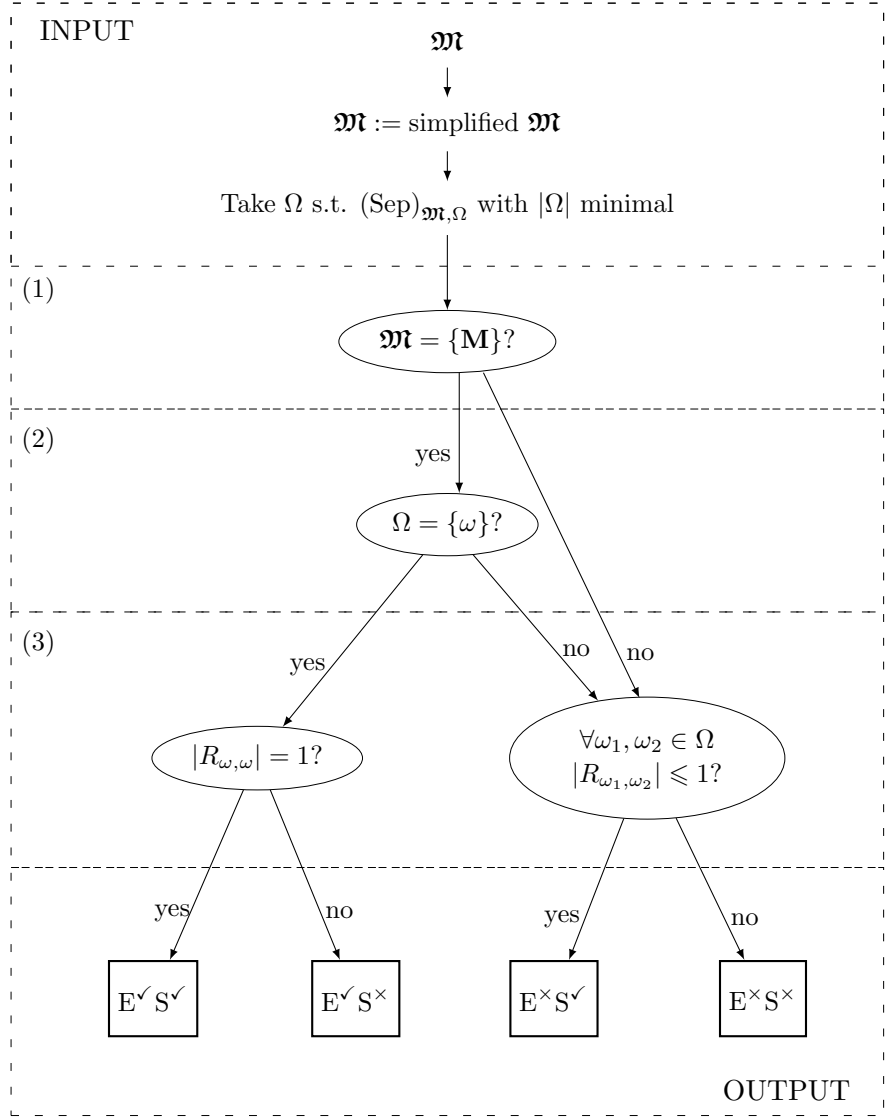
\begin{figure} [ht]
\begin{center}
￼￼\begin{tikzpicture}
[move up/.style=   {transform canvas={yshift=5pt}},
 move down/.style= {transform canvas={yshift=-5pt}},
 move left/.style= {transform canvas={xshift=-5pt}},
move wayright/.style = {transform canvas = {xshift =8pt}},
 move right/.style={transform canvas={xshift=4.5pt}}, 
move righta/.style ={transform canvas={xshift=5.2pt}}, 
 move uup/.style=   {transform canvas={yshift=8pt}},
 move ddown/.style= {transform canvas={yshift=-12pt}},
 move dddown/.style= {transform canvas={yshift=-15pt}}, 
move lleft/.style= {transform canvas={xshift=-13pt}},
 move rright/.style={transform canvas={xshift=13pt}}, 
start chain, node distance=4mm,
conditionbox/.style={
shape=rectangle,
minimum size=10mm,
 thick,
draw=black,
},
infobox/.style={
shape=rectangle,
minimum size=7mm,
},
yesno/.style={
shape=ellipse, 
minimum size=0mm,
very  thin,
draw=black
},
]
[move wayright];
\node (CM) [on chain,infobox] {$\CM$};

\node (CMsimp) [on chain,infobox,below= of CM] {$ \CM:= \text{simplified } \CM$};
\node (Sep) [on chain,infobox, below= of CMsimp] 
{
Take  $\Omega$ s.t.
 $ \text{(Sep)}_{\CM,\Omega}$ with $|\Omega| $ minimal
};  
\begin{scope} [node distance=10mm, start branch=godown]
\node (QoneM) [on chain,yesno,below =of Sep] {$ \CM = \{ \M\}$?};
\begin{scope} [node distance =50mm, start branch =RQ1]
\node (blank3) [on chain= going below] {};
\begin{scope} [node distance=10mm, start branch= RQleft]
\node (QRa) 
[yesno, on chain=going left] { $ |R_{\w,\w}|=1$?}; 
\end{scope}
\end{scope}

\begin{scope} [node distance=50mm, start branch= RQ2]
\node (blank4) [on chain=going below] {};
\begin{scope} [node distance =7mm, start branch =RQright]
\node (QRb) [yesno, on chain=going right] { \begin{tabular}{c}$  
\smash \forall \w_1,\w_2 \in \Omega$\\
$\smash |R_{\w_1,\w_2}|\leq 1$?
\end{tabular}};
\end{scope}
\end{scope}
\node (blank1) [node distance=12mm, below = of QoneM] {};

\begin{scope} [node distance = 16mm, start branch=godownw]
\node (Qoneomega) [yesno,on chain,below=of QoneM] {
$\Omega=\{ \w\}$?
};
\end{scope}
\begin{scope} [node distance=82mm, start branch=forconds12]
\node (blankl) [on chain=going below] {};
\begin{scope} [node distance =5mm, start branch=conds12]
\node (newcond2) [conditionbox, on chain=going left] {\eysn};
\begin{scope} [node distance = 15mm,start branch = forconds12l]
\node (newcond1) [conditionbox, on chain=going left]
{\eysy};
\begin{scope} [node distance = 8mm,start branch =forsecs]
\node (junk) [on chain = going left] {};
\begin{scope} [node distance = 4mm,start branch = forinput]
\node (junk2) [on chain = going right] {};
\end{scope}
\end{scope}
\end{scope}

\end{scope}
\end{scope} 
\begin{scope} [node distance =82mm, start branch=forconds34]
\node (blankr) [on chain=going below] {};
\begin{scope} [node distance =7mm, start branch=conds34]
\node (newcond3) [conditionbox, on chain=going right] {\ensy};
\begin{scope} [node distance = 16mm, start branch = forcondsr]
\node (newcond4) [conditionbox, on chain=going right] {\ensn};
\end{scope}
\end{scope} 
\end{scope}
\end{scope}
\node (out) [node distance = 4.5mm, below = of newcond4] {\large \hspace*{5mm} OUTPUT};
\node (o4) [node distance = 12mm, above = of junk] {};
\node (q3) [node distance = 43mm, above = of junk] {(3)};
\node (q2) [node distance = 70mm, above = of junk] {(2)};
\node (q1) [node distance = 90mm, above = of junk] {(1)};
\node (i0) [node distance = 125mm, above = of junk2] {\large INPUT};

\draw [ultra thin, loosely dashed] (-5.8,-3) rectangle (5.8,.5);
\draw [ultra thin, loosely dashed] (5.8,.5) rectangle (-5.8,-4.9);
\draw [ultra thin, loosely dashed] (-5.8,-4.9) rectangle (5.8,-7.6);
\draw [ultra thin, loosely dashed] (5.8,-7.6) rectangle (-5.8,-11);
\draw [ultra thin, loosely dashed] (-5.8,-11) rectangle (5.8,-14.3);

%
\draw [-latex] (CM) to (CMsimp);
\draw [-latex] (CMsimp) to (Sep);
\draw [-latex] (Sep) to (QoneM);
\draw [-latex,move right] (QoneM) to  node [xshift=-3.5mm,yshift=-5mm,move up] {yes} (Qoneomega);

\draw [-latex] (QRa) to node [move lleft,yshift=-5mm] {yes} (newcond1);
\draw [-latex] (QRa) to node [move rright,yshift=-5mm] {no} (newcond2);
\draw [-latex] (QRb) to   node [move lleft,yshift=-4mm] {yes} (newcond3);
\draw [-latex] (QRb) to node [move rright,yshift=-4mm] {no}  (newcond4); 
\draw [-latex] (Qoneomega) to node [xshift=5.4mm, yshift=-3mm,
swap] {no} (QRb);

\draw [-latex,move wayright] (QoneM) to  node [xshift=10.5mm, yshift=-44.9pt,swap] {no} (QRb);
\draw [-latex] (Qoneomega) to node [yshift=-3.5mm,xshift=-6mm, swap] {yes} (QRa); 

\end{tikzpicture}
\end{center}
\caption{Flowchart for testing conditions (E) and (S)} \label{flowchart}
\end{figure}

\begin{table}[ht]
\begin{center}
\begin{tabular}[t]{|c|p{9.6cm}|}
\hline
&   \\[-.3cm]
 Property  &\hfil Variety \hfill     \\[.15cm]
\hline 
& \\[-.3cm]
 \eysy & Boolean Algebras\newline
De Morgan algebras \newline
$n$-valued pre-\L ukasiewicz--Moisil algebras\newline
$n$-valued pre-Moisil algebras \newline
Stone algebras\\[.15cm]
\hline 
&  \\[-.3cm]
 \eysn & Heyting algebra varieties of the form $\cat{G}_n$  ($n \geq 3$)\newline
Pseudocomplemented lattice varieties $\cat B_n$ ($n\geq 2$)\newline
$Q$-lattice varieties $\cat{D}_{p0}$ and $\cat{D}_{q1}$ ($p\geq 1$ and $q\geq 0$)
\\[.15cm]
\hline 
&  \\[-.3cm]
 \ensy & Kleene algebras\\ & MV-algebra varieties containing no $\textbf{\L}_{p\cdot q}$  ($p,q$  distinct
primes)\newline
$n$-valued \L ukasiewicz--Moisil algebras ($n\geq 2$)\newline
$n$-valued Moisil algebras ($n\geq 2$)\\[.15cm]
\hline 
&  \\[-.3cm] 
\ensn & 
Non-singly generated varieties of Heyting algebras
 \newline 
 MV-algebra  varieties containing some $\textbf{\L }_{pq}$ ($p,q$ 
 distinct primes)
\newline
$Q$-lattice varieties $\cat{D}_{pq}$ ($ q\geq 2$)
 \\[.15cm]
\hline 
\end{tabular}
\medskip
\end{center}\caption{A medley of examples} 
\label{table:classify}
\end{table}

\begin{table}[ht]
\begin{center}
\begin{tabular}[t]{|c|p{9.6cm}|}
\hline
&   \\[-.3cm]
 Property  &\hfil Strategy for  describing coproducts \hfill     \\[.15cm]
\hline 
& \\[-.3cm]
 \eysy &
Simple
piggyback (=\DP-based) duality available. \\[.15cm]
\hline 
&  \\ [-.3cm]
\eysn  &
{ 
No enveloping quasivariety, but there 
is a single-sorted
piggyback duality
 which may lead to a 
description of coproducts} \\[.15cm]
\hline 
&  \\[-.3cm]
\ensy &
{\tt if} 
we have  an   enveloping  quasivariety $\cat B$
with property \eysy 
 \newline
\hspace*{.5cm} {\tt then}    
combine simple piggyback duality (=\DP-based\\
& \hspace*{1.35cm} duality) 
for $\cat B$  with Prop.~\ref{Prop:Reflect}  
\newline
\hspace*{1cm}  {\tt else} 
seek a two-way translation between 
piggyback \newline 
\hspace*{1.15cm} $\phantom{else}$ 
and
\DP-based
 dualities \\[.15cm]
 \hline 
&  \\[-.3cm]
\ensn &
{\tt if}
we have  an enveloping quasivariety $\cat B$ with property \eysn
 \newline 
\hspace*{.5cm} {\tt then}  
combine 
single-sorted piggyback duality for $\cat B$\\
& \hspace*{1.35cm}  with  Proposition~\ref{Prop:Reflect}  
\newline
\hspace*{1cm} {\tt else} 
seek a two-way translation  between 
piggyback\newline
\hspace*{1.15cm} $\phantom{else}$  and~\DP-based  dualities \\[.15cm]
\hline
\end{tabular}
\end{center}
\begin{alignat*}{2}
& \qquad\text{E}^{\text{\checkmark}}\!{/}\,\text{E}^\times: \quad &&
\text{see Theorem~\ref{Theo:OntoCop}}\hspace*{6cm} \\
& \qquad\text{S}^{\text{\checkmark}}\!{/}\,\text{S}^\times: \quad 
&&\text{see Theorem~\ref{Theo:EmbeddingCoprod}}   
\end{alignat*}
\caption{Obtaining descriptions of  coproducts, by
type} 
\label{Tab:ProcedureByType}
\end{table}

\section{Applications}\label{Sec_Ex}
Finitely generated quasivarieties $\CA$ of $\CCD$-based algebras can be classified into  four  types  according to whether the forgetful functor $\fnt U \colon {\CA} \to \CCD$  
 satisfies or fails to satisfy 
 the conditions (S) and (E).  
Theorems~\ref{Theo:EmbeddingCoprod} and~\ref{Theo:OntoCop} suggest a strategy for analysing the properties of $\fnt U$. 
 This is depicted in the flowchart in Fig.~\ref{flowchart}.
The flowchart comes in three parts:  an input section in which 
we assemble the relevant information about the class~$\CA$ to be considered; a series of questions; and an output section in which 
the class $\CA$ is classified according to the answers to the questions.  
At the outset, we assume that, or arrange that,  $\CA$ is expressed in `simplified' form,
that is, that it is presented as ${\CA = \ISP(\CM)}$, where 
${\CM \subseteq \text{Si}(\CA)}$ 
 and $\CM$ has  minimal cardinality  
(see the remarks following Theorem~\ref{MainTheo}).  
 The first question posed---whether ${|\CM|=1}$---could be bypassed.  Still it is useful to know the answer:  
 $|\CM| =1$ is exactly the condition
 for $\CA$ to admit a single-sorted 
natural duality 
(Theorem~\ref{Theo:CoproToNatDual}(i)), and this property is advantageous; moreover,  $|\CM| =1$ is also the condition for 
$\CA$ to admit free products (Theorem~\ref{Theo:FreeProd}).  
In the output section of the 
flowchart,  and in Table~\ref{Tab:ProcedureByType} below,  we adopt an abbreviated notation for results: $\eysn$ indicates that condition (E) holds and condition 
(S) fails, and so on.  

In this section we will present 
 examples of varieties for the four possible combinations of properties. 
We summarise our examples in Table~\ref{table:classify};   
definitions of the listed
classes,
all of which are $\CCD$-based varieties,
  are 
recalled below.
In the table,
we leave it tacit that the quasivarieties 
 are of the form 
$\ISP(\CM)$, where $\CM$ is a finite set of finite algebras.  When we
refer to a quasivariety being 
\defn{singly generated},
we mean that $\CM$ contains a single algebra. 
Our catalogue of examples is by no means exhaustive 
and 
there are many other classes of algebras we could  
equally well have used to illustrate our methods.

For each of our  selected examples, we shall answer the questions in each section of the flowchart, 
giving references for known results and including proofs only when we
 could not find these in the literature.
The output section  of Fig.~\ref{flowchart}  will then tell us the properties of  $\fnt U$
for each variety $\CA$.
Once we have these  properties to hand in a given case,
we want to proceed to describe coproducts in the variety 
concerned.  
When the algebraic structure is completely determined by the lattice reduct, as in the case of Heyting algebras or pseudocomplemented lattices,  the description of the behaviour of $\fnt U$  on coproducts, completely determines  coproducts in the corresponding class. Even when this is not the case, we 
may still be able to describe coproducts.
 Table~\ref{Tab:ProcedureByType} indicates some  general strategies 
(motivated by Theorem~\ref{Theo:CoproToNatDual})
for achieving  this. 

In case coproducts are  preserved,  
 Theorem \ref{Theo:CoproToNatDual}(iv) tells us that we are dealing with a quasivariety for which a simple 
piggyback duality is available and, moreover, this duality can  equivalently
be viewed as a 
\DP-based duality.
When $\fnt U_{\CA}$ 
does not preserve coproducts, the  primary
idea is to  use the results in Section~\ref{Sec:CopSub} to find a quasivariety $\cat{B}$ containing $\CA$ such that 
$\fnt U_{\cat{B}}$ has better properties than $\fnt U_{\CA}$ (here we annotate the functor to indicate the class to which it refers).
Such 
 an \defn{enveloping quasivariety for $\CA$} is not in general unique
and 
which  choice  we make will 
depend on how much information 
we have about coproducts in the possible quasivarieties $\cat B$. 
Depending on the properties of coproducts of this enveloping quasivariety $\cat B$ that the class $\CA$ does not have, we have
simpler natural dualities for $\cat B$ than  for $\CA$; 
see Theorem~\ref{Theo:CoproToNatDual}.
 Again we will 
cite earlier literature where appropriate.

We now discuss in turn the varieties listed in Table~\ref{table:classify}.  For complete clarity we shall write $\fnt U_{\CA}$, for each given
choice of $\CA$, rather than~$\fnt U$.

\subsection*{De Morgan Algebras, $\cat{DM}$}\

An algebra $\A=(A,\wedge,\vee,\neg,0,1)$ 
is a \defn{ De Morgan algebra }
 if its lattice reduct $(A,\wedge,\vee,0,1)$ is in $\CCD$  and  $\neg$ is a unary operation satisfying the equations $x\approx\neg\neg x$ and $\neg(x \wedge y)\approx\neg x \vee \neg y$ and $\neg 0 \approx 1$. 
Then $\cat{DM} = \ISP(\mathbf{4})$, where 
$\mathbf{4}=(\{0,a,b,1\},\wedge,\vee,\neg,0,1)$ denotes the four-element De Morgan  algebra with two $\neg$-fixpoints.
Let $\eta$ be the automorphism of $\mathbf{4}$ which interchanges
$a$ and~$b$ and let $\w\colon \fnt U_{\cat{DM}}(\mathbf{4})\to \two$ be defined by  $\w(a)=\w(1)=1$ and $\w(b)=\w(0)=0$. Then $ \text{(Sep)}_{\mathbf{4},\w}$ is satisfied. The answers to (1) and (2) in Fig.~\ref{flowchart} are yes. Note that Lemma~\ref{unique-max}  is applicable here, so the answer to (3) is also yes. Therefore, the functor $\fnt U_{\cat{DM}}$ preserves coproducts. 

By Theorem~\ref{genpig}, the structure   
$\MT = \langle \,\{0,a,b,1\}, r, e, \Tp\, \rangle $,  
where
$$r=\{\,(0,0),(0,a),(a,a),(b,0),(b,a),(b,b),(b,1),(1,a),(1,1)\,\}, 
$$
yields a natural duality for~$\cat{DM}$ and coproducts in~$\cat{DM}$ correspond to cartesian products in the dual category. 
The duality and the description of coproducts
stemming from it 
 were already developed in~\cite{CF77}.  There the duality was introduced as a  \DP-based 
 one.  In \cite[Section~3.15]{CD98} this same duality
is presented from the natural duality perspective, in accord with 
Corollary~\ref{Cor-to-RevEng}. 

\subsection*{Kleene algebras, $\cat{K}$ }\

An algebra $\A$ is  
a \defn{Kleene algebra} if it is a 
De~Morgan algebra satisfying the Kleene condition 
$x \wedge \neg x \leq y \vee \neg y$. We have  $\cat{K}=\ISP(\mathbf{3})$
where~$\mathbf{3}$ denotes the subalgebra $(\{0,a,1\},\wedge,\vee,\neg,0,1)$ of $\mathbf{4}$ 
determined by $\{0,a,1\}$. 
Then the answer to (1) is yes. 
The only endomorphism of $\mathbf{3}$ is the identity. 
Thus the only~$\Omega$ that satisfies $\text{(Sep)}_{\mathbf{3},\Omega}$ is $\Omega=\fnt{HU}_{\cat{K}}(\mathbf{3})=\{\w_1,\w_2\}$, where $\w_1,\w_2\colon\fnt U_{\cat{K}}(\mathbf{3})\to\two$ are defined by $\w_1(a)=\w_1(1)=1$, $\w_1(0)=0$ and $\w_2(1)=1$, $\w_2(a)=\w_2(0)=0$. Therefore the answer to (2) is no. 
By Lemma~\ref{unique-max}, for each pair $\w_i,\w_j$
($i,j = 1,2$), we have $|R_{\w_i,\w_j}|=1$, 
so 
the answer to (3) is yes. 
Hence 
Kleene algebras admit free products and $\fnt U_{\cat{K}}$ has the property (S) but not property~(E).

Guided by  Table~\ref{Tab:ProcedureByType}, to  describe coproducts in $\cat{K}$  fully we seek 
 an enveloping variety
 $\cat{B}$ such that $\fnt U_{\cat{B}}$ preserves coproducts and then  determine the retraction~$\fnt R_{\cat{K}}$. 
This is exactly the procedure followed  in  \cite{Ci79} and
in \cite{CF79} to describe coproducts in $\cat{K}$ using $\cat{DM}$ as enveloping variety.
An alternative proof can be obtained using Lemma~\ref{jointsurj},  Theorem~\ref{Theo:EmbeddingCoprod} and 
Corollary~\ref{Cor:Range}. 
Theorem~\ref{genpig}  can then be used to prove that  
$
(\,\{0,a,1\}\overset{.}{\cup} \{0,a,1\};\{R_{\w_i,\w_j}\mid i,j \in\{ 1,2\}\},\Tp\, )
$ 
yields a multisorted natural duality on $\cat{K}$.
(See  \cite{DP87}, where this piggyback duality for Kleene algebras was first developed.)

\subsection* {Pseudocomplemented distributive lattices}\

We consider the countable chain of non-trivial finitely generated subvarieties ~$\cat {B}_n$ of the variety 
 $\cat{B}_\omega$  
of pseudocomplemented distributive lattices.  An 
algebra  $\A=(A,\wedge,\vee,^*,0,1)$  is in $\cat {B}_{\w}$ if
 $(A,\wedge,\vee,0,1)\in \CCD$  and  $^*$ is a unary operation satisfying
 $x\wedge y\approx 0\mbox{ if and only if } x\leq y^*$.
For $0 \leq n < \omega$ the variety $\cat B_n $ is expressible as  $\ISP(\str{B}_n)$
where $\str{B}_{n}$  has as underlying lattice the Boolean lattice with $n$ atoms with a new top element  adjoined; here
 $\cat B_0$ and $\cat B_1$ correspond  to the varieties of Boolean algebras and of Stone algebras,
respectively. 
(See for example \cite{BaDw} for details.)

Piggyback dualities dualities for the classes $\cat B_n$ were studied in \cite{DP93}, building on earlier work  in \cite{DWpig}.
We already noted that the answer to our question (1) is yes for each $\cat B_n$. 
In  \cite[p.~48]{DP93} 
it is observed that for each $n$ there exists  $\w_n$ such that $\text{(Sep)}_{\B_n,\w_n}$ holds,  and so the answer to (2) is yes for each $n\geq 0$.  
In \cite[Theorem~3.6]{DP93}, it is proved that $|R_{\w_n,\w_n}|$ is equal to the number of partitions of~$n$. 
Thus the answer to (3) is yes if and only if  $n\leq 1$. 
In summary,  $\fnt U_{\cat{B}_{n}}$, for $n \leq 1$,  preserves coproducts and  $\fnt U_{\cat{B}_{n}}$,   for $n\geq 2$,  satisfies condition (E) but not condition (S).

\subsection*{Quasivarieties of Heyting algebras generated by finite chains}\

We recall that
an algebra $\A=(A,\wedge,\vee,\to,0,1)$ is a Heyting algebra if 
$(A,\wedge,\vee,0,1) \in \CCD$ 
and $\to$ is a binary operation satisfying 
$x\wedge y\leq z$ if and only if $x\leq y\to z$.
The implication~$\to$ is uniquely determined by the lattice structure. 
Therefore determining the properties of~$\fnt U$ with respect to coproducts suffices for a full understanding of  coproducts in quasivarieties of Heyting algebras.
 
For each $n\in\{2,3,\ldots\}$, let 
$
\str{C}_n
$ 
denote the $n$-element Heyting chain. 
and let $\cat{G}_n=\ISP(\str{C}_n)$.
The algebras in $\cat{G}_n$ are known as \defn{$n$-valued G\"odel algebras}, 
since they form the algebraic counterpart of the $n$-valued G\"odel logic (see \cite{G86,H98}). It is easy to see that $\cat{G}_2$ is term-equivalent to the variety of Boolean algebras. So we 
 restrict to 
the case $n\geq 3$. 
For each $n$ the condition
$\text{(Sep)}_{\str{C}_n,\w_n}$ is satisfied if we take 
$\w_n\colon\fnt{U}_{\cat{G}_n}(\str{C}_n)\to\two$ to be the map 
determined by   
$\w_n^{-1}(1) = \{ 1\}$.  
 Therefore in each $\cat{G}_n$ the answer to (1) and (2)   is yes.
The algebra $\str{C}_3$ belongs to $\cat{G}_n$ and it can  easily  be checked that
 $|R_{w_3,w_3}|=\{r_1,r_2\}$, where  $r_1=\{(0,0),(d,d),(1,1)\}$ and $r_2=\{(0,0),(d,1),(1,1)\}$. 
Then Theorem~\ref{Theo:EmbeddingCoprod}(4) 
proves  that the answer to (3) is no for each $\cat{G}_n$.  
Thus,  for each $n\geq 3$, the functor $\fnt U_{\cat{G}_n}$ satisfies (E) but not (S).  

Coproducts can be described  
completely by using
the simple piggyback dualities  for  the classes $\cat{G}_n$ derived in \cite{DWpig} (or see \cite[Section~7.3]{CD98}).
 In \cite{CPXX} we undertake an in-depth study of coproducts in the classes
$\cat G_n$. 
A different approach to coproducts of finite
G\"odel algebras $\ope{V}(\{\str{C}_n\mid n\geq 1\})$ 
has been developed in \cite{DM}. 
The results in \cite{DM} combined with Proposition~\ref{Prop:Reflect}  
provide another description of coproducts for finite algebras in $\cat{G}_n$ for each~$n\geq 2$.

\subsection*{MV-algebras}\

An algebra $(A,\oplus,\neg,0)$ is an \defn{MV-algebra} if $(A,\oplus,0)$ is a 
commutative mon\-oid satisfying 
$\neg(\neg x)\approx x$,  $\neg 0\oplus x\approx\neg 0$ and
 $\neg(\neg x\oplus y)\oplus y\approx\neg(\neg y\oplus x)\oplus x$.
The variety of MV-algebras is the algebraic counterpart of \L ukasiewicz infinite-valued logic (see \cite{CDM00}).
The terms
$\neg(\neg x\oplus y)\oplus y$, $\neg(x\oplus \neg y)\oplus \neg y$, $0$ and 
$\neg 0$
 determine a bounded distributive lattice structure on any  MV-algebra $\A$.
For each finitely generated variety $\CA$ of MV-algebras there is a finite set~$\CM$ of finite MV-chains such that $\CA=\ISP(\CM)$ 
\cite[Chapter~8]{CDM00}. 
The $n$-element MV-chain is denoted by~$\textbf{\L}_{n-1}$. (We deviate here from the notation in \cite[Section 3.5]{CDM00} where the  $n$-element MV-chain is denoted by~$\textbf{\L}_{n}$. 
We do this 
 so  that $\textbf{\L}_{m}\in\ope{IS}(\textbf{\L}_{n})$ if and only if $m$ divides $n$.)
The variety $\ISP(\textbf{\L}_1)$ is term-equivalent to the variety of Boolean algebras \cite[Corollary~8.2.4]{CDM00}.  Therefore~$\fnt U_{\ISP(\textbf{\L}_1)}$ preserves coproducts. 
Now assume that $n \geq 2$.
Since the only endomorphism of
~$\textbf{\L}_n$ is the identity,
$\text{(Sep)}_{\textbf{\L}_n,\Omega}$ can be satisfied only by taking  $\Omega=\fnt{HU}_{\ISP(\textbf{\L}_n)}(\textbf{\L}_n)$. 
Then 
the answer to (1) is yes but to (2) is no. 
If $n$ is a power of a prime,  
it can be proved that $|R_{\w_1,\w_2}|=1$ for each $\w_1,\w_2\in\fnt{HU}_{\ISP(\textbf{\L}_n)}$ and in these cases the answer to (3) is yes and 
$\fnt{U}_{\ISP(\textbf{\L}_n)}$ satisfies (S) but not (E). 
In case~$n$ has at least two distinct prime divisors, 
 it can be checked that  there exist 
$\w_1,\w_2\in\fnt{HU}_{\ISP(\textbf{\L}_n)}$ such that $|R_{\w_1,\w_2}|>1$ and then 
the answer to (3) is no. Thus $\fnt{U}_{\ISP(\textbf{\L}_n)}$ 
satisfies neither  (E) nor (S).
If a quasivariety of MV-algebras is generated by a finite family of MV-chains $\{\textbf{\L}_{n_1},\ldots, \textbf{\L}_{n_m}\}$ 
with $m>1$,
then the answer to (1) and (2) is no,  and the answer to (3) is yes if and only if $n_i$ is a prime power
 for 
 $1\leq i\leq m$, that is, if no $\textbf{\L}_{p\cdot q}$ with $p$ and $q$ distinct primes 
 belongs to $\ISP(\textbf{\L}_{n_1},\ldots, \textbf{\L}_{n_m})$.

To  describe coproducts for singly-generated quasivarieties of 
MV-algebras we have available a special kind of natural duality since each $\textbf{\L}_n$ is a discriminator algebra (see \cite{Ni2001} and \cite[Section 3.12]{CD98}). 
If a quasivariety of MV-algebras is generated by $\{\textbf{\L}_{n_1},\ldots, \textbf{\L}_{n_m}\}$ then each $\textbf{\L}_{n_i}\in\ope{S}(\textbf{\L}_{k})$,
 where $k=n_1
\dots
 n_m$. 
We can then combine the duality for  $\ISP(\textbf{\L}_{k})$ and Proposition~\ref{Prop:Reflect} (see \cite{MNT2007}, where this approach was
 used to develop a duality for finitely generated quasivarieties of MV-algebras).
 Coproducts of MV-algebras have also been studied, using different tools, in  \cite{Pa} and \cite[Chapter~7]{Mu11}.

\subsection*{$Q$-lattices}\

An algebra $\A=(A,\wedge,\vee,\nabla,0,1)$ 
is a  \defn{distributive lattice with a quantifier} (\defn{$Q$-lattice} for short) 
 if $(A,\wedge,\vee,0,1)\in \CCD$  and  $\nabla$ is a unary operation satisfying the conditions:
$$\nabla 0\approx 0,\ \ x\leq\nabla x, \ \ \nabla(x\vee y)\approx\nabla x\vee\nabla y\ \mbox{ and }\ \nabla(x\wedge\nabla y)\approx \nabla x\wedge\nabla y.$$
The variety 
of $Q$-lattices was introduced
in \cite{Ci91}. There 
 it is proved 
that the lattice of subvarieties of $Q$-lattices  is a 
chain. 
Each proper non-trivial subvariety is generated by a finite algebra $\str{D}_{pq}$ where $p,q\in\{0,1,\ldots\}$, with $p=q=0$  excluded  (we refer to \cite[Section~4]{Ci91} for precise definitions). 

In \cite{Pr93}, the second author presented piggyback natural dualities for these varieties $\cat{D}_{pq}=\ope{V}(\str{D}_{pq})$. By \cite[Theorem~3.6]{Pr93}, 
 the answer to (1) and to (2) is yes if and only if $q\leq 1$ and the answer to both questions  is no otherwise. From \cite[Theorem~3.10]{Pr93}, it follows that the answer to (3) is yes only 
for 
 $\cat{D}_{10}$ and $\cat{D}_{01}$ (see also \cite[Table~1]{Pr93}). 
We deduce  
that $\fnt U_{\cat{D}_{10}}$ and $\fnt U_{\cat{D}_{01}}$ preserve coproducts, that $\fnt U_{\cat{D}_{p0}}$ and $\fnt U_{\cat{D}_{q1}}$,
for  $p\geq 2$ and $q\geq 1$, satisfy condition (E) but not condition (S),
 and that $\fnt U_{\cat{D}_{pq}}$ with $q\geq 2$
satisfies neither (E) nor (S). 
(To see 
that 
$\fnt U_{\cat{D}_{10}}$ and $\fnt U_{\cat{D}_{01}}$ 
preserve coproducts
it suffices to observe that 
$\cat{D}_{10}$ and $\cat{D}_{01}$ are term-equivalent to   $\CCD$ and  to Stone algebras, respectively.)

Coproducts of $Q$-lattices have not been studied in the literature, apart from what is included  in this subsection, and the observation made in \cite{Ci96} that coproducts of $Q$-lattices do not correspond to cartesian products in 
the \DP-based 
so coproducts are not preserved.
The natural duality developed in \cite{Pr93} can be used to describe coproducts on $\cat{D}_{pq}$, but the 
the proliferation of relations in the alter ego as $p$ and $q$ increase 
makes a naive application of this technique unfeasible when $p+q$ is large.

\subsection*{$n$-valued pre-Moisil and pre-\L ukasiewicz--Moisil algebras}\

Our terminology here largely follows that in 
\cite{BFGR}, which provides our primary source for facts about 
the classes of algebras we now consider.  
An algebra
\[
\A=(A,\wedge, \vee,0,1,\{ f_i,\overline{f}_i\mid i\in\{1,\ldots,n-1\})
\]  
is called an  \defn{$n$-valued pre-\L ukasiewicz--Moisil  algebra} (or \defn{pre-$\text{\rm LM$n$}$-algebra} for short) if it satisfies the following:
\begin{enumerate}
\item $(A,\wedge, \vee, 0,1)\in \CCD$; 
\item each $f_i$ is a $\CCD$-endomorphism;
\item $f_i(a)\wedge \overline{f}_i(a)=0$ and $f_i(a)\vee \overline{f}_i(a)=1$, for each $i\in\{1,\ldots,n-1\}$ and each $a\in A$;
\item $f_i\circ f_j=f_j$, for each $i,j\in \{1,\ldots,n-1\}$;
\item if $i\leq j$, then $f_i(a)\leq f_j(a)$ for each $a\in A$.
\end{enumerate}
 An algebra $(A,\wedge, \vee,\neg,0,1, \{ f_i\mid i\in\{1,\ldots,n\})$
  is an \defn{$n$-valued pre-Moisil algebra} (pre-$\text{\rm M}n$-algebra for short) if its reduct $(A,\wedge,\vee,\neg,0,1)$ 
is a De Morgan algebra, and  the algebra 
$(A,\wedge, \vee,0,1, \{ f_i,\neg\circ f_i\mid i\in\{1,\ldots,n-1\})$,
is a pre-$\text{\rm LM}n$-algebra.
Following 
\cite{BFGR}, we let $\LMn^0$ and  $\Mn^0$ 
  denote the classes of pre-$\text{\rm LM}n$-algebras and 
pre-$\text{\rm M}n$-algebras, respectively.

In \cite[Chapter 6]{BFGR},  
\DP-based dualities for  $\LMn^0$ and $\Mn^0$ were set up. 
In \cite[Lemmas 5.20 and 5.24]{BFGR},
 it is proved that this duality sends coproducts into cartesian products. From this we
 infer that $\fnt{U}_{\LMn^0}$ and $\fnt{U}_{\Mn^0}$ preserve coproducts. 
If it happens that $\LMn^0$ is 
 finitely generated then Theorem~\ref{MainTheo} implies that there exist an algebra $\mathbf{L}^0_n\in\LMn^0$ and a
 lattice homomorphism $\w\colon\fnt{U}_{\LMn^0}(\mathbf{L}^0_n)\to \two$, such that $\ISP(\mathbf{L}^0_n)=\LMn^0$ and  $\text{(Sep)}_{\mathbf{L}^0_n,\w}$. Similarly, if~$\Mn^0$ is 
 finitely generated, then there exist $\mathbf{M}^0_n\in\Mn^0$ and a
 lattice homomorphism $\w\colon\fnt{U}_{\Mn^0}(\mathbf{M}^0_n)\to \two$, such that $\ISP(\mathbf{M}^0_n)=\Mn^0$ and  $\text{(Sep)}_{\mathbf{M}^0_n,\w}$ holds. 
Such algebras are not, however, exhibited 
in the literature on 
 $n$-valued Moisil and \L ukasiewicz--Moisil algebras.  We shall remedy this omission.

The algebra
$$
\mathbf{L}^0_n=(\{0,1\}\times\{0,1,\ldots,n-1\}, \wedge,\vee, 0,n-1,\{e_i,\overline{e}_i\mid i\in\{1,\ldots,n-1\}\,)
$$
whose lattice order is the product order of $\{0,1\}\times\{0,1,\ldots,n-1\}$ and its operations are defined as follows:
$$e_i(j,k)=\begin{cases}
 (0,0) & \mbox{if  }k< n-i,\\
 (1,1) & \mbox{otherwise};
\end{cases}
\ \ \ 
\overline{e}_i(j,k)=\begin{cases}
 (1,1) & \mbox{if  }k< n-i,\\
 (0,0) & \mbox{otherwise},
\end{cases}
$$
is a pre-$\text{LM}n$-algebra.
Similarly, we obtain a pre-$\text{Mn}$-algebra 
$$
\mathbf{M}^0_n=(\{0,a,b,1\}\times\{0,1,\ldots,n-1\}, \wedge,\vee,\neg,0,n-1,\{f_i
\mid i\in\{1,\ldots,n-1\}\,)
$$
by equipping $ \{0,a,b,1\}\times\{0,1,\ldots,n-1\}$
with the product lattice order and operations 
$$
f_i(j,k)=\begin{cases}
 (0,0) & \mbox{if  }k<n- i,\\
 (1,1) & \mbox{otherwise},
\end{cases}
$$
and $ \neg(j,k)=(\neg j 
,n-1-k)$,
where $\neg$ on the right-hand side has its interpretation 
in the De Morgan algebra~$\mathbf{4}$. 

We will now prove that the algebras $\mathbf{L}^0_n$ and $\mathbf{M}^0_n$ satisfy 
conditions (B)(i)--(ii)  of Theorem~\ref{MainTheo}. Observe that condition (B)(iii) follows from Lemma~\ref{unique-max}.
\begin{theorem}
For each $n$, the following statements hold:
\begin{newlist}
\item[{\rm(i)}]$\ISP(\mathbf{L}^0_n)=\LMn^0$;
\item[{\rm(ii)}] if $\w_{\mathbf{L}^0_n}\colon \mathbf{L}^0_n\to\two$ is 
 given by $\w_{\mathbf{L}^0_n}(x,y)=x$, then ${\rm (Sep)}_{\mathbf{L}^0_n,\w_{\mathbf{L}^0_n}}$ holds;
\item[{\rm(iii)}]$\ISP(\mathbf{M}^0_n)=\Mn^0$;
\item[{\rm(iv)}] if $\w_{\mathbf{M}^0_n}\colon \mathbf{L}^0_n\to\two$ is given  by $$
\w_{\mathbf{L}^0_n}(x,y)=\begin{cases}
1&\mbox{if }x\in\{a,1\},\\  
0&\mbox{otherwise},
\end{cases}$$ then ${\rm (Sep)}_{\mathbf{M}^0_n,\w_{\mathbf{M}^0_n}}$ holds.
\end{newlist}
\end{theorem}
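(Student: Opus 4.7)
The plan is to prove parts (i)--(iv) by parallel arguments, treating $\mathbf{L}^0_n$ in $\LMn^0$ and $\mathbf{M}^0_n$ in $\Mn^0$ analogously, with additional care in the latter case for the De~Morgan negation.

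For part (i), the inclusion $\ISP(\mathbf{L}^0_n) \subseteq \LMn^0$ reduces to checking that $\mathbf{L}^0_n$ satisfies axioms (1)--(5) defining pre-$\text{LM}n$-algebras. This is a routine verification using the formulas for $e_i$ and $\overline{e}_i$ as threshold indicators on the second coordinate: axiom (2) follows since $k \mapsto [k \geq n-i]$ preserves $\max$ and $\min$; axiom (3) is immediate from the complementary definitions; axioms (4)--(5) follow from $e_i(0) = 0$, $e_i(1) = 1$, and monotonicity of the threshold in~$i$. For the reverse inclusion $\LMn^0 \subseteq \ISP(\mathbf{L}^0_n)$, I would combine Birkhoff's subdirect representation with the \DP-based duality for $\LMn^0$ developed in~\cite{BFGR}. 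Specifically, the duality supplies an explicit description of the subdirectly irreducible algebras in $\LMn^0$, and one verifies by inspection that each such algebra embeds into $\mathbf{L}^0_n$ as a subalgebra. Alternatively, since \cite{BFGR} already implies that $\fnt{U}_{\LMn^0}$ preserves coproducts, Theorem~\ref{MainTheo} guarantees existence of some generating algebra with the required separation property, and it remains only to confirm $\mathbf{L}^0_n$ is such an~$\M$.

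For the separation condition in~(ii), let $a = (j_1, k_1) \ne b = (j_2, k_2)$ in $\mathbf{L}^0_n$. If $j_1 \ne j_2$, the identity endomorphism separates them since $\w_{\mathbf{L}^0_n}(a) = j_1 \ne j_2 = \w_{\mathbf{L}^0_n}(b)$. If $j_1 = j_2$ but $k_1 \ne k_2$ (say $k_1 < k_2$), I define $u_i(j, k) := (h_i(k), k)$, where $h_i(k) = 1$ if $k \geq n-i$ and $0$ otherwise, choosing any $i$ in the non-empty range $n - k_2 \leq i \leq n - k_1 - 1$ (which is contained in $\{1, \ldots, n-1\}$ because $0 \leq k_1 < k_2 \leq n-1$). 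A direct computation verifies that $u_i$ is a lattice endomorphism and commutes with each $e_j$ and $\overline{e}_j$, hence is a pre-$\text{LM}n$-algebra endomorphism of $\mathbf{L}^0_n$; and $\w_{\mathbf{L}^0_n}(u_i(a)) = h_i(k_1) = 0 \ne 1 = h_i(k_2) = \w_{\mathbf{L}^0_n}(u_i(b))$ by the threshold choice.

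Parts (iii) and (iv) follow the same template, but the separation analysis in~(iv) is more delicate because of the De~Morgan negation $\neg$. Key additional endomorphisms of $\mathbf{M}^0_n$ are the swap $\tilde{\sigma}(x, y) = (\sigma(x), y)$, where $\sigma$ exchanges the two $\neg$-fixpoints $a, b$ of $\mathbf{4}$, together with a family of $\neg$-compatible collapse endomorphisms which project $\mathbf{M}^0_n$ onto small $\neg$-closed subalgebras (such as $\{0, (a, l), \text{top}\}$ when $l = n-1-l$, or paired-level analogues otherwise). A case analysis on pairs $(x_1, y_1) \ne (x_2, y_2)$, partitioned by the $\w_{\mathbf{M}^0_n}$-classes $\{a, 1\}$ and $\{0, b\}$ of the first coordinates, then handles separation: the identity separates pairs whose $\w$-classes differ, the swap handles same-class but different first-coordinate pairs, and the collapse endomorphisms handle pairs differing in the second coordinate. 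The main obstacle throughout is the reverse inclusion in (i) and (iii), which relies on the explicit characterisation of subdirectly irreducible algebras afforded by the duality of~\cite{BFGR}; a secondary obstacle is ensuring enough $\neg$-compatible endomorphisms in~(iv) to cover all pairs in a single-sorted separation argument.
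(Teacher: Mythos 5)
Your argument for (ii) is correct and is essentially the paper's: your $u_i(j,k)=(h_i(k),k)$ with $h_i(k)=[k\ge n-i]$ is, up to reindexing, the paper's endomorphism $\eta_i(j,k)=(0,k)$ for $k<i$ and $(1,k)$ otherwise, applied with $i=k_2$. However, there are two genuine gaps elsewhere. For the reverse inclusions in (i) and (iii) you defer entirely to a classification of the subdirectly irreducible algebras of $\LMn^0$ and $\Mn^0$, which you neither carry out nor cite precisely; the paper instead gives a short direct argument: for $\A\in\LMn^0$ and $a\ne b$, take a bounded-lattice homomorphism $h\colon\fnt U(\A)\to\two$ with $h(a)\ne h(b)$ and check that $\overline h(a)=(h(a),h'(a))$, where $h'(a)=\max\{\,n-i\mid h(f_i(a))=1\,\}$ (and $h'(a)=0$ when no such $i$ exists), is a pre-$\mathrm{LM}n$-homomorphism into $\mathbf{L}^0_n$. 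This yields $\LMn^0\subseteq\ISP(\mathbf{L}^0_n)$ with no appeal to subdirect irreducibles. Your proposed ``alternative'' via the Coproduct Preservation Theorem is circular: that theorem presupposes that the quasivariety is finitely generated and in any case only asserts the existence of \emph{some} generator, so ``confirming that $\mathbf{L}^0_n$ is such an $\M$'' is exactly the content of (i) and (ii), not a shortcut to it.

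The more serious defect is in (iv): the ``$\neg$-compatible collapse endomorphisms'' you invoke do not exist. Every endomorphism $u$ of $\mathbf{M}^0_n$ preserves the bounds and commutes with each $f_i$, so $u(f_i(j,k))$ is $\bot$ or $\top$ according to whether $k\ge n-i$, while $f_i(u(j,k))$ is $\bot$ or $\top$ according to whether the second coordinate of $u(j,k)$ is $\ge n-i$; since $n-i$ runs over all of $\{1,\ldots,n-1\}$, this forces $u$ to fix the second coordinate. Consequently the image of any endomorphism meets every level $\{(x,k)\mid x\in\{0,a,b,1\}\}$ and cannot be a three-element subalgebra such as $\{0,(a,l),\top\}$ once $n\ge 4$. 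The maps that actually do the work are of the form $(j,k)\mapsto(c(k),k)$ where $c$ is a monotone step function into one of the chains $\{0<a<1\}$ or $\{0<b<1\}$ satisfying the $\neg$-compatibility condition $c(n-1-k)=\neg c(k)$, supplemented by the $\{0,1\}$-valued threshold at $n/2$ when $n$ is even; verifying that the resulting thresholds of $\w_{\mathbf{M}^0_n}\circ c$ cover every cut of $\{0,\ldots,n-1\}$ is precisely the delicate point of (iv), and your sketch does not supply it.
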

\begin{proof}
To prove (i) let $\A=(A,\wedge, \vee,0,1,\{f_i,\overline{f}_i\mid i\in\{1,\ldots,n-1\})\in\LMn^0$ and let 
$a,b\in A$ be such that $a\neq b$. There is a lattice homomorphism $h\colon \fnt U_{\LMn^0}(\A)\to \two$ such that $h(a)\neq h(b)$. Let $h'\colon  A\to\{1,\ldots,n-1\}$ be defined by
$$
h'(a)=
\begin{cases}
0&\mbox{if $h(f_i(a))=0$ for } 1 \leq i \leq n-1,\\
\max\{n-i\mid h(f_i(a))=1\}&\mbox{otherwise}.
\end{cases}
$$
It follows that the map $\overline{h}\colon\A\to\mathbf{K}^0_n$ defined by ${\overline{h}(a)=(h(a),h'(a))}$ is a homomorphism of pre-LM$n$-algebras. Since $\overline{h}(a)\neq\overline{h}(b)$ we have that $\A\in\ISP(\mathbf{L}^0_n)$. This concludes the proof of (i).

The proof of (iii) follows by a similar argument but using 
the fact that 
if $\A\in\Mn^0$ and $a,b\in\A$ are such that $a\neq b$, there exists a homomorphism $h$ of De Morgan algebras from $\A$ to $\mathbf{4}$ such that $h(a)\neq h(b)$.

The proof of (ii) follows from the observation that for each $i\in\{1,\ldots,n-1\}$ the maps  $\eta_i\colon \mathbf{L}^0_n\to \mathbf{L}^0_n$ defined by
$$
\eta_i(j,k)=\begin{cases}
(0,k)&\mbox{if }k<i ,\\
(1,k)&\mbox{otherwise}
\end{cases}
$$
are endomorphisms of $\mathbf{L}^0_n$. 
If 
$(j,k)\neq(j',k')$,  then $j\neq j'$ or $k\neq k'$. If $j\neq j'$, then $\w_{\mathbf{L}^0_n}(j,k)\neq\w_{\mathbf{L}^0_n}(j',k')$. If  $k\neq k'$, we may assume without loss of generality that $k< k'$. Then $\w_{\mathbf{L}^0_n}(\eta_{k'}(j,k))\neq\w_{\mathbf{L}^0_n}(\eta_{k'}(j',k'))$.

The proof of (iv) follows similar lines. 
\end{proof}
 Here we have 
an example in which  we use the flowchart in Fig.~\ref{flowchart} in the reverse direction from hitherto.  That is, knowing that the forgetful functor into bounded distributive lattices preserves coproducts, the 
preservation
theorem ensures the existence of a finite algebra that generates the quasivariety and has special properties.

\subsection*{Moisil and \L ukasiewicz--Moisil algebras}\

An algebra $\A=(A,\wedge, \vee,0,1,\{ f_i,\overline{f}_i\mid i\in\{1,\ldots,n\})$  
is called an  \defn{ $n$-valued \L ukasiewicz--Moisil  algebra} if it satisfies the following:
\begin{enumerate}
\item $\A$  is an $n$-valued pre-\L ukasiewicz--Moisil  algebra;
\item if  $f_i(a)= f_i(b)$ for each $i\in\{1,\dots,n\}$, then $a=b$, for each $a,b\in A$.
\end{enumerate}
Similarly, an algebra $\A=(A,\wedge, \vee,\neg,0,1,\{ f_i\mid i\in\{1,\ldots,n\})$  
is called an  \defn{ $n$-valued Moisil  algebra} if it is an pre-M$n$-algebra and satisfies (2).

Let $\LMn$ and $\Mn$ denote the classes of  $n$-valued \L ukasie\-wicz--Moisil and Moisil algebras, respectively.
Let
\begin{align*}
\mathbf{L}_n&=(\{0,1,\ldots,n-1\}, \min,\max,0,n, \{d_i,\overline{d}_i\mid i\in\{1,\ldots,n-1\}\}\,)
\intertext{and }
\mathbf{M}_n&=(\{0,1,\ldots,n-1\}, \min,\max,\neg,0,n,\{d_i
\mid i\in\{1,\ldots,n-1\}\}\,)
\end{align*}
be the  algebras, in $\LMn$ and $\Mn$ respectively,  in which  
the operations are defined as follows:
$$d_i(j)=\begin{cases}
 0 & \mbox{if  }j<n-i,\\
 n-1 & \mbox{otherwise};
\end{cases}
\ \ \ 
\overline{d}_i(j)=\begin{cases}
 n-1 & \mbox{if  }j< n-i,\\
 0 & \mbox{otherwise}; 
\end{cases}$$
 and 
$\neg(j)=n-1-j$.
Then $\LMn=\ISP(\mathbf{L}_n)$ and $\Mn=\ISP(\mathbf{M}_n)$
\cite[Corollary~6.1.9]{BFGR}.
The answer to question  (1)  is yes. The only endomorphisms of $\mathbf{L}_n$ and $\mathbf{M}_n$ are the corresponding identity maps \cite[Theorem~6.1.6]{BFGR}. Thus
to satisfy 
$\text{(Sep)}_{\mathbf{L}_n,\Omega}$
we must take
$\Omega=\fnt{HU}_{\LMn}(\mathbf{L}_n)$. Similarly 
$\text{(Sep)}_{\mathbf{M}_n,\Omega}$ holds only if 
$\Omega=\fnt{HU}_{\Mn}(\mathbf{M}_n)$. 
Then the answer to (2) is yes if and only if $n\leq 2$.
Lemma~\ref{unique-max}  is applicable, so the answer to (3) is yes for each $n\geq 1$.

To  describe coproducts in $\LMn$ and $\Mn$ fully we can use the piggyback duality 
developed by the second author in \cite{Pr95}. 
Alternatively, 
we can apply the strategy of Table~\ref{Tab:ProcedureByType} with $\LMn^0$ and $\Mn^0$ as 
the enveloping varieties. 
 It can be proved that the dual counterparts of the retraction functors
$\fnt R_{\LMn}$ and $\fnt R_{\Mn}$ under the Priestley dualities for $\LMn^0$ and $\Mn^0$ correspond to the assignment to 
$(X,\psi_i,\Tp)$ of  
the closed subspace  
$$\textstyle
Y=\{x\in X\mid \psi_i(x)=x \mbox{ for some }i\in\{1,\ldots,n\}\,\}.
$$
Our conclusions here strengthen 
the results detailed in \cite[Theorem~7.5.9]{BFGR} where duals of coproducts of only two algebras were calculated.

Finally, we remark
 that a translation between the natural
dualities  
for $\LMn$ and $\Mn$ and the Priestley-style dualities for these classes  was provided in \cite[Theorem 3.9]{Pr95}. The process that is described 
there for obtaining 
$\fntH\fnt U_{\LMn}$  and~$\fntH\fnt U_{\Mn}$ from  the corresponding natural duals is exactly the one described in 
Theorem~\ref{Theo:RevEng} applied to
({\L}ukasiewicz--)Moisil algebras.

\end{document}